\DeclareSymbolFontAlphabet{\mathbb}{AMSb}
\DeclareSymbolFontAlphabet{\mathbbl}{bbold}
\newcommand{\st}{\;|\;}
\newcommand{\half}{\frac{1}{2}}
\DeclareMathOperator{\Aut}{Aut}
\DeclareMathOperator{\Exp}{Exp}
\DeclareMathOperator{\Op}{Op}
\DeclareMathOperator{\ord}{ord}
\DeclareMathOperator{\supp}{supp}
\DeclareMathOperator{\WF}{WF}
\newcommand{\lie}{\mathfrak}
\renewcommand{\AA}{\mathbb{A}}
\newcommand{\DD}{\mathbb{D}}
\newcommand{\KK}{\mathbb{K}}
\newcommand{\UU}{\mathbb{U}}
\newcommand{\VV}{\mathbb{V}}
\newcommand{\NN}{\mathbb{N}}
\newcommand{\RR}{\mathbb{R}}
\newcommand{\TT}{\mathbb{T}}
\newcommand{\PP}{\mathbb{P}}
\newcommand{\QQ}{\mathbb{Q}}
\newcommand{\ZZ}{{\mathbb{Z}}}
\newcommand{\II}{\mathbb{I}}
\newcommand{\PPsi}{\mathbbl{\Psi}}
\newcommand{\sD}{\mathcal{D}}
\newcommand{\sE}{\mathcal{E}}
\newcommand{\sDp}{\mathcal{D}_\mathrm{p}}
\newcommand{\sS}{\mathcal{S}}
\newcommand{\sF}{\mathcal{F}}
\newcommand{\sU}{\mathcal{U}}
\newcommand{\bfa}{\mathbf{a}}
\newcommand{\bfb}{\mathbf{b}}
\newcommand{\bfc}{\mathbf{c}}
\newcommand{\DO}{\mathrm{DO}}
\newcommand{\Cp}{C_\mathrm{p}}
\newcommand{\Cc}{C_\mathrm{c}}
\newcommand{\Dens}{\Cp^\infty(\TTHM;\Omega_r)}
\newcommand{\TTM}{\mathbb{T}M}
\newcommand{\tHM}[1][]{\lie{t}^{#1}_HM}
\newcommand{\tsHM}{\lie{t}^*_HM}
\newcommand{\THM}{T_HM}
\newcommand{\TTHM}{\mathbb{T}_HM}
\newcommand{\aHG}[1][]{\lie{a}_H^{#1}G}
\newcommand{\AG}{\mathrm{A}G}
\newcommand{\AHG}{\mathrm{A}_H G}
\newcommand{\AAHG}{\mathbb{A}_HG}
\newcommand{\aaHG}{\mathbbl{a}_HG}
\newcommand{\EExp}{\mathbb{E}\mathrm{xp}}
\newcommand{\id}{\mathrm{id}}
\newcommand{\zoom}{\alpha}
\newcommand{\slot}{\,\cdot\,}
\newcommand{\into}{\hookrightarrow}
\newcommand{\PPtilde}{\widetilde{\PP}}
\newcommand{\ang}[1]{\langle #1 \rangle} 
\newcommand{\ip}[1]{\langle #1 \rangle} 
\newtheorem{theorem}{Theorem}
\newtheorem{lemma}[theorem]{Lemma}
\newtheorem{proposition}[theorem]{Proposition}
\newtheorem{corollary}[theorem]{Corollary}
\theoremstyle{definition}
\newtheorem{definition}[theorem]{Definition}
\newtheorem{example}[theorem]{Example}
\theoremstyle{remark} 
\newtheorem{remark}[theorem]{Remark}
\title{A groupoid approach to pseudodifferential calculi}
\author{Erik van Erp}
\address{Department of Mathematics, Dartmouth College, Hanover, NH 03755, USA}
\email{erikvanerp@dartmouth.edu}
\author{Robert Yuncken}
\thanks{R.~Yuncken was supported by the project SINGSTAR of the Agence Nationale de la Recherche, ANR-14-CE25-0012-01.}
\address{Université Clermont Auvergne, Université Blaise Pascal, BP 10448, F-63000 Clermont-Ferrand, France}
\email{yuncken@math.univ-bpclermont.fr}
\date{}         
\subjclass[2010]{Primary: 58J40; Secondary: 35S05, 47G30, 58H05, 22A22}
\keywords{Pseudodifferential operators; Lie groupoids; filtered manifolds; hypoelliptic operators}
\begin{document}

\begin{abstract}
We give an algebraic/geometric characterization of the classical pseudodifferential operators on a smooth manifold in terms of the tangent groupoid and its natural $\mathbb{R}^\times_+$-action.  Specifically, a properly supported semiregular distribution on $M\times M$ is the Schwartz kernel of a classical pseudodifferential operator if and only if it extends to a smooth family of distributions on the range fibres of the tangent groupoid that  is homogeneous for the $\mathbb{R}^\times_+$-action modulo smooth functions.  Moreover, we show that the basic properties of pseudodifferential operators can be proven directly from this characterization.  Further, with the appropriate generalization of the tangent bundle, the same definition applies without change to define pseudodifferential calculi on arbitrary filtered manifolds, in particular the Heisenberg calculus.
\end{abstract}

\maketitle


\section{Introduction}


In this article we propose a simple and coordinate-free geometric\slash algebraic definition of pseudodifferential operators that  recovers the classical pseudodifferential operators, but that  also applies without change to construct the Heisenberg calculus ({\em cf}.\ \cite{BeaGre, Taylor:microlocal}) and its generalizations to other filtered manifolds (\emph{cf}.\cite{Cummins, ChrGelGloPol, Melin:Lie_filtrations, Ponge:Heisenberg}).

Our  method is built upon the tangent groupoid.
 The tangent groupoid was used by Connes \cite{Connes:NCG} as a geometric device for glueing a pseudodifferential operator to its principal symbol.  We show that classical pseudodifferential operators can be completely characterized using the tangent groupoid.
In this approach, the {\em smoothness} of the tangent groupoid  accounts for the full asymptotic expansion of the symbol of a pseudodifferential operator, not just its principal term.

Furthermore, we will show that one can prove the basic properties of pseudodifferential operators directly from this geometric characterization.  This allows us to define more general pseudodifferential calculi without needing to prescribe symbol or kernel classes explicitly.

\subsection{Outline of the strategy}
\label{sec:outline}

Let $D\colon C^\infty(M)\to C^\infty(M)$ be a differential operator of order $m$, given in local coordinates by
\[ D=\sum_{|\alpha|\le m} a_\alpha\partial^\alpha.\]
Its highest order part at each point $x\in M$,
\[ D_x = \sum_{\alpha=m} a_\alpha(x)\partial^\alpha,\]
can be viewed as a smooth family of constant coefficient operators $D_x$ on the tangent spaces $T_xM$.
The family $\{D_x\}_{x\in M}$ is the principal `cosymbol' of $D$, i.e.\ the inverse Fourier transform of the principal symbol of $D$.
The operator $D$ and its cosymbol $\{D_x\}_{x\in M}$ can be smoothly assembled into a single differential operator $\DD$ on the tangent groupoid $\TTM$.

Algebraically, $\TTM$ is the disjoint union of the abelian groups $T_xM\cong \RR^n$ and a one parameter family of pair groupoids $M\times M$:
\[ \TTM = (TM\times\{0\}) \; \sqcup \; (M\times M\times \RR^\times).\]
The non-trivial aspect of the tangent groupoid is its smooth structure.

Let $\DD$ be the differential operator on $\TTM$ that restricts to $D_x$ on $T_xM$ and to $t^mD$ on each $M\times M\times \{t\}$ with $t\ne 0$, acting on the second factor in $M\times M$.
The smooth structure on $\TTM$ is precisely such that the coefficients of  $\DD$ are $C^\infty$ functions on $\TTM$.
Thus $\DD$ is  a left-invariant differential operator on $\TTM$ that glues the operator $D$ (at $t=1$) to its principal cosymbol (at $t=0$).

Let us reformulate this in terms of distributional kernels on the groupoid $\TTM$ (for full details see Example \ref{ex:diff_op}).
Since each $D_x$ is a translation invariant operator on the abelian group $T_xM$,
there exists a distribution $u_x$ on $T_xM$ such that $D_x$ acts as convolution with $u_x$,
\[D_x\varphi = u_x\ast \varphi\qquad u_x \in \sE'(T_xM).\]
Likewise,  $D$  has a Schwartz kernel $k$ that  is smooth in the first variable, meaning that the function
\[ k: x\mapsto k_x=D^t\delta_x\in \sE'(M)\]
is a smooth map from $M$ to $\sE'(M)$.
If we now define
\begin{align}
 \PP_{(x,0)}&=u_x\in \sE'(T_xM), & t=0 \nonumber \\
 \PP_{(x,t)}&=t^mk_x\in \sE'(M), & t\ne 0 \label{eq:diff_op_intro}
\end{align}
then $(\PP_{(x,t)})_{(x,t)\in M\times \RR}$
is a smooth family of compactly supported distributions in the fibres of the range map $r\,\colon \TTM\to \TTM^{(0)}= M \times \RR$.

The space of such smooth families of distributions on the $r$-fibres is denoted $\sE'_r(\TTM)$; see Section \ref{section:conv} or \cite{LesManVas}.  In analogy with the situation for Lie groups, these distributions form a convolution algebra in which the smooth functions---or more accurately, the smooth densities along the $r$-fibres---sit as a right ideal.

We need one more piece of structure.
The tangent groupoid has a one parameter family of Lie groupoid automorphisms
\begin{equation}
  \label{eq:classical_zoom}
  \alpha\;\colon\;\RR^\times_+\mapsto\mathrm{Aut}(\TTM)
\end{equation}
defined by
\begin{align*}
  \zoom_\lambda (x,y,t) &= (x,y,\lambda^{-1}t) &&(x,y,t)\in M\times M\times \RR^\times,\\
  \zoom_\lambda (x,\xi,0) &= (x, \delta_\lambda\xi,0) &&\xi\in T_xM, 
\end{align*}
where $\delta_\lambda$ denotes dilation of tangent vectors, $\delta_\lambda\xi = \lambda\xi$.
Note that the smooth family of distributions $\PP$ is homogeneous of weight $m$ for the action $\alpha$.
In fact, the differential operators are characterized by this property.

\begin{proposition}
 \label{prop:DOs}
A semiregular kernel $P \in \sE_r'(M\times M)$ is the Schwartz kernel of a differential operator of order $m$ if and only if $P = \PP|_{t=1}$ for some $\PP \in \sE_r'(\TTM)$ such that $\alpha_{\lambda*}\PP = \lambda^m\PP$ for all $\lambda\in \RR^\times_+$.
\end{proposition}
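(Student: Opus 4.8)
The plan is to prove the two implications separately, with the $t=0$ fibre playing a distinguished role throughout. For the ``only if'' direction, suppose $P$ is the Schwartz kernel of a differential operator $D = \sum_{|\alpha|\le m} a_\alpha \partial^\alpha$ of order $m$. First I would extract the top-order part $D_x = \sum_{|\alpha|=m} a_\alpha(x)\partial^\alpha$, viewed as a constant-coefficient operator on $T_xM$, and let $u_x \in \sE'(T_xM)$ be the distribution with $D_x\varphi = u_x \ast \varphi$; then define $\PP$ by \eqref{eq:diff_op_intro}. The two things to check are (i) that $\PP \in \sE_r'(\TTM)$, i.e.\ that this piecewise definition assembles into a genuine smooth family of compactly supported distributions along the $r$-fibres of the tangent groupoid, and (ii) that $\alpha_{\lambda*}\PP = \lambda^m\PP$. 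Step (ii) is a direct computation: on the fibres over $t\ne 0$, $\zoom_\lambda$ rescales $t \mapsto \lambda^{-1}t$, so $(\zoom_\lambda)_*(t^m k_x) = (\lambda^{-1}t)^{-m}$-weighting gives the factor $\lambda^m$; on the $t=0$ fibre, the dilation $\delta_\lambda$ sends the constant-coefficient convolution kernel $u_x$ of a homogeneous-degree-$m$ differential operator to $\lambda^m u_x$ (a monomial $\partial^\alpha$ with $|\alpha|=m$ has $\delta_\lambda$-homogeneous kernel of the right weight; one checks this on $\partial^\alpha\delta_0$). The only nontrivial point is (i), smoothness across $t=0$, but this is exactly the content of the discussion preceding the proposition --- in fact it is the defining property of the smooth structure on $\TTM$ --- so I would simply invoke Example~\ref{ex:diff_op}, where the reduced convolution kernel of the left-invariant differential operator $\DD$ on $\TTM$ is shown to lie in $\sE_r'(\TTM)$, and observe that its restriction to $t=1$ is $P$ and its restriction to $t=0$ is $u_x$.

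For the ``if'' direction, suppose we are given $\PP \in \sE_r'(\TTM)$ with $\alpha_{\lambda*}\PP = \lambda^m\PP$ and $P = \PP|_{t=1}$. The strategy is to use the homogeneity to transport information from the fibre over any $t\ne 0$ to the fibre over $t=0$, and then read off that $\PP$ must have the form \eqref{eq:diff_op_intro} for some $D$. Concretely, for $t\ne 0$ the homogeneity forces $\PP_{(x,t)} = t^m\, \PP_{(x,1)} = t^m k_x$, so $\PP$ on $M\times M\times\RR^\times$ is determined by the single semiregular kernel $P$; the constraint is entirely about what happens as $t\to 0$. Since $\PP$ is a smooth family of compactly supported distributions and restriction to the closed saturated subset $\{t=0\}$ is continuous (indeed smooth) in $\sE_r'$, the limit $\PP_{(x,0)} = \lim_{t\to 0} t^m k_x$ exists in $\sE'(T_xM)$ (after identifying nearby $r$-fibres with $T_xM$ via the groupoid structure near the unit space). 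I would argue that a family $k_x$ for which $t^m k_x$ has a smooth limit of this kind as $t\to 0$, together with the homogeneity of the limit distribution under $\delta_\lambda$ (inherited from $\alpha_{\lambda*}\PP = \lambda^m\PP$ on the $t=0$ fibre), must be the Schwartz kernel of a differential operator: the smoothness of $\TTM$ near $t=0$ translates, in local coordinates, into the statement that $k_x(y)$ admits an asymptotic expansion near the diagonal whose leading behaviour under the rescaling is that of a finite sum of derivatives of $\delta_x$, and the automorphism weight $m$ caps the order of those derivatives at $m$. Concretely, one can test $\PP$ against functions on $\TTM$ of the form $(x,y,t)\mapsto \varphi(x)\psi(t)\chi(\tfrac{y-x}{t})$ and use smoothness at $t=0$ to get the coefficients $a_\alpha(x)$ as the moments $\int \xi^\alpha\, d\PP_{(x,0)}(\xi)$, which are smooth in $x$ and vanish for $|\alpha|>m$ by the homogeneity bound; reassembling, $P$ is the kernel of $D = \sum_{|\alpha|\le m} c_\alpha a_\alpha\partial^\alpha$.

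The main obstacle, and the step I expect to require the most care, is the ``if'' direction's passage from ``$t^m k_x$ extends smoothly to $t=0$ in $\sE_r'(\TTM)$'' to ``$k_x$ is the kernel of a differential operator of order $\le m$''. This is essentially a converse to the construction of $\DD$, and it hinges on a clean description of which distributions on $M\times M$ (smooth in the first variable) extend to elements of $\sE_r'(\TTM)$: the smoothness of the tangent groupoid encodes a Taylor-type expansion, and one must verify that the homogeneity weight $m$ forces this expansion to terminate, leaving only finitely many $\delta$-derivative terms supported on the diagonal. I would handle this by working in local coordinates, where $\TTM$ near $M\times\{0\}$ is diffeomorphic to an open subset of $M\times\RR^n\times\RR$ via $(x,y,t)\leftrightarrow(x,\tfrac{y-x}{t},t)$ for $t\ne 0$, so that a smooth family of compactly supported distributions along the $r$-fibres is literally a smooth family $\xi\mapsto \PP_{(x,t)}$ of compactly supported distributions in the $\xi$-variable depending smoothly on $(x,t)$ including $t=0$; the homogeneity condition $\alpha_{\lambda*}\PP=\lambda^m\PP$ then reads $\PP_{(x,t)}(\xi) = \lambda^m \PP_{(x,\lambda^{-1}t)}(\lambda^{-1}\xi)$ in this chart, and a standard argument (differentiate in $\lambda$ at $\lambda=1$, or apply the Euler operator) shows that $\PP_{(x,0)}$ is a homogeneous distribution of degree $m-n$ (accounting for the Jacobian), hence a polynomial in $\partial$ of degree exactly $m$ applied to $\delta_0$; pulling this back through the chart gives the differential operator.
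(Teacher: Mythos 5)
Your ``only if'' direction is essentially the paper's: define $\PP$ by \eqref{eq:diff_op_intro}, check exact homogeneity fibre by fibre, and invoke Example \ref{ex:diff_op} for smoothness across $t=0$. That part is fine.

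The ``if'' direction has a genuine gap: the heart of the matter is to show that $P=\PP_1$ is supported on the diagonal, and your argument never establishes this. Two specific problems. First, the deduction ``$\PP_{(x,0)}$ is homogeneous of the appropriate degree, hence a polynomial in $\partial$ applied to $\delta_0$'' is false as stated: there are many exactly homogeneous distributions of a given weight that are not supported at the origin (derivatives of $\mathrm{p.v.}(1/x)$, Riesz-type kernels --- indeed the cosymbols of non-differential homogeneous pseudodifferential operators). What saves the $t=0$ fibre is that $\PP_{(x,0)}$ is \emph{compactly supported} with $\delta_\lambda$-invariant support, hence supported at $\{0\}$; only then does the structure theorem for point-supported distributions apply. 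Second, and more seriously, identifying $\PP_0$ says nothing about $\PP_1=P$: if $k_x$ is the kernel of an order-$m$ differential operator plus a smooth properly supported density vanishing near the diagonal, the family $\PP_t:=t^m k_x$ is still exactly homogeneous for all $t\neq0$ (the zoom action on $M\times M\times\RR^\times$ merely relabels $t$) and is invisible to your rescaled test functions $\chi\bigl((y-x)/t\bigr)$, which only probe a neighbourhood of the diagonal. What rules such a $P$ out is that the family fails to lie in $\sE_r'(\TTM)$: the off-diagonal support escapes to infinity in the $t\to0$ chart, so $r$ restricted to $\supp\PP$ is not proper (equivalently, the pairing with test functions that grow in the fibre direction diverges). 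The paper's proof (Proposition \ref{prop:H-DOs}) makes exactly this the central step: exact homogeneity forces $\supp\PP$ to be invariant under the zoom action, the only zoom-orbits on which $r$ is proper lie in the unit space, hence $\supp\PP\subseteq\TTM^{(0)}$; then $P$ is automatically the kernel of a differential operator, and the order bound comes from a downward induction ($t^nP=t^{n-m}\PP\to0$ at $t=0$ for $n>m$, so $\sigma_n(P)=0$) rather than from moments. You need to add this support\slash properness argument; without it the proof cannot distinguish differential operators from more general homogeneous kernels.
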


The motivating observation of this paper is that pseudodifferential operators admit a similar characterization.

\begin{theorem}
 \label{thm:PsiDOs}
A semiregular kernel $P \in \sE_r'(M\times M)$ is the Schwartz kernel of a properly supported classical pseudodifferential operator of order $m$ if and only if $P = \PP|_{t=1}$ for some $\PP \in \sE_r'(\TTM)$ such that $\alpha_{\lambda*}\PP - \lambda^m\PP$ is a smooth density for all $\lambda\in \RR^\times_+$.
\end{theorem}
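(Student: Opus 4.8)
The plan is to prove the two implications separately. In both, I would first use a partition of unity and the elementary local formulas for the tangent groupoid (its zoom action and its blow-down near $t=0$) to reduce to a single coordinate chart. Since the statement for smoothing operators can be checked directly — their kernels admit essentially homogeneous lifts with vanishing cosymbol, hence correspond to smooth densities on $\TTM$ — and since both conditions are stable under adding a smooth density, I may treat the general case modulo smoothing operators and concentrate on the behaviour near the diagonal, i.e.\ work with a properly supported classical symbol $a(x,\xi)\sim\sum_{j\ge 0}a_{m-j}(x,\xi)$.

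For the forward implication I would construct $\PP$ exactly as in the differential-operator case (Proposition~\ref{prop:DOs}, Example~\ref{ex:diff_op}), the only change being that the highest-order part is now a general homogeneous symbol rather than a polynomial in $\xi$: for $t\ne 0$ let $\PP_{(x,t)}$ be the Schwartz kernel of the operator whose symbol is $t^m a(x,\xi)$ at high frequencies, smoothly adjusted at low frequencies so that, in the blow-down coordinates, the family extends across $t=0$ with value the inverse Fourier transform $u_x$ of the principal symbol $a_m(x,\slot)$; equivalently $\PP_{(x,t)}$ is $t^m$ times the kernel of $A$ plus a smooth-density correction. To see $\PP\in\sE_r'(\TTM)$ I would use polyhomogeneity: in blow-down coordinates the family has an asymptotic expansion $\sum_j t^j\,(\text{inverse Fourier transform of }a_{m-j}(x,\slot))$ whose remainders are of increasing regularity. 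Essential homogeneity should then fall out: for $t\ne 0$ the $t^m$-scaling is exact up to the smooth correction, so $\alpha_{\lambda*}\PP-\lambda^m\PP$ is the difference of the corrections at $\lambda t$ and at $t$, hence smooth; and at $t=0$ it is $(\delta_\lambda)_*u_x-\lambda^m u_x$, which is smooth because $u_x$ is exactly homogeneous modulo a Schwartz function. It remains to check that these pieces assemble into a single smooth density and that $\PP|_{t=1}$ recovers $P$ up to a smooth kernel, which is absorbed.

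For the converse, starting from $\PP$ with $\alpha_{\lambda*}\PP-\lambda^m\PP$ a smooth density for every $\lambda$, I would first restrict to $t=0$ to obtain a smooth family of compactly supported distributions $u_x$ on the $T_xM$ with $(\delta_\lambda)_*u_x-\lambda^m u_x$ a compactly supported smooth density; Fourier-transforming along the fibres shows $\widehat{u_x}$ is a homogeneous symbol of order $m$ up to a Schwartz remainder, which identifies the candidate principal symbol of $P$. Next I would argue that the essential homogeneity spreads the smoothness of $\PP$ already present at $t=0$ away from the zero section: it survives the blow-down to a neighbourhood of $t=0$ off the diagonal, and then the relation $\PP_{(x,\lambda t)}=\lambda^m\PP_{(x,t)}+(\text{smooth})$ carries it to all $t\ne 0$ off the diagonal, so that $P=\PP|_{t=1}$ is a properly supported distribution conormal to the diagonal, locally of the form $\int e^{i(x-y)\xi}\sigma(x,\xi)\,d\xi$. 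Finally, transporting the smoothness of $\PP$ in the $t$-direction near $t=0$ through the blow-down — which is precisely what turns $t$-regularity into decay in $\xi$ — I would show that $\sigma$ obeys the Hörmander estimates of order $m$ and that the Taylor expansion of $\PP$ in $t$ at $t=0$ is the polyhomogeneous expansion of $\sigma$, so $A$ is classical of order $m$. (Alternatively one can run an induction on the order, subtracting the lift — supplied by the forward implication — of a classical operator with the principal symbol just found, to obtain an essentially homogeneous family whose $t=0$ restriction, hence whose smooth defect, is divisible by $t$, and then dividing.)

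The step I expect to be the main obstacle is exactly the analysis at the corner $t=0$: pinning down the dictionary between smooth, essentially homogeneous families in $\sE_r'(\TTM)$ and polyhomogeneous symbols. Concretely, one must show that the asymptotic sum in the construction really does define an element of $\sE_r'(\TTM)$, controlling convergence and the regularity of the tails across $t=0$ simultaneously, and, in the other direction, that smoothness across $t=0$ together with essential homogeneity forces both the Hörmander estimates and polyhomogeneity, so that no larger symbol class can intrude. Throughout, the ``modulo smooth'' bookkeeping stays consistent because, as recalled earlier, the smooth densities form a right ideal in the convolution algebra $\sE_r'(\TTM)$.
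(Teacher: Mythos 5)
Your proposal is correct and follows essentially the same route as the paper: homogeneous symbol components correspond to constant (hence essentially homogeneous) families in exponential coordinates, the forward direction is an asymptotic summation of these lifts, and the converse combines the decay estimates forced by essential homogeneity (placing $\hat P$ in $S^m$) with the peeling argument $B_{j+1}=t^{-1}(B_j-A_j)$ --- exactly your parenthetical alternative --- to extract polyhomogeneity. The only organizational difference is that the paper sidesteps the convergence issue you flag at $t=0$ by lifting each homogeneous term $a_j$ separately and then invoking completeness of asymptotic series (Theorem \ref{thm:completeness}) together with the regularity estimates of Theorem \ref{thm:k-regularity}, rather than verifying directly that $t^m$ times the full kernel extends smoothly across $t=0$.
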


Now, if we replace $\TTM$ by the appropriate tangent groupoid $\TTHM$
for a contact manifold \cite{VanErp:Thesis, Ponge:Groupoid}, or a more general filtered manifold \cite{ChoPon, VanYun:groupoid}, then the {\em same} definition can be used to produce versions of the Heisenberg calculus \cite{BeaGre, Taylor:microlocal} or Melin's calculus \cite{Melin:Lie_filtrations}.
In this way we show that, once an appropriate tangent groupoid has been constructed, the details of the corresponding pseudodifferential calculus follow automatically.

In particular, we prove, in the general setting of
filtered manifolds, 
that our pseudodifferential operators form a $\ZZ$-filtered algebra $\Psi_H^\bullet(M)$; 
that restriction to $t=0$ in $\sE_r'(\TT_HM)$ assigns a principal cosymbol to an operator in the calculus; 
that there is a short exact sequence
\begin{equation}
 \label{eq:short_exact_sequence}
 0\to \Psi_H^{m-1}(M)\to \Psi_H^m(M) \stackrel{\sigma_m}{\longrightarrow} \Sigma^m_H(M)\to 0 , 
\end{equation}
where $\Sigma^m_H(M)$ is the (noncommutative) algebra of principal cosymbols;
that the intersection $\cap_{m\in \ZZ} \Psi^m(M)$ is the ideal of properly supported smoothing operators $\Psi^{-\infty}(M)$;
that $\Psi_H^\bullet(M)$ is complete with respect to asymptotic expansions;
and that if an operator in $\Psi_H^m(M)$ has invertible  principal cosymbol,
then it has a parametrix in $\Psi_H^{-m}(M)$.

As an application, we immediately obtain the hypoellipticity of `elliptic' elements in our calculi, where ellipticity here is taken in the abstract sense of having an invertible principal cosymbol%
\footnote{
Since the appearance of this work as a preprint, Dave and Haller, building on \cite{ChrGelGloPol} and \cite{Ponge:Heisenberg}, have shown that our abstract ellipticity condition is equivalent to the pointwise Rockland condition.  See \cite{DavHal} for details.
}. 
Moreover, given that there is a tangent groupoid associated to these calculi, Connes tangent groupoid method \cite{Connes:NCG} should yield abstract index theorems for such operators, as is the case for subelliptic operators on a contact manifold \cite{VanErp:I}.

\begin{remark}
A different geometric approach to the Heisenberg calculus is that of Epstein and Melrose \cite{EpsMel:book} using compactifications of the cotangent bundle.  This approach is currently limited to the standard calculus and the Heisenberg calculus, but it has other notable advantages, for instance an explicit noncommutative product on the symbol algebra and even a fusion of the standard and Heisenberg calculi.  For comments on the relationship between the two, see Section \ref{sec:symbols}.
\end{remark}

\subsection{Structure and history of the paper}

In Section \ref{section:conv} we briefly recap convolution algebras of distributions on Lie groupoids.  Section \ref{sec:tangent_groupoid} recaps the tangent groupoid of a filtered manifold.  This tangent groupoid construction, which is fundamental for the present work, was originally presented in an earlier version of this paper (arXiv preprint \href{https://arxiv.org/abs/1511.01041v3}{1511.01041 Version 3}).  Following the suggestion of a referee, that construction now appears in a separate article, \cite{VanYun:groupoid}.  See also \cite{ChoPon, HigSad} for other points of view on this tangent groupoid.

The main new results of this paper are contained in Sections \ref{sec:PsiDOs}--\ref{sec:H-ellipticity}, where we define our pseudodifferential operators and prove their fundamental properties.  In Section \ref{sec:diff_ops} we examine the particular case of differential operators.  In Section \ref{sec:classical_PsiDOs} we prove that our construction recovers the classical pseudodifferential operators in the case of the classical tangent groupoid.

Finally, in Section \ref{sec:filtered_groupoid}, we show that our construction can be generalized to yield right-invariant pseudodifferential operators on the leaves of an arbitrary Lie groupoid, in the sense of \cite{NisWeiXu, MonPie}, perhaps with an additional filtered structure on the Lie algebroid.  Thanks to existing work by many authors, \emph{e.g.}\ \cite{Connes:integration, NisWeiXu, AmmLauNis, DebLesRoc}, this yields well-known classes of pseudodifferential operators associated to singular structures such as foliations, manifolds with boundary, and stratified manifolds.

\subsection{Acknowledgements}

This article was inspired by an observation of Debord and Skandalis in their paper \cite{DebSka} that   provides the first abstract characterization of the classical pseudodifferential operators in terms of the $\RR^\times_+$-action on the tangent groupoid.  We wish to thank them for many discussions,  particularly during the time that the first author was {\em Professor Invit\'e} at the Universit\'e Blaise Pascal, Clermont-Ferrand II.  Sincere thanks also go to Jean-Marie Lescure and Nigel Higson.



\section{Convolution of distributions on Lie groupoids}\label{section:conv}

The relevant framework for our approach to pseudodifferential operators is  convolution  of  distributions on a Lie groupoid (see \cite{LesManVas}). We  summarize the main points.

\subsection{Lie groupoids}
\label{sec:notation}

For the basic theory of Lie groupoids, see \cite{Mackenzie} or \cite{MoeMrc}.

Algebraically, a groupoid $G$ is a small category with inverses.
The set of objects of the groupoid is denoted $G^{(0)}$,
while the set of morphisms is $G^{(1)}$.
Each morphism $x\in G^{(1)}$ has a source object $s(x)\in G^{(0)}$ and range object $r(x)\in G^{(0)}$.
It is customary to let $G=G^{(1)}$, and to identify each object $u\in G^{(0)}$
with the unique identity morphism of $u$, so that $r(u)=s(u)=u$.
Thus, $G^{(0)}$ is treated as a subset of $G=G^{(1)}$.
We refer to morphisms simply as {\em elements} of the groupoid $G$,
while objects are called {\em units}.

The set of composable morphisms is
\[ G^{(2)} := \{(x,y)\in G\times G\mid s(x)=r(y)\}\]
 following the convention that morphisms point from right to left.
Composition of morphisms gives a map $G^{(2)}\to G$, $(x,y)\mapsto xy$,
conceived as multiplication of groupoid elements.

A  {\em Lie groupoid} is a groupoid $G$ that is also a  smooth manifold,
such that the space of units $G^{(0)}$ is a smooth submanifold of $G$;
source and range maps $s,r : G \to G^{(0)}$  are smooth submersions;
multiplication is a smooth map $G^{(2)}\to G$;
and the inverse map $x\mapsto x^{-1}$ is smooth $G\to G$.

Note that $G^{(2)}$  is a submanifold of $G\times G$ because $r,s$ are submersions. 
We write $G^x = r^{-1}(\{x\})$ and $G_x=s^{-1}(\{x\})$ for the range and source fibres at $x\in G^{(0)}$,
which are submanifolds of $G$.

\begin{example}
\label{ex:pair_groupoid}
The \emph{pair groupoid} $M\times M$ of a smooth manifold $M$ has  range and source maps $r(x,y)=x$, $s(x,y) = y$ and composition $(x,y)(y,z) = (x,z)$.  
\end{example}

\begin{example}
\label{ex:bundle_of_groups}
 A {\em smooth bundle of Lie groups} is a groupoid  with $r=s$. Then the fibres $G_x = G^x$ are Lie groups.
In particular, the tangent bundle $TM$ is a groupoid when viewed as a smooth bundle of abelian groups.  
\end{example}

\subsection{Convolution of fibred distributions}
\label{sec:convolution}

\begin{definition}\label{Def:fibred_dist}
An \emph{$r$-fibred distribution}
\footnote{
Our $r$-fibred distributions correspond to what Lescure, Manchon and Vassout call \emph{distributions transversal to the range maps}.  
We work with densities tangent to the $r$-fibres, while they work with half-densities.
Our choice has been made so that Schwartz kernels act on smooth functions on $M$ rather than on half-densities.

Technically, this definition gives the \emph{$r$-properly supported} $r$-fibred distributions, in the sense that $r:\supp(u) \to G^{(0)}$ is proper. 
This is the appropriate support condition for the convolution algebra of $r$-fibred distributions and we will not consider any others.
}
on a Lie groupoid $G$ is a continuous $C^\infty(G^{(0)})$-linear map
\[
  u: C^\infty(G) \to C^\infty(G^{(0)}),
\]
where the $C^\infty(G^{(0)})$-module structure on $C^\infty(G)$ is induced by the pullback of functions  via $r$.
\end{definition}
We write $\ip{u,f}$ for the image of $f\in C^\infty(G)$ under $u$.
The set of $r$-fibred distributions on $G$ is denoted $\sE'_r(G)$.
Similarly, $\sE'_s(G)$ denotes the set of $s$-fibred distributions on $G$.

An $r$-fibred distribution $u$ determines a smooth family $(u_x)_{x\in G^{(0)}}$ of distributions on the $r$-fibres $u_x\in\sE'(G^x)$, 
\[
  \ip{u,f}(x) = \ip{u_x,f|_{G^x}}, \qquad f\in C^\infty(G).
\]
Likewise, $(v_x)_{x\in M}$ denotes the smooth family of distributions on the $s$-fibres associated to $v\in\sE'_s(G)$.

The \emph{convolution} $u\ast v$ of two $r$-fibred distributions $u,v\in \sE'_r(G)$ is the $r$-fibred distribution $u\ast v$ defined by
\[ 
 \ang{ u\ast v, \varphi }(x)  = \ang{ u^x(\gamma), \,\ang{ v^{s(\gamma)}(\beta), \varphi(\gamma\beta)}}
\]
for  a test function $\varphi \in C^\infty(G)$ and  $x\in G^{(0)}$. Here $\gamma\in G^x$ and $\beta\in G^{s(\gamma)}$ are place-holder variables.
Likewise, convolution of $s$-fibred distributions $u, v\in \sE'_s(G)$ is given by
\[ 
 \ang{u\ast v, \varphi}(x) = \ang{v_x(\gamma), \ang{u_{r(\gamma)}(\beta), \varphi(\beta\gamma)}}
\]

\begin{example}
\label{ex:semiregular_Schwartz_kernels}
  Let $G=M \times M$ be the pair groupoid of a manifold $M$.  Then
  \[
   \sE'_r(M\times M) \cong C^\infty(M;\sE'(M)) \cong C^\infty(M) \hat\otimes \sE'(M),
  \]
  i.e., distributions that  are semiregular in the first variable. These are precisely the Schwartz kernels of continuous linear operators $C^\infty(M) \to C^\infty(M)$; see \cite[Ch.51]{Treves:PsiDOs}.
  Convolution in $\sE'_r(M\times M)$ corresponds to the usual composition law for Schwartz kernels:
  \[
   u*v(x,z) = \int_{y\in M} u(x,y)v(y,z).
  \]
\end{example}

\subsection{The ideal of smooth  densities}
\label{sec:densities}

\begin{definition}
\label{def:proper_subset}
A subset $X$ of a groupoid $G$ is \emph{proper} if both $r:X\to M$ and $s:X\to M$ are proper maps.  
\end{definition}

We denote by $\Omega_r$ and $\Omega_s$ the bundle of smooth densities tangent to the range and source fibres of $G$ respectively.  The space of smooth sections of $\Omega_r$ whose support is a proper subset of $G$ is denoted $\Cp^\infty(G;\Omega_r)$.  

\begin{proposition}[\cite{LesManVas}]
$\Cp^\infty(G,\Omega_r)$ is a right ideal in $\sE'_r(G)$ and $\Cp^\infty(G,\Omega_s)$ is a left ideal in $\sE'_s(G)$.
\end{proposition}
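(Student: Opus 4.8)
The plan is to realise $\Cp^\infty(G;\Omega_r)$ inside $\sE'_r(G)$ by fibrewise integration, and then to prove that convolving such a smooth density, on the appropriate side, with an arbitrary $r$-fibred distribution again lands in $\Cp^\infty(G;\Omega_r)$. The assertion for $\Omega_s$ is the exact mirror image — interchange source and range, hence left and right multiplication — so it needs no separate argument.

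First I would record the inclusion. A section $a\in\Cp^\infty(G;\Omega_r)$ restricts on each range fibre $G^x$ to a smooth density $a_x$ of compact support, the compactness because $r|_{\supp a}$ is proper; hence $\langle a,f\rangle(x):=\int_{G^x}f|_{G^x}\,a_x$ is a well-defined map $C^\infty(G)\to C^\infty(G^{(0)})$. It is continuous, and it is $C^\infty(G^{(0)})$-linear because functions pulled back along $r$ are constant on range fibres; thus $\Cp^\infty(G;\Omega_r)\subseteq\sE'_r(G)$, as needed.

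Now take $u\in\sE'_r(G)$ and $a\in\Cp^\infty(G;\Omega_r)$ and compute $u*a$. In the defining formula the inner pairing $\langle a^{s(\gamma)}(\beta),\varphi(\gamma\beta)\rangle$ is the honest fibre integral $\int_{G^{s(\gamma)}}\varphi(\gamma\beta)\,a_{s(\gamma)}(\beta)$; I would substitute $\eta=\gamma\beta$. Since left translation $L_\gamma\colon G^{s(\gamma)}\to G^x$ is a diffeomorphism carrying range fibres to range fibres, the pushforward $b(\gamma,\cdot):=(L_\gamma)_*a_{s(\gamma)}$ is a legitimate smooth density on $G^x$ depending smoothly on $\gamma\in G^x$ — this is precisely why one builds the calculus from densities tangent to the $r$-fibres. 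The inner pairing becomes $\int_{G^x}\varphi(\eta)\,b(\gamma,\eta)$, and, $u^x$ being continuous, it may be interchanged with the $\eta$-integral, exhibiting $u*a$ as the fibrewise family of densities $\eta\mapsto c_x(\eta):=\langle u^x(\gamma),b(\gamma,\eta)\rangle$ on the range fibres. Properness of $\supp(u*a)$ is then routine: $c_x(\eta)\ne0$ forces $\eta\in\supp u\cdot\supp a$, which is a proper subset of $G$ because $\supp a$ is proper and $r|_{\supp u}$ is proper.

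The hard part is showing that $c$ is a \emph{smooth} section of $\Omega_r$, together with the interchange that produced it. I would argue locally, over a neighbourhood $U\subseteq G^{(0)}$ trivialising the submersion $r$, say $G|_U\cong U\times V$: there the reduced kernel $b$ is represented by a function of $(x,w,v)$ smooth in all its arguments — this uses smoothness of multiplication, inversion, and of the density bundle $\Omega_r$ — while $u$ is represented by a smooth $\sE'(V)$-valued map $x\mapsto u_x$ with fibrewise compact support. Pairing such a smoothly varying compactly supported distribution against the smoothly varying test function $w\mapsto b(x,w,v)$ gives a function of $(x,v)$ that is smooth, which is exactly the content of $u$ being a \emph{continuous $C^\infty(G^{(0)})$-linear} map rather than merely a family of distributions. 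Patching over a locally finite cover yields $c\in C^\infty(G;\Omega_r)$, and therefore $u*a\in\Cp^\infty(G;\Omega_r)$. So the one delicate point is this Fubini-and-joint-smoothness step; everything else is bookkeeping with the groupoid operations and with supports.
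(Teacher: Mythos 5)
The paper itself offers no proof of this proposition---it is imported wholesale from \cite{LesManVas}---so there is nothing in-text to compare against, and your argument must be judged on its own. Its core is right: computing $u\ast a$ with the density as the \emph{right-hand} factor, substituting $\eta=\gamma\beta$, and using that left translation carries $r$-fibres to $r$-fibres (hence pushes $\Omega_r$-densities forward to $\Omega_r$-densities) is exactly the mechanism that makes the statement work, and you correctly isolate the Fubini-plus-joint-smoothness step as the only analytic content. One point you should make explicit rather than bury in ``on the appropriate side'': under the textbook convention a right ideal satisfies $J\ast R\subseteq J$, i.e.\ it would be $a\ast u$ that must be smooth, and that statement is \emph{false} (on the pair groupoid take $u_y=\delta_0$ for all $y$; then $a\ast u(x,z)=\bigl(\int_y a(x,y)\,dy\bigr)\delta_0(z)$ is a Dirac in $z$). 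So the proposition can only mean what you prove, namely $\sE'_r(G)\ast\Cp^\infty(G;\Omega_r)\subseteq\Cp^\infty(G;\Omega_r)$, and it is worth recording the counterexample that rules out the other reading.

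The one genuine gap is your support claim. The target $\Cp^\infty(G;\Omega_r)$ consists of densities whose support is proper in the sense that \emph{both} $r$ and $s$ restrict to proper maps on it, whereas $u\in\sE'_r(G)$ is only required to have $r$-properly supported support. Your argument that $\supp u\cdot\supp a$ is proper does give $r$-properness (bound $\gamma$ by $r(\gamma)\in K$, then $\beta$ by $r(\beta)=s(\gamma)$), but for $s$-properness one has to bound the set $\{\gamma\in\supp u \st s(\gamma)\in r(C)\}$ for $C$ compact, and that requires $s$-properness of $\supp u$, which you do not have. Concretely, on the pair groupoid of $\RR$ let $u_x=\delta_0$ for every $x$ (a legitimate element of $\sE'_r$, with support $\RR\times\{0\}$, on which $r$ is proper but $s$ is not) and let $a$ be supported near the diagonal; then $(u\ast a)(x,z)=a(0,z)$ has support $\RR\times\supp a(0,\slot)$, which is $r$-proper but not $s$-proper. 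So $u\ast a$ is a smooth, $r$-properly supported density that need not lie in $\Cp^\infty(G;\Omega_r)$ as literally defined. To close this you must either weaken the support condition on the target to $r$-properness, or restrict $u$ to the proper distributions $\sE'_{r,s}(G)$---which is what the paper does anyway whenever it actually invokes the (two-sided) ideal property.
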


\begin{example}
 On the pair groupoid (Example \ref{ex:pair_groupoid}), $\Cp^\infty(M\times M; \Omega_r)$ is the algebra of Schwartz kernels of properly supported smoothing operators.  
\end{example}

\subsection{Proper distributions}
\label{sec:proper_distributions}

The  smooth densities  are not a {\em two-sided} ideal in $\sE'_r(G)$ or in $\sE'_s(G)$. 
To remedy this we take the intersection of $\sE'_r(G)$ and $\sE'_s(G)$, using a transverse measure.

A {\em transverse measure} on a Lie groupoid $G$ is a positive smooth density $\mu$ on the object space $G^{(0)}$.
Fixing a transverse measure allows us to integrate $r$- and $s$-fibred distributions to global distributions on $G$, via the maps
\begin{align*}
 \mu_r&\;\colon\; \sE'_r(G)\to \sD'(G)\; ; \qquad u\mapsto \mu\circ u=\int_{G^{(0)}} \ang{u,\slot}(x) d\mu(x), \\
 \mu_s&\;\colon\; \sE'_s(G)\to \sD'(G)\; ; \qquad u\mapsto \mu\circ u=\int_{G^{(0)}} \ang{u,\slot}(x) d\mu(x).
\end{align*}
 
\begin{definition}
\label{def:proper_distribution}
The set of {\em proper distributions} on $G$, denoted $\sDp'(G)$, is the intersection 
\[ \sDp'(G) = \mu_r(\sE'_r(G))\cap \mu_s(\sE'_s(G))\quad \subset \sD'(G).\]
An $r$-fibred distribution $u\in \sE'_r(G)$ is called {\em proper} if $\mu_r(u)$  is proper.
We denote the set of proper $r$-fibred distributions as $\sE'_{r,s}(G)$.
\end{definition}

In other words, a properly supported distribution $u\in \sD'(G)$ is proper
if it can be disintegrated along the $r$-fibres as well as along the $s$-fibres.
The sets $\sDp'(G)$ and $\sE'_{r,s}(G)$ are independent of the choice of transverse measure $\mu$.

\subsection{Proper Schwartz kernels}
\label{sec:pair_groupoid}

Proposition \ref{prop:regular} below shows that any reasonable  pseudodifferential calculus (on a manifold without boundary) is a subalgebra of the convolution algebra $\sE'_{r,s}(M\times M)$.

We fix a smooth density $\mu$ on $M$, which  allows us to identify smooth functions and smooth densities on $M$ via $f\mapsto f\mu$.  It also gives us a bilinear form on $\Cc^\infty(M)$:
\[
 \ang{f,g} = \int_M f(x)g(x) \,d\mu(x), \qquad f,g\in \Cc^\infty(M).
\]
The discussion in \cite[Ch.51]{Treves:PsiDOs} yields the following.

\begin{proposition}
\label{prop:regular}
Let $P:\Cc^\infty(M) \to \sD'(M)$ be a continuous linear operator with Schwartz kernel $p\in\sD'(M\times M)$.  The following are equivalent:
\begin{enumerate}
\item
$P$ maps $\Cc^\infty(M)$ to itself and admits a transpose $P^t: \Cc^\infty(M)\to \Cc^\infty(M)$ such that $\ang{f,Pg}=\ang{P^tf,g}$ for any $f,g\in \Cc^\infty(M)$,

\item
$P$ extends to continuous operators $\Cc^\infty(M)\to \Cc^\infty(M)$, $C^\infty(M)\to C^\infty(M)$, $\sE'(M)\to\sE'(M)$ and $\sD'(M) \to \sD'(M)$.

\item
$p$ is a proper distribution on the pair groupoid.

\item
$p$ is properly supported and semiregular in both variables.

\end{enumerate}
\end{proposition}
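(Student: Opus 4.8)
The plan is to run everything through the Schwartz kernel $p$ of $P$, using the standard dictionary between semiregularity of $p$ and mapping properties of $P$ and its transpose (as developed in \cite[Ch.~51]{Treves:PsiDOs}), and to prove the cycle $(1)\Rightarrow(4)\Leftrightarrow(3)$ together with $(4)\Rightarrow(2)\Rightarrow(1)$.

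I would first dispose of $(3)\Leftrightarrow(4)$ by unwinding Definition~\ref{def:proper_distribution} in the light of Example~\ref{ex:semiregular_Schwartz_kernels}: a distribution $p$ on the pair groupoid lies in $\mu_r(\sE'_r(M\times M))$ exactly when it is semiregular in the first variable and $r\colon\supp p\to M$ is proper, and it lies in $\mu_s(\sE'_s(M\times M))$ exactly when it is semiregular in the second variable and $s\colon\supp p\to M$ is proper. Intersecting these two conditions, and noting that $\supp p\subset M\times M$ is a proper subset precisely when both $r$ and $s$ are proper on it, gives condition $(4)$.

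Next I would prove $(1)\Rightarrow(4)$. Continuity of $P\colon\Cc^\infty(M)\to\Cc^\infty(M)\subset C^\infty(M)$ (automatic from the closed graph theorem once $P$ is known to land in $C^\infty(M)$) forces $x\mapsto p(x,\cdot)$ to be smooth with values in $\sD'(M)$, i.e.\ $p$ is semiregular in the first variable; applying the same to $P^t$, whose Schwartz kernel is $p$ with the two factors interchanged, gives semiregularity in the second variable. For the support condition, a standard factorization lemma (Dieudonn\'e--Schwartz) shows $P$ maps each $\Cc^\infty_K(M)$ into some $\Cc^\infty_{K'}(M)$, so $p_x$ vanishes on a neighbourhood of $K$ whenever $x\notin K'$; hence $\supp p\cap(M\times K)$ is a closed subset of the compact set $K'\times K$, hence compact, which is properness of $s$ on $\supp p$, and the symmetric argument with $P^t$ gives properness of $r$. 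Then $(4)\Rightarrow(2)$: semiregularity in the first variable gives the continuous map $P\colon\Cc^\infty(M)\to C^\infty(M)$, $(Pf)(x)=\ang{p_x,f}$; properness of $r$ on $\supp p$ makes each $p_x$ compactly supported, so this extends to $C^\infty(M)\to C^\infty(M)$, while properness of $s$ makes it restrict to $\Cc^\infty(M)\to\Cc^\infty(M)$. Performing the same construction for $P^t$ and dualizing---using $\sD'(M)=(\Cc^\infty(M))'$ and $\sE'(M)=(C^\infty(M))'$---yields the continuous extensions $P\colon\sD'(M)\to\sD'(M)$ and $P\colon\sE'(M)\to\sE'(M)$, all restricting to the original $P$. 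Finally $(2)\Rightarrow(1)$ is immediate: restrict $P$ to $\Cc^\infty(M)$, define its transpose by $\ang{P^tf,g}=\ang{f,Pg}$, and dualize the four mapping properties of $P$ to obtain, in particular, $P^t\colon\Cc^\infty(M)\to\Cc^\infty(M)$.

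The only genuinely fiddly part I anticipate is the bookkeeping of the support conditions---keeping straight which of ``$r$ proper'' and ``$s$ proper'' corresponds to compact support in which slot of $p$, and to which of $P$ and $P^t$ shrinks supports---together with verifying the joint continuity of the four extensions in $(2)$. None of this requires a new idea: it is all classical and contained in \cite[Ch.~51]{Treves:PsiDOs}, so the proof is essentially a careful translation.
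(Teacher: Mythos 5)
Your argument is correct, and it takes the only route available: the paper offers no proof of this proposition at all, simply deferring to the discussion in \cite[Ch.~51]{Treves:PsiDOs}, and your cycle $(1)\Rightarrow(4)\Leftrightarrow(3)$, $(4)\Rightarrow(2)\Rightarrow(1)$ is a faithful reconstruction of exactly that classical material (kernel semiregularity $\leftrightarrow$ mapping into $C^\infty$, Grothendieck/Dieudonn\'e--Schwartz factorization for the support bounds, and duality for the extensions). The support bookkeeping is also the right way around: properness of $s$ on $\supp p$ gives preservation of compact supports, properness of $r$ gives the extension to $C^\infty(M)$, and the symmetric statements for $P^t$ dualize to the $\sE'$ and $\sD'$ extensions.
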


\section{The tangent groupoid of a filtered manifold}
\label{sec:tangent_groupoid}

The construction of the tangent  groupoid of a filtered manifold, which appeared in an earlier preprint of this article, has now been published separately in \cite{VanYun:groupoid}.  
To keep the exposition self-contained we review the key points here, and  refer to \cite{VanYun:groupoid} for details.

\subsection{The $H$-tangent groupoid}
\label{sec:H-tangent_groupoid}

\begin{definition}
\label{def:filtered_manifold}
(\cite{Melin:Lie_filtrations})
A {\em filtered manifold} is a smooth manifold $M$ equipped with a filtration of the tangent bundle $TM$ by vector bundles $M\times\{0\} = H^0 \leq H^1 \leq \cdots \leq H^N = TM$ such that $\Gamma^\infty(H^\bullet)$ is a Lie algebra filtration, i.e.
\[[ \Gamma^\infty(H^i), \Gamma^\infty(H^j) ] \subseteq \Gamma^\infty(H^{i+j}).\]
Here we are using the convention that $H^i = TM$ for $i>N$.  

Sections $X \in \Gamma^\infty(H^i)$ will be referred to as {\em vector fields of $H$-order $\leq i$}, written $\ord_H(X)\leq i$.
\end{definition}

\begin{remark}
Any manifold $M$ can be equipped with the trivial filtration of depth one: $H^1 = TM$.  
With this choice our construction recovers the classical pseudodifferential calculus.
\end{remark}

Given a filtration on $TM$, let $\tHM$ be the associated graded bundle
\[
 \tHM = \bigoplus_{i=1}^n \tHM[i], \qquad \text{where } \tHM[i]:=H^i/H^{i-1}.
\]
We denote the grading maps by 
\[\sigma_i:H^i \to \tHM[i]\]
This notation is chosen to coincide with the principal cosymbol maps; see Section \ref{sec:diff_ops}.  

For vector fields $X\in \Gamma^\infty(H^i)$, $Y\in \Gamma^\infty(H^j)$, and functions $f,g\in C^\infty(M)$, we have
\begin{equation}
\label{eq:Lie_bracket}
  [fX,gY] \equiv fg [X,Y] \mod \Gamma^\infty(H^{i+j-1}).
\end{equation}
This implies that the Lie bracket  of vector fields induces a pointwise bracket on the fibres of $\tHM$.  
Thus, $\tHM$ is a vector bundle over $M$ whose fibres are graded nilpotent Lie algebras.  
The osculating groupoid $T_H M$ equals $\tHM$ as a  vector bundle, and each fibre is equipped with the group law given by the Baker-Campbell-Hausdorff formula.

Algebraically, the $H$-tangent groupoid is the disjoint union
of the osculating groupoid $\THM$ with a family of pair groupoids indexed by $\RR^\times = \RR\setminus\{0\}$:
\begin{equation}
  \label{eq:H-tangent_groupoid}
  \TTHM = \THM \times \{0\} \; \sqcup \; (M\times M) \times \RR^\times.
\end{equation}
Separately, the components $\THM$ and $(M\times M)\times\RR^\times$  have obvious smooth structures.  
The main result of \cite{VanYun:groupoid} is that there is a compatible global smooth structure on $\TTHM$ that makes $\TTHM$ into a Lie groupoid.
For our construction of pseudodifferential operators
a precise description of the smooth structure of $\TTHM$ is important.
We review the details relevant for this paper in the next section, and refer to \cite{VanYun:groupoid} for a full account.

\subsection{Global exponential coordinates}
\label{sec:global_exponential_coordinates}
The graded vector bundle $\tHM = \bigoplus \tHM[i]$ is equipped with a family of \emph{dilations} $(\delta_\lambda)_{\lambda\in\RR}$, which are vector bundle endomorphisms such that $\delta_\lambda$ acts on $\tHM[i]$ by multiplication by $\lambda^i$.  
For $\lambda\neq0$, the dilations $\delta_\lambda$ are Lie algebra automorphisms in each fibre.
Therefore, they also define Lie group automorphisms in the fibres of $\THM$.

Let $\nabla$ be a connection on the vector bundle $\tHM$ that is compatible with the grading, in the sense that
\[ \delta_\lambda \circ \nabla \circ \delta_\lambda^{-1} = \nabla .\]
Equivalently, $\nabla$ is a direct sum of connections on each $\tHM[i]$.

\begin{definition}
 \label{def:splitting}
 A {\em splitting} of $\tHM$ is an isomorphism of vector bundles $\psi:\tHM \to TM$ such that the restriction of $\psi$ to $\tHM[i]$ is right inverse to the grading map $\sigma_i$ for each $i$.
\end{definition}

A choice of splitting $\psi$ allows us to transport the connection $\nabla$ to a connection
$\nabla^\psi = \psi\circ \nabla\circ \psi^{-1}$ on $TM$.
Associated to the  connection on $TM$ is the geometric exponential map $\exp^{\nabla^\psi}:TM \supseteq U \to M$, defined on some neighbourhood $U$ of the zero section.

Let $U\subseteq TM$ be an open neighbourhood of the zero section, such that the map
\begin{equation}
\label{eq:domain_of_injectivity}
 \Exp^{\nabla^\psi} : U \to M\times M; \quad  (x,\xi) \mapsto (x,\exp_x^{\nabla^\psi}(-\xi))
\end{equation}
is well-defined and injective.  
Consider the open neighbourhood
\begin{equation}
\label{eq:dom_inj_2}
  \tilde{\UU} := \{(x,\xi,t)\in \tHM\times\RR \st (x,\psi(\delta_t\xi))\in U\}
\end{equation}
of the zero section in $\tHM\times\RR$.  Then the map  
\begin{align}
\label{eq:EExp}
\EExp^{\psi,\nabla}\;:\;  \tHM \times \RR \; \supseteq \; \tilde{\UU} &\longrightarrow \THM \times\{0\} \;\sqcup\; M\times M \times \RR^\times; \\
  (x,\xi,t) &\longmapsto \begin{cases}
                        (\exp_x^{\nabla^\psi}(x, -\psi(\delta_t \xi)), t) &\text{if }t\neq0,\\
                        (x, \xi, 0)  &\text{if }t=0,
                     \end{cases} 
                     \nonumber
\end{align}
is well-defined and injective.  

In  \cite{VanYun:groupoid} we show that there is a smooth structure on $\TTHM$ such that the map $\EExp^{\psi,\nabla}$ is a diffeomorphism onto its image for any choice of graded connection $\nabla$, splitting $\psi$ and domain of injectivity $U$.
We emphasize that it is crucial that the exponential $\nabla$ is compatible with the grading.

\begin{definition}
 \label{def:global_exponential_coordinates}
 The image of $\tilde{\UU}$ under the exponential map is the set
 \begin{equation}
 \label{eq:exponential_patch}
 \UU = (\THM\times\{0\}) \sqcup (\Exp^{\nabla^\psi}(U) \times \RR^\times).
 \end{equation}
 Such a set will be called a {\em global exponential coordinate patch} for $\TTHM$.  
\end{definition}

These global exponential coordinate patches $\UU$ are open neighbourhoods of both the osculating groupoid $\THM\times\{0\}$ and the object space $\TTHM^{(0)} = M\times \RR$.


\section{Pseudodifferential operators}
\label{sec:PsiDOs}

\subsection{The zoom action on the tangent groupoid}
\label{sec:action}

The tangent groupoid comes  equipped with an action of $\RR^\times_+$.  It was of central importance in \cite{DebSka}.  

\begin{definition}
 \label{def:zoom}
 The {\em zoom action} $\zoom : \RR^\times_+ \to \Aut(\TTHM)$ is the smooth one-parameter family of Lie groupoid automorphisms defined as follows:
\begin{align}
 \label{eq:zoom}
  \zoom_\lambda (x,y,t) &= (x,y,\lambda^{-1}t) && \text{if } (x,y)\in M\times M,~t\neq0,\\
  \zoom_\lambda (x,\xi,0) &= (x, \delta_\lambda (\xi) , 0) && \text{if } (x,\xi) \in \THM,~t=0. \nonumber
\end{align}
\end{definition}

Using exponential coordinates, it is easy to see that the zoom action is smooth.
The zoom action $\tilde\zoom$ of $\RR^\times_+$ on $\tHM \times \RR$ is
\begin{equation}
 \label{eq:zoom_in_coords}
  \tilde\zoom_\lambda (x,\xi,t) = (x, \delta_\lambda (\xi), \lambda^{-1} t),
\end{equation}
which satisfies $\zoom_\lambda \circ \EExp^{\psi,\nabla} = \EExp^{\psi,\nabla} \circ \tilde\zoom_\lambda$.  
The zoom action induces a one-parameter family of automorphisms $(\zoom_{\lambda*})_{\lambda\in\RR^\times_+}$ on the convolution algebra $\sE_{r}'(\TTHM)$,
and preserves the subalgebra $\sE'_{r,s}(\TTHM)$ of proper $r$-fibred distributions, as well as the ideal $\Cp^\infty(\TTHM;\Omega_r)$ of properly supported smooth densities.

\subsection{Definition of pseudodifferential operators}
\label{sec:homogeneous}

We  now give our definition of pseudodifferential operators.  


\begin{definition}
\label{def:PPsiDOs}\label{def:cocycle}
A properly supported $r$-fibred distribution $\PP\in \sE_r'(\TTHM)$ is called {\em essentially homogeneous of weight $m\in\RR$} if 
\begin{equation}
\label{eq:essentially_homogeneous}
 \zoom_{\lambda*}\PP - \lambda^m \PP \in \Cp^\infty(\TTHM;\Omega_r)
   \qquad\qquad \text{for all $\lambda\in\RR^\times_+$.}
\end{equation}
The space of such distributions will be denoted $\PPsi^m_H(M)$.
\end{definition}

For $t\in\RR$, we will denote by $\PP_t$ the restriction of $\PP$ to the fibre $\TTHM|_t$, so
\begin{align*}
 \PP_t &\in \sE'_r(M\times M) \quad\text{for $t\neq0$},\\
 \PP_0 &\in \sE'_r(\THM).
\end{align*}

\begin{definition}
\label{def:PsiDOs}
A Schwartz kernel $P\in\sE_{r}'(M\times M)$ is  an \emph{$H$-pseudodifferential kernel} of order $\leq m$ if $P = \PP_1$ for some $\PP\in\PPsi^m_H(M)$. 
\end{definition}
The set of $H$-pseudodifferential kernels of order $\leq m$ will be denoted $\Psi_H^m(M)$.
We will generally blur the distinction between the kernel $P$ and the associated $H$-pseudodifferential operator $\Op(P)$,
\[
  \Op(P) : C^\infty(M) \to C^\infty(M); \quad
  (\Op(P)f)(x) = \int_{y\in M} P(x,y) f(y).
\]

\subsection{Cocycles}

The smooth differences \eqref{eq:essentially_homogeneous} will appear frequently in the ensuing analysis, so we shall give them a name.

\begin{definition}
The function $F:\RR^\times_+ \to \Cp^\infty(\TTHM;\Omega_r)$ defined by 
\begin{equation}
\label{eq:cocycle_def}
 F:\lambda \mapsto F_\lambda := \lambda^{-m} \zoom_{\lambda*}\PP -\PP
\end{equation}
will be called the {\em cocycle associated to $\PP\in\PPsi^m_H(M)$}.  
\end{definition}

We call $F_\lambda$ a cocycle because 

\begin{equation}
\label{eq:cocycle_identity}
 F_{\lambda_1\lambda_2} = \lambda_1^{-m} \zoom_{\lambda_1*} F_{\lambda_2} + F_{\lambda_1}.
\end{equation}
Our definition of $H$-pseudodifferential operators requires that the family $\PP_{\lambda t}-\lambda^m\PP_t$ is smooth in $t>0$ for fixed $\lambda>0$.
It will  be useful to know that it is a smooth function of $(t,\lambda)$ combined.

\begin{lemma}
 \label{lem:smooth_cocycle}
For any $\PP\in\PPsi_H^m(M)$, the associated cocycle $F : \lambda \mapsto \lambda^{-m}\zoom_{\lambda*}\PP -\PP$ is a smooth function of $\RR^\times_+$ into $\Cp^\infty(\TTHM;\Omega_r)$.  
\end{lemma}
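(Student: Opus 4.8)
The plan is to show that $F\colon \RR^\times_+ \to \Cp^\infty(\TTHM;\Omega_r)$ is smooth by bootstrapping from the hypothesis (built into Definition \ref{def:PPsiDOs}) that for each fixed $\lambda$ the difference $F_\lambda$ is a smooth density, together with the cocycle identity \eqref{eq:cocycle_identity}, which converts the problem of smoothness in $\lambda$ into the already-granted smoothness of $\PP$ and of the zoom action. The key observation is that the map $(x,\xi,t)\mapsto F_\lambda$ can be realized as a genuine smooth family of densities on the $r$-fibres of $\TTHM$, and we only need to check that this family depends smoothly on $\lambda$ jointly. Concretely, I would work in a global exponential coordinate patch $\UU$ of Definition \ref{def:global_exponential_coordinates}, where $\TTHM$ is identified with an open subset of $\tHM\times\RR$ and the zoom action becomes the explicit dilation $\tilde\zoom_\lambda(x,\xi,t)=(x,\delta_\lambda\xi,\lambda^{-1}t)$ of \eqref{eq:zoom_in_coords}; in these coordinates the push-forward $\zoom_{\lambda*}$ of an $r$-fibred distribution and its action on densities is given by an explicit change-of-variables formula with a Jacobian factor that is a smooth (indeed polynomial in $\lambda$, up to the $\lambda^{-1}$ from the $t$-direction) function of $\lambda$.

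The main steps, in order, are as follows. First I would reduce to checking joint smoothness of $(\lambda,q)\mapsto F_\lambda(q)$, where $q$ ranges over $\TTHM$ and $F_\lambda$ is viewed via the smooth family $(F_{\lambda,x})_{x}$ of densities on the $r$-fibres $(\TTHM)^x$; it suffices to prove smoothness near an arbitrary $\lambda_0\in\RR^\times_+$. Second, using the cocycle identity \eqref{eq:cocycle_identity} in the form $F_\lambda = F_{\lambda_0} + \lambda_0^{-m}\zoom_{\lambda_0 *}F_{\lambda\lambda_0^{-1}}$, I would reduce to proving smoothness of $\lambda\mapsto F_\lambda$ in a neighbourhood of $\lambda=1$; here $F_1=0$, and one needs the map to be $C^\infty$ (not merely continuous) at the identity. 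Third, I would write, for $\lambda$ near $1$,
\[
 F_\lambda = \lambda^{-m}\zoom_{\lambda*}\PP - \PP,
\]
and argue that although $\zoom_{\lambda*}\PP$ is only an $r$-fibred distribution, its \emph{difference} with $\PP$ is a smooth density by hypothesis; the content is to see that this difference is smooth \emph{in the family parameter $\lambda$}. For this I would use the explicit coordinate description: the zoom action on $\sE'_r$ in the patch $\UU$ intertwines with pull-back along the diffeomorphism $\tilde\zoom_\lambda$, so $\zoom_{\lambda*}\PP$ restricted to any $r$-fibre is the image of $\PP$ under a $\lambda$-dependent family of diffeomorphisms of that fibre depending smoothly on $\lambda$, and pairing against a test function gives a family of smooth functions of $\lambda$ by continuity of $\PP$ and smooth dependence of the test functions $\gamma\mapsto \varphi(\tilde\zoom_\lambda\gamma)$ on $\lambda$ in the $C^\infty$ topology. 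Finally, I would invoke a standard fact (e.g.\ from \cite{Treves:PsiDOs}, or the closed-graph / uniform boundedness machinery for nuclear Fréchet spaces, cf.\ Example \ref{ex:semiregular_Schwartz_kernels}) that a family of distributions which is smooth in a parameter and which happens to take values in the subspace of smooth densities is automatically smooth \emph{as a family of smooth densities}, i.e.\ smoothness in the stronger $\Cp^\infty(\TTHM;\Omega_r)$-topology follows from smoothness in the $\sE'_r$-topology once one knows pointwise containment in the ideal.

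The step I expect to be the main obstacle is precisely that last point: upgrading ``smooth $\RR^\times_+$-family of $r$-fibred distributions, each of which is a smooth density'' to ``smooth $\RR^\times_+$-family of smooth densities.'' The subtlety is that the $\Cp^\infty(\TTHM;\Omega_r)$-topology is strictly finer than the subspace topology from $\sE'_r(\TTHM)$, so joint continuity/smoothness does not transfer for free; one needs either a Banach--Steinhaus argument exploiting that $\sE'_r(\TTHM)$ and $\Cp^\infty(\TTHM;\Omega_r)$ are both (LF)- or Fréchet-type spaces with the inclusion continuous and the family locally bounded, or an explicit estimate showing that the $C^k$-seminorms of $F_\lambda$ (as a density) are locally bounded in $\lambda$ together with the already-established weak smoothness, whence Sobolev-type interpolation gives $C^\infty$ dependence. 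Everything else — the coordinate computation of $\zoom_{\lambda*}$, the reduction to $\lambda$ near $1$ via the cocycle identity, and smooth dependence of the pulled-back test functions — is routine once the functional-analytic framework of Section \ref{section:conv} is in place.
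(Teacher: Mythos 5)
There is a genuine gap, and it sits exactly where you suspected. Your final step rests on the claim that a family $\lambda\mapsto F_\lambda$ which is smooth into $\sE'_r(\TTHM)$ and happens to take values in $\Cp^\infty(\TTHM;\Omega_r)$ is automatically smooth into $\Cp^\infty(\TTHM;\Omega_r)$. That statement is false in general: take smooth compactly supported functions $g_n$ with $\|g_n\|_{C^0}=1$ but $g_n\to0$ rapidly in $\sD'$ (e.g.\ $g_n(x)=\chi(x)\sin(nx)$) and glue them along disjoint parameter intervals accumulating at a point; the resulting family is smooth into distributions, valued in $C^\infty$, yet not even continuous into $C^0$. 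So neither of your two suggested repairs closes the gap as stated: Banach--Steinhaus needs the family to be locally bounded in the $C^k$-topologies, and that local boundedness is precisely what has to be \emph{produced} from the hypothesis that each individual $F_\lambda$ is smooth --- it is not given. The paper extracts it by a Baire category argument (the sets $\{\lambda:\|F_\lambda\|_{C^k}\le n\}$ cover $\RR^\times_+$ and one of them has interior), combined with Arzela--Ascoli to get continuity into $C^{k-1}$ on \emph{some} interval, and then --- crucially --- uses the cocycle identity \eqref{eq:cocycle_identity} to transport that unknown good interval to all of $\RR^\times_+$. Your use of the cocycle identity (merely to recentre at $\lambda=1$) does not substitute for this, because without Baire you have no interval on which boundedness holds to begin with.

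The second, smaller omission is that even after continuity of $F$ into $\Cp^\infty(\TTHM;\Omega_r)$ is established, smoothness does not follow by iterating the same argument: the $\lambda$-derivative of $F$ is a distributional limit of smooth densities and is not obviously a smooth density itself, so the Baire step cannot simply be repeated. The paper handles this with a separate mollification trick on the group $\RR^\times_+$: setting $\QQ=\int\phi(t)\,t^{-m}\zoom_{t*}\PP\,\frac{dt}{t}$ for a bump function $\phi$, one has $\QQ-\PP\in\Cp^\infty(\TTHM;\Omega_r)$ by the continuity already proved, while $\lambda^{-m}\zoom_{\lambda*}\QQ-\PP=\int\phi(\lambda^{-1}t)F_t\,\frac{dt}{t}$ is visibly smooth in $\lambda$ by differentiation under the integral; writing $F_\lambda$ as the sum of these two pieces gives smoothness. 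Your reduction to exponential coordinates and the computation of $\zoom_{\lambda*}$ are fine but are not where the content of the lemma lies.
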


\begin{proof}
Without loss of generality, we may assume that $M$ is compact.  The cocycle $F$ is certainly smooth as a map $\RR^\times_+ \to \sE'_r(\TTHM)$.  For any $k\geq1$, an application of the Arzela-Ascoli and Baire Category Theorems shows that,  on some open interval $(a,b)$, $F$ is bounded as a map into $C^k(\TTHM;\Omega_r)$ and so continuous into $C^{k-1}(\TTHM;\Omega_r)$ on $(a,b)$.  But the cocycle identity \eqref{eq:cocycle_identity} shows that the domain of continuity of $F$ into $C^{k-1}(\TTHM;\Omega_r)$ is $\RR^\times_+$-invariant.  Therefore $F:\RR^\times_+ \to C^\infty(\TTHM;\Omega_r)$ is continuous.

To prove that $F$ is {\em smooth}, 
we use a convolution trick on the multiplicative group $\RR^\times_+$.
Pick a compactly supported bump function $\phi \in \Cc^\infty(\RR^\times_+)$ with $\int_{\RR^\times_+} \phi(t) \frac{dt}{t} = 1$, and define $\QQ\in\sE'(\TTHM;\Omega_r)$ as an average of translates of $\PP$,
\[
  \QQ := \int_{\RR^\times_+} \phi(t) \, t^{-m} \alpha_{t*} \PP\,  \frac{dt}{t}.
\]
Then $ \QQ - \PP = \int_0^\infty \phi(t) \, F_t \, \frac{dt}{t} \in C^\infty(\TTHM;\Omega_s)$, thanks to the continuity of $F:\RR^\times_+ \to C^\infty(\TTHM;\Omega_r)$.
Also,
\begin{align*}
  (\lambda^{-m} \alpha_{\lambda*} \QQ) - \PP &= \int_0^\infty \phi(t) \,((\lambda t)^{-m} \alpha_{\lambda t*} \PP -\PP) \frac{dt}{t}\\
    &= \int_0^\infty \phi(\lambda ^{-1}t)\, F_t \frac{dt}{t},
\end{align*}
and by differentiation under the integral we see that this is a smooth function of $\lambda\in\RR^\times_+$ into $C^\infty(\TTHM;\Omega_s)$.  So, by writing 
\[
 F_\lambda = \lambda^{-m}\alpha_{\lambda*}(\PP-\QQ) + (\lambda^{-m} \alpha_{\lambda*} \QQ - \PP),
\]
we obtain that $F$ is smooth.
\end{proof}

\subsection{Pseudolocality}

Our definition of $\PPsi^m_H(M)$ forces the singular support of an $H$-pseudodifferential kernel to lie on the diagonal.

\begin{proposition}
\label{prop:smooth_off_the_units}
For every $\PP \in \PPsi^m_H(M)$ the restriction to $\TTHM \setminus \TTHM^{(0)}$ is a smooth density.  Hence every $P \in \Psi_H^m(M)$ is smooth off the diagonal.
\end{proposition}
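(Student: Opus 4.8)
The plan is to exploit the zoom action to push any point of $\TTHM \setminus \TTHM^{(0)}$ into the region $t \neq 0$, where we can invoke Proposition \ref{prop:smooth_off_the_units}'s... wait — actually we cannot invoke the conclusion we're trying to prove. Let me restate the real strategy.

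\medskip

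The key point is that the zoom action $\zoom_\lambda$ is transitive, in the relevant sense, on the set $\{t \neq 0\}$: for any $t_0 \neq 0$ we have $\zoom_{t_0}(x,y,1) = (x,y,t_0^{-1} \cdot 1)$... more precisely $\zoom_\lambda$ maps the fibre at $t$ diffeomorphically onto the fibre at $\lambda^{-1}t$, and all nonzero fibres $M \times M \times \{t\}$ are thus carried into one another. So the plan is: first show $\PP$ restricted to $M\times M\times\{1\}$, i.e.\ $P = \PP_1$, is smooth off the diagonal; then transport this along the zoom action to conclude $\PP$ is smooth on all of $M\times M\times\RR^\times = \TTHM\setminus(\THM\times\{0\}\cup\TTHM^{(0)})$; and finally deal with the osculating part $\THM\times\{0\}$ minus its units separately. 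Hmm, but actually the osculating groupoid is part of $\TTHM^{(0)}$'s closure, not of it — let me reconsider: $\TTHM^{(0)} = M\times\RR$ sits inside $\TTHM$, and $\THM\times\{0\}$ is a whole bundle of groups sitting over $M\times\{0\}$; the units inside $\THM$ are the zero section. So "smooth off the units" genuinely requires handling the non-identity elements of the osculating groupoid too. The cleanest approach: work in a global exponential coordinate patch $\UU$ (Definition \ref{def:global_exponential_coordinates}) around a point of $\TTHM\setminus\TTHM^{(0)}$, where $\PP$ is a smooth family of distributions on $\tHM$-fibres parametrized by $t$, and use the essential homogeneity.

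\medskip

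Here are the steps I would carry out. \emph{Step 1.} Reduce to the case $t\neq 0$: take $\PP\in\PPsi^m_H(M)$ with cocycle $F_\lambda = \lambda^{-m}\zoom_{\lambda*}\PP - \PP \in \Cp^\infty(\TTHM;\Omega_r)$. For a point $\gamma$ with $t(\gamma) = t_0 \neq 0$, choose $\lambda$ with $\lambda^{-1}t_0 = 1$; then $\zoom_{\lambda*}\PP = \lambda^m\PP + \lambda^m F_\lambda$, so near $\zoom_\lambda(\gamma)$ (which lies over $t=1$) $\PP$ is smooth iff it is smooth near $\gamma$, up to the smooth correction $F_\lambda$. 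Hence it suffices to prove smoothness of $\PP$ in a neighbourhood of an arbitrary point $(x_0,y_0,1)$ with $x_0 \neq y_0$. \emph{Step 2.} Fix such a point and let $\lambda$ range over a small interval around $1$, or better, over all of $\RR^\times_+$: iterating the cocycle relation, for every $\lambda$, $\zoom_{\lambda*}\PP$ agrees with $\lambda^m\PP$ modulo $\Cp^\infty$. Restricting to fibres: $\PP_{\lambda^{-1}} = \lambda^m\PP_1 + (\text{smooth density on }M\times M)$ evaluated appropriately — so knowing $\PP_t$ for $t$ in one punctured neighbourhood of $0$, say $t\in(0,\epsilon)$, determines $\PP_1$ modulo smoothing and conversely. \emph{Step 3.} Now pass to a global exponential coordinate patch. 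Near $(x_0,y_0)$ with $x_0\neq y_0$ the coordinate $\xi = -\psi^{-1}(\delta_t^{-1}\exp^{-1}_{x_0}(y_0))$ blows up as $t\to 0$ (it scales like $\delta_{t^{-1}}$ applied to a nonzero vector), so the point $(x_0,y_0,1)$ corresponds in $\tHM\times\RR$-coordinates to $(x_0,\xi_0,1)$ with $\xi_0\neq 0$, bounded away from the zero section. By \eqref{eq:zoom_in_coords}, $\tilde\zoom_\lambda(x_0,\xi_0,1) = (x_0,\delta_\lambda\xi_0,\lambda^{-1})$. \emph{Step 4.} Express essential homogeneity in these coordinates and feed in Lemma \ref{lem:smooth_cocycle}: the cocycle $F$ is jointly smooth in $(\lambda,\cdot)$ into $\Cp^\infty(\TTHM;\Omega_r)$. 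Therefore $\lambda\mapsto \lambda^{-m}\zoom_{\lambda*}\PP$ is a smooth curve into $\sE'_r(\TTHM)$ whose all derivatives, evaluated via the cocycle, land in smooth densities. Integrating this curve against a bump function in $\lambda$ (the same convolution trick as in Lemma \ref{lem:smooth_cocycle}) produces an average $\QQ$ which differs from $\PP$ by a smooth density \emph{and} which, near $(x_0,\xi_0,1)$, is manifestly smooth because averaging over $\lambda$ integrates over a neighbourhood of the blown-up coordinate $\xi$ — the wavefront set of $\PP$ at such a point, being conic and invariant under $\delta_\lambda$-scaling in $\xi$, must be empty unless it meets the zero section $\xi = 0$, which it does not at $(x_0,\xi_0)$.

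\medskip

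\emph{The main obstacle.} The genuinely delicate step is Step 4: making precise the claim that essential homogeneity — homogeneity only \emph{modulo} smooth densities — forces $\PP$ to be smooth at points where the zoom-orbit stays bounded away from the zero section, i.e.\ away from $\TTHM^{(0)}$. The naive argument "$\PP$ is homogeneous, hence its wavefront set is conic, hence trivial off the units" fails because the error term $F_\lambda$ is not zero; one must show that the $\Cp^\infty$-valued cocycle cannot conspire to create a singularity that is itself "homogeneous modulo smooth." I expect the right way to control this is exactly the convolution-smoothing trick from the proof of Lemma \ref{lem:smooth_cocycle}: replacing $\PP$ by $\QQ = \int \phi(\lambda)\,\lambda^{-m}\zoom_{\lambda*}\PP\,\frac{d\lambda}{\lambda}$ changes $\PP$ only by a smooth density, but the resulting $\QQ$ is an honest average over the zoom action and is therefore smooth wherever the orbit map is submersive and bounded off the zero section — which is precisely $\TTHM\setminus\TTHM^{(0)}$. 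The remaining bookkeeping (checking the orbit map $\lambda\mapsto\tilde\zoom_\lambda(x_0,\xi_0,1)$ is an immersion transverse to the $r$-fibre foliation near any $\xi_0\neq 0$, and that properness of supports is preserved) is routine. The statement for $P\in\Psi^m_H(M)$ then follows by taking $t=1$: $P = \PP_1$ is the restriction of a density that is smooth on $M\times M\times\{1\}$ away from the diagonal, whence $\singsupp(P)\subseteq\{x=y\}$.
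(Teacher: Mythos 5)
The step you yourself flag as the crux, Step~4, does not work as stated. The average $\QQ = \int \phi(\lambda)\,\lambda^{-m}\zoom_{\lambda*}\PP\,\frac{d\lambda}{\lambda}$ differs from $\PP$ by $\int\phi(\lambda)\,F_\lambda\,\frac{d\lambda}{\lambda}\in\Cp^\infty(\TTHM;\Omega_r)$, so it has \emph{exactly the same} singular support as $\PP$; it cannot be ``manifestly smooth'' near $(x_0,\xi_0,1)$ unless $\PP$ already was. More generally, averaging over a one-parameter group regularizes only along the one-dimensional orbit direction and does nothing transverse to it: the orbit map $\lambda\mapsto\zoom_\lambda(\gamma)$ is an immersion, not a submersion, so ``smooth wherever the orbit map is submersive'' is vacuous in this setting. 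Your closing wavefront remark is likewise incomplete: a closed, dilation-invariant subset of $\THM\setminus\TTHM^{(0)}$ need not be empty --- a single ray $\{(x_0,\delta_\lambda\xi_0,\lambda^{-1}):\lambda>0\}$ is dilation-invariant and misses the zero section --- so invariance alone does not finish the argument; you need to bring in the properness of $\supp(\PP)$, which you relegate to ``routine bookkeeping.''

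The irony is that the worry driving all of this --- that the cocycle might ``conspire to create a singularity that is homogeneous modulo smooth'' --- is a non-issue, and once that is seen the proof collapses to three lines, which is what the paper does. Since $\zoom_{\lambda*}\PP = \lambda^m\PP + \lambda^m F_\lambda$ with $F_\lambda$ a smooth density, one has $\singsupp(\zoom_{\lambda*}\PP)=\singsupp(\PP)$; that is, $\singsupp(\PP)$ is \emph{exactly} invariant under the zoom action (smooth perturbations never move singular supports). It is also a closed subset of $\supp(\PP)$, which is proper in the sense of Definition~\ref{def:proper_subset} because $\PP$ is properly supported by Definition~\ref{def:PPsiDOs}, and closed subsets of proper sets are proper. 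But the zoom orbit of any $\gamma\notin\TTHM^{(0)}$ is \emph{not} a proper subset: for $\gamma=(x,y,t)$ with $x\neq y$, $t\neq0$, the points $\zoom_\lambda(\gamma)=(x,y,\lambda^{-1}t)$ leave every compact subset of $\TTHM$ as $\lambda\to\infty$ (in exponential coordinates the fibre variable is $\delta_{\lambda/t}$ of a fixed nonzero vector) while their $r$-images $(x,\lambda^{-1}t)$ remain in a fixed compact set; the same happens for $\gamma=(x,\xi,0)$ with $\xi\neq0$. Hence no such $\gamma$ can lie in $\singsupp(\PP)$, and $\PP$ is a smooth density off $\TTHM^{(0)}$; restricting to $t=1$ gives the statement for $P=\PP_1$. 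Your Steps~1--3 and the exponential-coordinate reductions are not needed for this.
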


\begin{proof}
The singular support of $\PP \in \PPsi^m_H(M)$ is invariant under the zoom action.  If $\gamma \in \TTHM \setminus \TTHM^{(0)}$ then its orbit under the zoom action is not proper.  Since $\PP$ has proper support by definition, the result follows. 
\end{proof}

\begin{remark}
In Section \ref{sec:conormality}, we will see that the definition also forces $H$-pseudodifferential kernels to be \emph{proper} fibred distributions, so they satisfy the equivalent conditions of Proposition \ref{prop:regular}.  
\end{remark}

\subsection{Examples}

\begin{example}
\label{ex:diff_op}
Returning to the example  from Section \ref{sec:outline}, we verify that differential operators are indeed in our calculus.  Here we will deal with the case of trivially filtered manifolds with $H^1=TM$.
For general filtered manifolds, see Section \ref{sec:diff_ops}.

Let $D$ be a differential operator $D$ of order $m$ on the (trivially filtered) manifold $M$ and let $P\in\sE'_r(M\times M)$ be its Schwartz kernel.  We define a family of $r$-fibred distributions $P_t$ on $\TTM|_t$ by
 \begin{equation}
 \label{eq:DO_family}
  \PP_t = \begin{cases}
           t^m P, & t\neq 0,\\
           \sigma_m(P), & t=0,
          \end{cases}
 \end{equation}
 where $\sigma_m(P)_x \in \sE'_r(T_xM)$ is the convolution kernel of the principal part of $D$, frozen at $x\in M$.  Since at $t\neq0$, the zoom action $\alpha_\lambda$ simply shifts the fibre $\TTHM|_t$ to $\TTHM|_{\lambda^{-1}t}$, $\PP|_{\RR^\times}$ is homogeneous on the nose of order $m$.  Also, $\sigma_m(P)_x$ is an order $m$ derivative of the Dirac distribution at $0\in T_xM$, so $\PP|_0$ is homogeneous of order%
\footnote{
There is a discrepancy here with the analysts' usual notion of order for derivatives of Dirac distributions, in which an order $m$ derivative of $\delta$ on $\RR^n$ has order $-n-m$.
The difference occurs because we are treating the Dirac distribution as a generalized \emph{density}, while analysts often use $\delta$ to refer to a generalized \emph{function}.  In that convention, the distribution we refer to as $\delta$ would instead correspond to $\delta\,|dx|$, where $|dx|$ denotes Lebesgue measure.  Since Lebesgue measure is homogeneous of order $n$, there is a shift of $n$ between the two conventions.  Moreover, we are defining homogeneity of distributions with respect to push-forward by dilations, not pull-back as one does for generalized functions, which accounts for the sign difference. 
} 
$m$.  The only nontrivial point to prove is that $\PP$ is a \emph{smooth} family, so defines an element of $\sE'_r(\TTHM)$.  We now prove this by explicit computation. A more conceptual explanation is given in Section \ref{sec:diff_ops}.
 
The Schwartz kernel of $D$ is 
\begin{equation}
\label{eq:DO_family1}
 P(x,y) = D^t\delta_x (y) ,
\end{equation}
as one sees from the calculation 
\[
\int_{y\in M} D^t\delta_x(y)\, f(y) = \int_{y\in M} \delta_x(y)\, Df(y) = Df(x).
\]
Let us write $D^t$ in coordinates as
\[
 D^t=\sum_{|\bfa|\leq m} c_\bfa(x) \frac{\partial^\bfa}{\partial x^\bfa},
\]
where $\bfa = (a_1,\ldots,a_n) \in \NN^n$ are multi-indices and $c_\bfa(x)$ are smooth coefficient functions.  Then 
\[
 P(x,y) = \sum_{|\bfa|\leq m} c_\bfa(x) \delta_x^{(\bfa)}(y),
\]
where $\delta_x^{(\bfa)}$ denotes the $\bfa$-derivative of the Dirac distribution supported at $x$.

The principal part of $D$, frozen at $x$, is the differential operator on $T_xM$ given by
\[
 D_x = \left( \sum_{|\bfa|= m} c_\bfa(x) \frac{\partial^\bfa}{\partial \xi^\bfa} \right)^t 
    = \sum_{|\bfa|= m} (-1)^m c_\bfa(x) \frac{\partial^\bfa}{\partial \xi^\bfa},
\]
where $\xi$ denotes the canonical coordinates on $T_x\RR^n \cong \RR^n$.   It is given by convolution with the distribution
\[
 \sigma_m(P)(x,\xi) := \sum_{|\bfa|= m} (-1)^m c_\bfa(x) \delta_0^{(\bfa)}(\xi) 
   = \sum_{|\bfa|= m} c_\bfa(x) \delta_0^{(\bfa)}(-\xi),
\]
where $\delta_0^{(\bfa)}$ denotes the $\bfa$-partial derivative of $\delta_0$

Using the standard connection $\nabla$ on $\RR^n$ and splitting $\psi=\id$, the global exponential map of Section \ref{sec:global_exponential_coordinates} becomes
\begin{align}
\label{eq:Rn_exponential}
  \EExp^{\nabla}  : T(\RR^n) \times \RR &\stackrel{\cong}{\longrightarrow} \TT(\RR^n) \\
  (x,\xi,t) &\longmapsto  (x, x-t \xi, t) &&\text{if }t\neq0, \nonumber \\
  (x,\xi,0) &\longmapsto  (x, \xi, 0)  &&\text{if }t=0,                     
  \nonumber
\end{align}
Writing $\PP_t$ in these coordinates (for $t\neq0$) yields the distribution
\[
 \PP_t(x,\xi) =  t^m \sum_{|\bfa|\leq m} c_\bfa(x) \delta^{(\bfa)}(-t\xi) 
   = \sum_{|\bfa|\leq m} t^{m-|\bfa|} c_\bfa(x) \delta^{(\bfa)}(-\xi)
\]
on $T(\RR^n) \times \{t\}$.  As claimed, this extends to a smooth family of distributions on $TM\times\RR$ with $\sigma_m(P)(x,\xi)$ at $t=0$.
Thus $P\in \Psi^m_H(M)$.
\end{example}

\begin{example}
\label{ex:identity}
On any filtered manifold, the identity operator $I:C^\infty(M) \to C^\infty(M)$ is an $H$-pseudodifferential operator of order $0$. 
An extension $\II\in\PPsi^0_H(M)$ is the $r$-fibred distribution $\II \in \sE'_r(\TTHM)$ given by restriction to the unit space: $(\II,f) = f|_{M\times\RR}$.

More generally, multiplication by a smooth function $a\in C^\infty(M)$ is $H$-pseudodifferential of order $0$.  An extension is given by $\mathbbl{a}\II\in\PPsi^0_H(M)$, where $\mathbbl{a}\in C^\infty(M\times \RR)$ is defined by $\mathbbl{a}(x,t) = a(x)$.
\end{example}

\begin{example}
\label{ex:logarithmic_singularity}

Typically, in a kernel-based approach to pseudodifferential calculi, logarithmic singularities need to be included explicitly in the asymptotic expansion (e.g.\ \cite{ChrGelGloPol}). 
One of the advantages of the present approach is that they enter naturally. Compare also \cite[Section I.2]{Taylor:microlocal}.

To illustrate this by an example, let $M= \RR^n$ with the trivial filtration.  
Using  the exponential coordinate system \eqref{eq:Rn_exponential}, consider the fibred distribution $\PP\in \sE'_r(M)$ whose pull-back $\PPtilde:= \PP\circ \EExp^\nabla$ is
 \[
  \PPtilde(x,\xi,t) = \log|\xi|\,d\xi.
 \]
 In standard coordinates,
 \begin{align*}
  \PP(x,y,t) & = \log (|t|^{-1}|x-y|) \,dy, &\qquad& \text{if }t\neq 0, \\
  \PP(x,\xi,t) & = \log |\xi| \, d\xi, &\qquad& \text{if }t\neq 0. 
 \end{align*}
Then
 \[
  \alpha_{\lambda*}\PPtilde(x,\xi,t) = \log|\lambda^{-1}\xi|\,d(\lambda^{-1}\xi) 
   = \lambda^{-n}(\log|\xi|-\log|\lambda|)\,d\xi,
 \]
 so
 \[
  (\alpha_{\lambda*}\PPtilde - \lambda^{-n}\PPtilde)(x,\xi,t) = -\lambda^{-n}\log|\lambda|\,d\xi
 \]
 which is a smooth density on $T(\RR^n)\times\RR$ for any $\lambda\in\RR^\times_+$.  Thus $\PP$ is essentially homogeneous
of weight $-n$.  It is not properly supported, but one can easily resolve this by multiplying $\PP$ by an exponential cut-off function (see the next section).  Thus, after a properly supported cut-off, $\PP_1(x,y) = \log|x-y|\,dy$ becomes an $H$-pseudodifferential kernel.
\end{example}


\section{Local exponential coordinates}

Our definition of $H$-pseudodifferential operators is coordinate independent.
Still, in order to prove  various properties of these operators, we need coordinates on $\TTHM$.
In this section we introduce a convenient set of coordinates, and fix notation for the rest of the paper.

\subsection{Exponential cut-offs}
\label{sec:cut-offs}

The essential data of an $H$-pseudodifferential operator is carried on a neighbourhood of the unit space $\TTHM^{(0)} = M\times \RR$ and the osculating groupoid $\TTHM|_0 = \THM$ inside $\TTHM$.  This can be made precise as follows.

We assume that we have fixed, once and for all, a choice of graded connection $\nabla$, splitting $\psi$ and domain of injectivity $U\subseteq \tHM$ as in Section \ref{sec:global_exponential_coordinates}.
Recall from Section \ref{sec:global_exponential_coordinates} that the global exponential coordinate patch
  \[
   \UU := (\THM \times \{0\}) \:\sqcup\: (\Exp^{\nabla^\psi}(U) \times \RR^\times) \quad \subseteq \TTHM.
  \]
 is an open neighbourhood of $\TTHM^{(0)} \cup \TTHM|_0$.
\begin{lemma}
\label{lem:exponential_support}

For any $\PP\in\PPsi_H^m(M)$ one can find $\PP'\in\PPsi^m_H(M)$ that is supported in $\UU$ and such that $\PP-\PP' \in \Cp^\infty(\TTHM;\Omega_r)$.
\end{lemma}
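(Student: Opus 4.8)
The idea is to multiply $\PP$ by a smooth cut-off function that is supported in $\UU$ and equal to $1$ on a neighbourhood of the part of $\TTHM$ that actually carries the homogeneity, namely the unit space $\TTHM^{(0)} = M\times\RR$ and the osculating groupoid $\TTHM|_0=\THM$. First I would choose $\chi\in C^\infty(\TTHM)$ such that $0\le\chi\le1$, $\supp(\chi)\subseteq\UU$, $\chi\equiv1$ on an open neighbourhood $\VV$ of $\TTHM^{(0)}\cup\TTHM|_0$, and — crucially — such that $\chi$ is invariant under the zoom action, $\chi\circ\zoom_\lambda=\chi$ for all $\lambda\in\RR^\times_+$. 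Such a $\chi$ exists: in the exponential coordinates of \eqref{eq:EExp} the zoom action is $\tilde\zoom_\lambda(x,\xi,t)=(x,\delta_\lambda\xi,\lambda^{-1}t)$, so one can take $\chi$ to be a function of the ``cone-invariant'' data only, built by cutting off a fixed norm $\|\delta_{|t|}\xi\|$ on $\tHM$; concretely one copies the construction from \cite{VanYun:groupoid} of $\UU$ itself, making $U$ a zoom-invariant cone neighbourhood of the zero section. Then set $\PP':=\chi\PP$.

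Next I would check the three required properties. First, $\supp(\PP')\subseteq\supp(\chi)\subseteq\UU$, and $\PP'$ is still properly supported since $\supp(\PP')\subseteq\supp(\PP)$ which is proper. Second, $\PP-\PP'=(1-\chi)\PP$ is supported in $\TTHM\setminus\VV$, which is disjoint from $\TTHM^{(0)}\cup\TTHM|_0$, hence contained in $\TTHM\setminus\TTHM^{(0)}$; by Proposition \ref{prop:smooth_off_the_units} the restriction of $\PP$ to $\TTHM\setminus\TTHM^{(0)}$ is a smooth density, so $(1-\chi)\PP$ is a smooth density, and it is properly supported, giving $\PP-\PP'\in\Cp^\infty(\TTHM;\Omega_r)$. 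Third, I must show $\PP'\in\PPsi^m_H(M)$, i.e.\ $\zoom_{\lambda*}\PP'-\lambda^m\PP'\in\Cp^\infty(\TTHM;\Omega_r)$. Here the zoom-invariance of $\chi$ is what does the work: since multiplication by $\chi$ commutes with $\zoom_{\lambda*}$ (because $\chi\circ\zoom_{\lambda^{-1}}=\chi$), we get
\begin{equation*}
 \zoom_{\lambda*}\PP'-\lambda^m\PP' = \zoom_{\lambda*}(\chi\PP)-\lambda^m\chi\PP = \chi\,(\zoom_{\lambda*}\PP-\lambda^m\PP),
\end{equation*}
and the right-hand side is $\chi$ times a smooth density (by Definition \ref{def:PPsiDOs}), hence a smooth density, and it is properly supported. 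Therefore $\PP'$ is essentially homogeneous of weight $m$.

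The one point requiring a little care — and what I regard as the main obstacle — is the construction of the zoom-invariant cut-off $\chi$ with support inside a given exponential coordinate patch $\UU$: one needs $\supp(\chi)$ to sit inside $\Exp^{\nabla^\psi}(U)\times\RR^\times$ away from $t=0$ while also controlling behaviour as $t\to0$, which is exactly the kind of estimate that makes $\UU$ itself a neighbourhood of $\THM\times\{0\}$. The cleanest route is to perform the construction upstairs on $\tHM\times\RR$, where the zoom action is the linear dilation $\tilde\zoom_\lambda$, pick a zoom-invariant open cone $\tilde\VV\subseteq\tilde{\UU}$ about the zero section and a zoom-invariant $\tilde\chi$ supported in $\tilde{\UU}$ with $\tilde\chi\equiv1$ on $\tilde\VV$, and push forward by the diffeomorphism $\EExp^{\psi,\nabla}$, extending by $0$ outside $\UU$; the identity $\zoom_\lambda\circ\EExp^{\psi,\nabla}=\EExp^{\psi,\nabla}\circ\tilde\zoom_\lambda$ guarantees the pushed-forward function is zoom-invariant, and smoothness of the extension by $0$ follows because $\supp(\tilde\chi)$ is a proper (indeed ``cone-properly'') subset of $\tilde{\UU}$. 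Once $\chi$ is in hand, the rest is the short verification above.
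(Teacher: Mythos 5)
Your proposal is correct and is essentially the paper's own argument: the paper also multiplies $\PP$ by a zoom-invariant cut-off supported in $\UU$ and equal to $1$ near $\TTHM^{(0)}\cup\TTHM|_0$ (built from a $t$-independent bump function $\varphi_1$ on $M\times M$ around the diagonal, which pulled back through $\EExp^{\psi,\nabla}$ is exactly a function of $(x,\delta_t\xi)$, i.e.\ your cone-invariant cut-off), and concludes by the same invariance computation together with Proposition \ref{prop:smooth_off_the_units}.
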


\begin{proof}
Let $\varphi_1 \in C^\infty(M\times M)$ be a smooth bump function that  equals $1$ on a neighbourhood of the diagonal and zero outside  $U$.  The smooth function $\varphi$ on $\TTHM$ defined by
\begin{equation}
\label{eq:exponential_cut-off}
 \begin{cases}
   \varphi(x,y,t) = \varphi_1(x,y), &\quad \text{if }t\neq0, \\
   \varphi(x,\xi,0) = 1     &\quad \text{if }t=0.
 \end{cases}
\end{equation}
is constant $1$ on a neighbourhood of $\TTHM^{(0)} \cup \TTHM|_0$, and $0$ outside of $\UU$.  It is also invariant under the zoom action.   Therefore, putting $\PP' := \varphi \PP$ we obtain the result.
\end{proof} 
 
Since elements of $\Cp^\infty(\TTHM;\Omega_r)$ are trivially in $\PPsi^m_H(M)$ for any $m$, this lemma will allow us to routinely assume that $\PP$ is supported in an exponential coordinate patch.

\begin{definition}
\label{def:global_pullback}
If $\PP\in\PPsi^m_H(M)$ is supported in the global exponential coordinate chart $\UU$, as in Lemma \ref{lem:exponential_support}, we  write $\widetilde\PP\in\sE_r'(\tHM\times\RR)$ for the pullback of $\PP$ via $\EExp^{\psi,\nabla}$.
\end{definition}


\subsection{Local exponential coordinates}
\label{sec:charts}

Let $\RR^n \supset M_0 \into M$ be a coordinate chart for $M$ with compact closure upon which all of the bundles $H^i$ are trivializable.  We identify $M_0$ with its image in $M$.  
We thus have a trivialization of graded vector bundles
\begin{equation}
\label{eq:tHM_trivialization}
 M_0 \times V \xrightarrow{\;\cong\;} \tHM|_{M_0},
\end{equation}
where $V$ is a graded vector space.  

The exponential map $\EExp^{\psi,\nabla}$  from Section \ref{sec:global_exponential_coordinates} defines a diffeomorphism from an open set $\tilde{\UU} \subseteq \tHM\times\RR$ to an open set $\UU \subseteq \TTHM$.  Let us write $\VV \subseteq M_0\times V \times \RR$ for the preimage of $\tilde{\UU}|_{M_0\times\RR}$ under the trivialization \eqref{eq:tHM_trivialization}.

\begin{definition}
\label{def:local_exponential_coordinates}
With the notation as above, the composition
\begin{equation}
 \label{eq:trivialization}
 \xymatrix{
 \quad \VV \quad \ar[r]_\cong \ar@{}[d]|-*[@]{\subset}
   & \quad \tilde{\UU} \quad \ar[r]^-{\EExp^{\psi,\nabla}}_-\cong \ar@{}[d]|-*[@]{\subset}
   & \quad \UU \quad \ar@{}[d]|-*[@]{\subset} \\
 M_0 \times V\times \RR  & \tHM|_{M_0} \times\RR  & \TTHM,
 }
\end{equation}
will be called a {\em local exponential coordinate chart}, and its image in $\TTHM$ (a subset of $\UU$) will be called a \emph{local exponential coordinate patch}.
\end{definition}

\begin{lemma}
\label{lem:local_exponential_support}
  Any $\PP\in\PPsi^m(M)$ can be decomposed as a locally finite sum $\PP = \sum_i \PP_i + \QQ$ where each $\PP_i \in \Psi^m_H(M)$ is supported in some local exponential coordinate patch as in Definition \ref{def:local_exponential_coordinates}, and $\QQ\in\Cp^\infty(\TTHM;\Omega_r)$.
\end{lemma}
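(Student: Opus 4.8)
The plan is to reduce the decomposition to a partition-of-unity argument on the base $M$, transported to $\TTHM$ through the global exponential coordinate patch $\UU$. First I would invoke Lemma \ref{lem:exponential_support} to replace $\PP$ by $\PP'\in\PPsi^m_H(M)$ supported in the global exponential patch $\UU$, absorbing the smooth-density error $\PP-\PP'$ into the final term $\QQ$. It then suffices to decompose $\PP'$, equivalently its pullback $\widetilde{\PP'}\in\sE'_r(\tHM\times\RR)$ supported in $\tilde{\UU}$, using charts of the form \eqref{eq:trivialization}.

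Next I would cover $M$ by coordinate charts $M_0^{(i)}\subseteq M$ with compact closure on which all the $H^j$ are simultaneously trivializable, as in Section \ref{sec:charts}; since $M$ is a manifold (hence paracompact) this cover can be taken locally finite. Choose a subordinate smooth partition of unity $\{\chi_i\}$ on $M$, so $\sum_i\chi_i\equiv 1$ and $\supp\chi_i\subseteq M_0^{(i)}$, the sum being locally finite. Pull each $\chi_i$ back to a function $\mathbbl{\chi}_i$ on $\TTHM$ that depends only on the range-space $M$-coordinate, i.e.\ $\mathbbl{\chi}_i(x,y,t)=\chi_i(x)$ for $t\neq0$ and $\mathbbl{\chi}_i(x,\xi,0)=\chi_i(x)$; as in the proof of Lemma \ref{lem:exponential_support}, this is a smooth function on $\TTHM$, invariant under the zoom action, and $\sum_i\mathbbl{\chi}_i\equiv1$ with locally finite support. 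Setting $\PP_i:=\mathbbl{\chi}_i\,\PP'$, the zoom-invariance of $\mathbbl{\chi}_i$ gives $\zoom_{\lambda*}\PP_i-\lambda^m\PP_i = \mathbbl{\chi}_i(\zoom_{\lambda*}\PP'-\lambda^m\PP')\in\Cp^\infty(\TTHM;\Omega_r)$, so $\PP_i\in\PPsi^m_H(M)$; and $\sum_i\PP_i = \PP'$ with locally finite support, since multiplication by the smooth function $\mathbbl{\chi}_i$ is a bounded operation on $\sE'_r(\TTHM)$ and the supports are locally finite. Finally, $\PP_i$ is supported in $\UU\cap r^{-1}(M_0^{(i)}\times\RR)$, which under $\EExp^{\psi,\nabla}$ composed with the trivialization \eqref{eq:tHM_trivialization} of $\tHM|_{M_0^{(i)}}$ is precisely a local exponential coordinate patch in the sense of Definition \ref{def:local_exponential_coordinates}. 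Thus $\PP = \sum_i\PP_i + \QQ$ with $\QQ := \PP-\PP'\in\Cp^\infty(\TTHM;\Omega_r)$, as required.

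The only mildly delicate point is verifying that an infinite but locally finite sum of the $\PP_i$ genuinely converges in $\sE'_r(\TTHM)$ and has the stated support; this follows because for any test function $f\in C^\infty(\TTHM)$ only finitely many $\PP_i$ meet $\supp f$ over any compact piece of $\TTHM^{(0)}=M\times\RR$, so the pairing $\ip{\sum_i\PP_i,f}$ is a locally finite sum of smooth functions on $M\times\RR$ and hence smooth, and $r$-properness of the support is preserved because each $\PP_i$ inherits it from $\PP'$ and the $r$-images $\supp\chi_i\times\RR$ form a locally finite family. Beyond this bookkeeping the argument is routine: the substantive content — that cutting off by a zoom-invariant function preserves essential homogeneity of weight $m$ — is exactly the mechanism already used in Lemmas \ref{lem:exponential_support} and is reused verbatim here.
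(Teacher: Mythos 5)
Your proof is correct and follows exactly the route the paper takes: its entire proof of this lemma reads ``Use Lemma \ref{lem:exponential_support} and a partition of unity,'' and your argument is precisely that, with the details (zoom-invariance of the pulled-back cut-offs, preservation of essential homogeneity, local finiteness) correctly filled in.
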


\begin{proof}
Use Lemma \ref{lem:exponential_support}  and a partition of unity.
\end{proof}


\subsection{Notation for graded vector spaces}
\label{sec:graded_coordinates}

The following terminology for graded vector spaces $V = \bigoplus_i V_i$ will be used throughout the paper.

A {\em graded basis} for $V$ is a basis $\{e_1, \ldots, e_n\}$ such that each element $e_j$ is contained in some $V_i$.  The corresponding linear coordinates $\xi = (\xi_1,\ldots, \xi_n)$ will be called a {\em graded coordinate system}.  We will write $d_j$ for the degree of $e_j$, meaning $e_j \in V_{d_j}$. The {\em homogeneous dimension} of $V$ is $d_H = \sum_{j=1}^n d_j$.  If $\mathbf{a} = (a_1,\ldots,a_n) \in \NN^n$ is a multi-index 
we write $|\mathbf{a}| = \sum_j a_j$ for its {\em usual order} and $|\mathbf{a}|_H = \sum_j a_jd_j$ for its {\em homogeneous order}.

Now suppose we have a local exponential coordinate chart $M_0 \times V \times \RR \supseteq \VV \to \UU \subseteq \TTHM$ as in Definition \ref{def:local_exponential_coordinates}.  Choose coordinates $x=(x_i)$ for $M_0$, graded coordinates $\xi=(\xi_i)$ for $V$, and let $t$ be the standard coordinate on $\RR$.  The system $(x,\xi,t)$ will be referred to as {\em local exponential coordinates}.  We will reuse $r$ to denote the bundle projection
\[
  r: M_0\times V \times \RR \to M_0\times \RR
\]
which corresponds to the range projection of $\TTHM$ under the exponential map \eqref{eq:trivialization}.

\begin{definition}
\label{def:local_pullback}
If $\PP\in\PPsi^m_H(M)$ is supported in a local exponential coordinate patch, then we will write $\PPtilde(x,\xi,t)$ to denote its pullback  $\PPtilde \in \sE'_r(M_0\times V \times \RR)$ via the local exponential coordinates \eqref{eq:trivialization}.
\end{definition}

This pullback $\PPtilde(x,\xi,t)$ is a smooth family of distributions in $\xi$, indexed by $(x,t)\in M_0\times\RR$.  The double use of the notation $\PPtilde\in\sE'_r(\tHM\times\RR)$ for pullbacks via global exponential coordinates (Definition \ref{def:global_pullback}) and $\PPtilde\in\sE'_r(M_0 \times V \times \RR)$ for pullbacks via local exponential coordinates (Definition \ref{def:local_pullback}) should not cause confusion---the appropriate variant will be made clear when necessary.


\section{Principal cosymbols}
\label{sec:cosymbols}

In this section we show that the highest order part of an $H$-pseudodifferential operator can be identified with a distribution on $\THM$.

\begin{lemma}
\label{lem:smooth_cosymbol}
Let $\PP \in \PPsi^m_H(M)$.  If $\PP_1 \in \Cp^\infty(M\times M;\Omega_r)$ then $\PP_0 \in \Cp^\infty(\THM)$.
\end{lemma}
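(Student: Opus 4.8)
The plan is to use the cocycle machinery established in Lemma~\ref{lem:smooth_cocycle} together with the smoothness of $\PP$ as an $r$-fibred distribution on all of $\TTHM$, and to recover information about the fibre $\PP_0$ by taking a limit $t\to 0$ along the zoom orbit of the fibre at $t=1$. The key point is that the zoom action relates the fibre over $t$ to the fibre over $1$: for $t\in\RR^\times_+$ we have, from \eqref{eq:zoom}, that $\alpha_{t*}\PP$ restricted to $\TTHM|_1$ is built from $\PP_t$, and more usefully, the restriction of the cocycle $F_\lambda = \lambda^{-m}\alpha_{\lambda*}\PP - \PP$ to the fibre $t=1$ expresses $\lambda^{-m}\PP_\lambda$ (transported to $M\times M$ via the groupoid identification coming from $\alpha_\lambda$) as $\PP_1$ plus a smooth density. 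Since $\PP_1$ is assumed smooth, this shows every fibre $\PP_t$ with $t>0$ is a smooth density, with smooth dependence on $t$ by Lemma~\ref{lem:smooth_cocycle}.

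Concretely, I would work in local exponential coordinates as set up in Section~\ref{sec:charts}, using Lemma~\ref{lem:local_exponential_support} to reduce to the case where $\PP$ is supported in a single local exponential coordinate patch, so that we have a pullback $\PPtilde(x,\xi,t)$ which is a smooth family of distributions in $\xi$ indexed by $(x,t)$. In these coordinates the zoom action is the simple dilation \eqref{eq:zoom_in_coords}, $\tilde\alpha_\lambda(x,\xi,t) = (x,\delta_\lambda\xi,\lambda^{-1}t)$. The essential homogeneity condition \eqref{eq:essentially_homogeneous} then says precisely that $\delta_{\lambda*}\PPtilde(x,\,\cdot\,,\lambda^{-1}t) - \lambda^m\PPtilde(x,\,\cdot\,,t)$ is a smooth density in $(x,\xi)$ for each fixed $t$ and $\lambda$, and by Lemma~\ref{lem:smooth_cocycle} smoothly in $(t,\lambda)$ jointly. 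Setting $t=\lambda$ and renaming, we get that for $s>0$,
\[
  \delta_{s*}\PPtilde(x,\,\cdot\,,1) - s^m\,\PPtilde(x,\,\cdot\,,s) = -s^m F_s(x,\,\cdot\,,s)
\]
(up to sign conventions), where the right-hand side is a smooth density depending smoothly on $s\in\RR^\times_+$ all the way as $s\to 0$. The hypothesis $\PP_1\in\Cp^\infty$ means $\PPtilde(x,\,\cdot\,,1)$ is itself a smooth density, hence so is $\delta_{s*}\PPtilde(x,\,\cdot\,,1)$; therefore $s^m\PPtilde(x,\,\cdot\,,s)$ is a smooth density, smoothly in $s$, and in particular extends smoothly to $s=0$.

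The final step is to identify the $s\to 0$ limit with $\PP_0$. Since $\PP\in\sE'_r(\TTHM)$ is a genuine smooth $r$-fibred distribution on the whole tangent groupoid, the family $\PPtilde(x,\,\cdot\,,t)$ is by definition a smooth family of distributions on the $r$-fibres over all $(x,t)\in M_0\times\RR$, including $t=0$ where the fibre is $\THM$. So the distributional limit $\lim_{s\to 0}\PPtilde(x,\,\cdot\,,s)$ is exactly $\PPtilde(x,\,\cdot\,,0) = \widetilde{\PP_0}(x,\,\cdot\,)$. Combined with the previous paragraph, $\widetilde{\PP_0}$ is the $s\to0$ limit of the smooth densities $s^m\PPtilde(x,\,\cdot\,,s)$ together with the extra smooth-density correction term; evaluating at $s=0$ (noting $s^m\to 0$ or handling the $m\geq 0$ versus $m<0$ bookkeeping carefully — this is the one place requiring a little care) shows $\PP_0$ is a smooth density on $\THM$, i.e.\ $\PP_0\in\Cp^\infty(\THM)$. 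Properness of the support of $\PP_0$ is inherited from properness of $\PP$. The main obstacle I anticipate is the bookkeeping around the factor $s^m$ and making sure the joint smoothness from Lemma~\ref{lem:smooth_cocycle} is invoked correctly so that the limit genuinely lands in $\Cp^\infty$ rather than merely some weaker space; everything else is routine once the coordinate picture is set up.
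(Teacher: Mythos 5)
There is a genuine gap at the crucial step, the passage from $t>0$ to $t=0$. Writing out your identity correctly in exponential coordinates, the one-step relation is $\delta_{s*}\PPtilde_s = s^m\,\PPtilde_1 + s^m\,(\widetilde F_s)_1$, i.e.\ $\PPtilde_s = s^m\,\delta_{s^{-1}*}\PPtilde_1 + s^m\,\delta_{s^{-1}*}(\widetilde F_s)_1$, so the correction term involves the cocycle $F_\lambda$ at a parameter $\lambda=s$ tending to $0$, composed with the unbounded dilation $\delta_{s^{-1}*}$. Lemma \ref{lem:smooth_cocycle} only gives smoothness of $\lambda\mapsto F_\lambda$ on $\RR^\times_+$; it gives no bound as $\lambda\to0$, and in general there is none: in Example \ref{ex:logarithmic_singularity} one computes $F_\lambda=-\log|\lambda|\,d\xi$, which diverges as $\lambda\to0^+$. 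So your assertion that the right-hand side ``depends smoothly on $s\in\RR^\times_+$ all the way as $s\to0$'' is unjustified, and the inference ``smooth for each $s>0$ and smooth in $s$ on $\RR^\times_+$, hence extends smoothly to $s=0$'' is a non sequitur (compare $1/s$). The main term is also not innocent: the derivatives $\partial_\xi^\bfb$ of $s^m\delta_{s^{-1}*}\PPtilde_1$ scale like $s^{m+d_H+|\bfb|_H}$, which blows up whenever $m+d_H+|\bfb|_H<0$. The relevant dichotomy is therefore not ``$m\geq0$ versus $m<0$'' but the sign of $m'=m+d_H+|\bfb|_H$, and the low-order derivatives require a separate argument (the paper controls them via compactness of the support in terms of the high-order ones).

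The paper's proof avoids exactly this trap by never letting the dilation parameter leave a compact set: it fixes $\lambda=\tfrac12$, iterates the one-step relation between $p|_{2^{-k}}$ and $p|_{2^{-(k-1)}}$, and telescopes, so that the total error is bounded by the \emph{single} fixed smooth density $F_{1/2}$ times the geometric series $\sum_{j}2^{-jm'}$, convergent precisely when $m'>0$. This yields uniform bounds on all derivatives of the sequence $(p|_{2^{-k}})_k$; the Arzela--Ascoli theorem then produces a $C^\infty$ limit point, which is identified with $\PP_0$ because $\PPtilde_{2^{-k}}\to\PPtilde_0$ as distributions. To repair your argument you would need to replace the single large dilation by this iteration (equivalently, use the cocycle identity \eqref{eq:cocycle_identity} to expand $F_{2^{-k}}$ as a sum of $k$ rescaled translates of $F_{1/2}$ and estimate that sum), and then supply the compactness and limit-identification steps. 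None of this is routine bookkeeping; it is the substance of the proof.
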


\begin{proof}
Essential homogeneity immediately implies that $\PP_t \in \Cp^\infty(M\times M;\Omega_r)$ for all $t\neq0$.  We use an Arzela-Ascoli  argument to show that $\PP_0$, as the limit of the sequence $\PP_1,\PP_\frac{1}{2}, \PP_\frac{1}{4}, \PP_\frac{1}{8}, \dots$  is also smooth.

By Lemma \ref{lem:local_exponential_support}, we may assume that $\PP$ is supported in a local exponential coordinate patch, and we will work with  $\widetilde\PP \in \sE'_r(M_0\times V \times \RR)$, as in Definition \ref{def:local_pullback}. 

Let $\omega = (\omega_{x,t})_{(x,t)\in M_0\times\RR}$ denote the constant family of Lebesgue measures on $V$.
Then  $\widetilde{\PP}|_{M_0\times V \times \RR^\times} = p \omega$ for some $p \in C^\infty(M_0 \times V \times \RR^\times)$. Likewise, we write the cocycle associated to $\widetilde{\PP}$ as $F_\lambda = f_\lambda \omega$ where $f_\lambda \in C^\infty(M_0 \times V \times \RR)$.  

The zoom action acts on fibrewise Lebesgue measure by $\tilde\zoom_{\lambda*} \omega = \lambda^{-d_H} \omega$.  It follows that
\begin{equation}
\label{eq:function_cocycle}
 f_\lambda = \lambda^{-m-d_H} \tilde\zoom_{\lambda^{-1}}^* p - p.
\end{equation}
Differentiating this gives
\[
 \partial_x^\bfa \partial_\xi^\bfb f_\lambda(x,\xi,t) = \lambda^{-m-d_H-|\bfb|_H} (\partial_x^\bfa \partial_\xi^\bfb p)(x,\delta_{\lambda^{-1}}(\xi),\lambda t) - (\partial_x^\bfa \partial_\xi^\bfb p)(x,\xi,t)
\]
for any multi-indices $\mathbf{a}, \mathbf{b} \in \NN^n$.  By fixing $\lambda=\half$, we get the following estimate in uniform norms for every $t\in[0,1]$:
\begin{multline*}
 \left| \; 2^{m+d_H+|\mathbf{b}|_H} \| \partial_x^\mathbf{a} \partial_\xi^\mathbf{b} p|_{\frac{1}{2} t} \|_{\overline{M_0}\times V \times \{\half t\}} - 
    \|\partial_x^\mathbf{a} \partial_\xi^\mathbf{b} p|_t \|_{\overline{M_0}\times V \times \{t\}} \; \right|
   \\
  \leq  \| \partial_x^\mathbf{a} \partial_\xi^\mathbf{b} f_{\frac12} \|_{\overline{M_0}\times V\times[0,1]},
\end{multline*}
where $p|_t$ denotes the restriction of $p$ to $M\times V \times \{t\}$.

Let us put $m'=m+d_H+|\mathbf{b}|_H$.  We obtain
\begin{align*}
 \lefteqn{ \| \partial_x^\mathbf{a} \partial_\xi^\mathbf{b} p|_{2^{-k}} \|_{\overline{M_0}\times V}} \qquad \\
  &= \left( \| \partial_x^\mathbf{a} \partial_\xi^\mathbf{b} p|_{2^{-k}} \|_{\overline{M_0}\times V} - 
   2^{-m'} \| \partial_x^\mathbf{a} \partial_\xi^\mathbf{b} p|_{2^{-(k-1)}} \|_{\overline{M_0}\times V} \right) \\
  &\qquad + \left( 2^{-m'} \| \partial_x^\mathbf{a} \partial_\xi^\mathbf{b} p|_{2^{-(k-1)}} \|_{\overline{M_0}\times V} 
    - 2^{-2m'} \| \partial_x^\mathbf{a} \partial_\xi^\mathbf{b} p|_{2^{-(k-2)}} \|_{\overline{M_0}\times V}\right)  \\
  & \qquad +\cdots + \left( 2^{-(k-1)m'} \| \partial_x^\mathbf{a} \partial_\xi^\mathbf{b} p|_{2^{-1}} \|_{\overline{M_0}\times V} 
    - 2^{-km'} \| \partial_x^\mathbf{a} \partial_\xi^\mathbf{b} p|_{1} \|_{\overline{M_0}\times V}\right) \\
  & \qquad  + 2^{-km'} \| \partial_x^\mathbf{a} \partial_\xi^\mathbf{b} p|_{1} \|_{\overline{M_0}\times V} \\
  & \leq \left( \sum_{j=1}^k 2^{-jm'} \right) \| \partial_x^\mathbf{a} \partial_\xi^\mathbf{b} f_{\half} \|_{\overline{M_0}\times V\times[0,1]} + 2^{-km'} \| \partial_x^\mathbf{a} \partial_\xi^\mathbf{b} p|_{1} \|_{\overline{M_0}\times V}.
\end{align*}
for every $k\in\NN$.
If $m' >0$, this is bounded as $k\to\infty$.  That is, for all multi-indices $\mathbf{a},\mathbf{b}$ with $|\mathbf{b}|_H > -m-d_H$, the sequence of functions $(p|_{2^{-k}})_{k\in\NN}$ is bounded with respect to the seminorm $ \| \partial_x^\mathbf{a} \partial_\xi^\mathbf{b} \,\bullet\, \|_{\overline{M_0}\times V}$.  Since $\supp(p) \cap (\overline{M_0} \times V \times[0,1])$ is compact, the derivatives $\partial_x^\mathbf{a} \partial_\xi^\mathbf{b} p$ with $|\mathbf{b}|_H \leq -m-d$ are bounded in terms of those with  $|\mathbf{b}|_H > -m-d_H$.  By the Arzela-Ascoli Theorem, it admits a limit point $p|_0 \in C^\infty(\overline{M_0}\times V)$.  But $\PPtilde_{2^{-k}}\to \PPtilde_0$ as distributions, so $\PPtilde_0 = p|_0 \omega$. 
\end{proof}

\begin{corollary}
\label{cor:principal_symbol}
Let $\PP, \QQ \in \PPsi^m_H(M)$. If $\PP_1=\QQ_1$, 
then $\PP_0 \equiv \QQ_0$ modulo $\Cp^\infty(\THM;\Omega_r)$.
\end{corollary}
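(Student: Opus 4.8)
The plan is to deduce this directly from Lemma \ref{lem:smooth_cosymbol} by applying that lemma to the difference $\PP - \QQ$.

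First I would check that $\PPsi^m_H(M)$ is a linear subspace of $\sE'_r(\TTHM)$. The properly supported $r$-fibred distributions form a linear space, and the essential homogeneity condition \eqref{eq:essentially_homogeneous} is linear in $\PP$, because $\zoom_{\lambda*}$ is a linear map and $\Cp^\infty(\TTHM;\Omega_r)$ is a linear subspace; hence $\PP - \QQ \in \PPsi^m_H(M)$. Restriction to a fibre $\TTHM|_t$ is also linear, so $(\PP - \QQ)_t = \PP_t - \QQ_t$ for every $t \in \RR$. In particular $(\PP - \QQ)_1 = \PP_1 - \QQ_1 = 0$.

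Then, since $0 \in \Cp^\infty(M\times M;\Omega_r)$, Lemma \ref{lem:smooth_cosymbol} applies to $\PP - \QQ$ and yields $(\PP - \QQ)_0 \in \Cp^\infty(\THM;\Omega_r)$ (the space written $\Cp^\infty(\THM)$ in that lemma is exactly the space of smooth, properly supported fibrewise densities on $\THM$). Since $(\PP - \QQ)_0 = \PP_0 - \QQ_0$, this is precisely the assertion that $\PP_0 \equiv \QQ_0$ modulo $\Cp^\infty(\THM;\Omega_r)$, which is the claim.

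There is essentially no obstacle here: all of the analytic content---the Arzela--Ascoli estimate that controls the limit $\PP_{2^{-k}} \to \PP_0$ and forces $\PP_0$ to be a smooth density whenever $\PP_1$ is---has already been carried out in Lemma \ref{lem:smooth_cosymbol}. The only thing that requires any care is the elementary bookkeeping that a difference of two essentially homogeneous $r$-fibred distributions of the same weight $m$ is again essentially homogeneous of weight $m$, and that fibrewise restriction is linear; both are immediate, which is exactly why the statement is a corollary rather than a theorem.
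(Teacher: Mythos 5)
Your proof is correct and is exactly the intended argument: the paper states this as an immediate corollary of Lemma \ref{lem:smooth_cosymbol} (with no written proof), obtained by applying that lemma to the difference $\PP-\QQ$, whose restriction at $t=1$ vanishes. The bookkeeping you supply (linearity of $\PPsi^m_H(M)$ and of fibrewise restriction) is the right justification and is all that is needed.
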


\begin{definition}
 \label{def:cosymbol_space}
For $m\in\RR$, we define
\begin{align*}
  \Sigma_H^m(M) := \{ K \in \sE_{r}'(\THM) /& \Cp^\infty(\THM;\Omega_r) \st \\
  & \delta_{\lambda*}K = \lambda^m K  \text{ for all } \lambda\in\RR^\times_+ \} .
\end{align*}
Elements of $\Sigma_H^m(M)$ will be called {\em cosymbols of order $m$} on $M$.
\end{definition}


\begin{definition}
\label{def:cosymbol_map}
We define the {\em cosymbol map} 
\[
\sigma_m : \Psi^m_H(M) \to \Sigma^m_H(M)
\]
as follows.  For $P\in\Psi^m_H(M)$, choose any $\PP\in\PPsi^m_H(M)$ with $\PP_1 = P$ and set $\sigma_m(P) := \PP_0$ modulo $\Cp^\infty(\THM;\Omega_r)$.  
\end{definition}

This is well-defined by Corollary \ref{cor:principal_symbol}.

\begin{lemma}
 \label{lem:cosymbols_extend}
The cosymbol map $\sigma_m$ is surjective.
\end{lemma}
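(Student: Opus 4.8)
The plan is to take an arbitrary cosymbol $K \in \Sigma_H^m(M)$, represented by some $K \in \sE'_r(\THM)$ with $\delta_{\lambda*}K = \lambda^m K$ for all $\lambda \in \RR^\times_+$, and to manufacture an essentially homogeneous distribution $\PP \in \PPsi_H^m(M)$ with $\PP_0 = K$ modulo smooth densities; then $\sigma_m(\PP_1) = K$ by definition. The natural candidate is to build $\PP$ so that near $t=0$ it ``spreads out'' $K$ along the $t$-direction using the exponential coordinates, and then to cut it off away from $t=0$. Concretely, working in a global (or local) exponential coordinate patch, I would define $\PPtilde$ on $\tHM \times \RR$ by a formula of the form $\PPtilde(x,\xi,t) = \chi(t)\,K(x,\xi)$ for a suitable bump function $\chi$ with $\chi \equiv 1$ near $t=0$ — that is, declare the pullback of $\PP$ to be literally independent of $t$ in a neighbourhood of the osculating groupoid. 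Then push forward through $\EExp^{\psi,\nabla}$ to get $\PP \in \sE'_r(\TTHM)$, supported in the exponential patch $\UU$, hence properly supported after also imposing the spatial cut-off $\varphi_1$ of Lemma \ref{lem:exponential_support}.

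The key point is to verify essential homogeneity. Since the zoom action in exponential coordinates is $\tilde\zoom_\lambda(x,\xi,t) = (x,\delta_\lambda\xi,\lambda^{-1}t)$, and since in those coordinates $K$ is honestly homogeneous, $\delta_{\lambda*}K = \lambda^m K$, one computes that $\lambda^{-m}\tilde\zoom_{\lambda*}\PPtilde - \PPtilde$ has pullback of the form $(\chi(\lambda^{-1}t) - \chi(t))\,K(x,\xi)$ — wait, one must be careful, since $\PPtilde$ here is a density in $\xi$ and the Lebesgue density picks up a factor $\lambda^{-d_H}$; absorbing this as in \eqref{eq:function_cocycle}, the cocycle $F_\lambda$ becomes (the pushforward of) $(\chi(\lambda^{-1}t) - \chi(t))$ times something homogeneous in $(x,\xi)$. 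This is supported where $t$ is bounded away from $0$, on which region everything is manifestly smooth, because $K$ only enters through its (arbitrary, possibly singular) behaviour in $\xi$, but the factor $(\chi(\lambda^{-1}t)-\chi(t))$ vanishes identically near $t=0$; and for $t\neq0$ the pushforward of a homogeneous distribution through the diffeomorphism $\EExp^{\psi,\nabla}$ restricted to $t\neq 0$ is... not smooth in general. So this naive $t$-constant extension fails: at $t\neq 0$ the distribution $\PP_t$ would be $K$ transplanted to $M\times M$, which is singular along the diagonal, not smooth.

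The fix — and the main obstacle — is that the cut-off in $t$ must be arranged so that $F_\lambda$ lands in $\Cp^\infty$, which forces $\PP$ to be genuinely $t$-independent (up to smooth densities) only where it is already smooth. The correct construction is instead to define $\PP$ by an \emph{averaging integral over the zoom action}: fix a bump $\phi \in \Cc^\infty(\RR^\times_+)$ with $\int_{\RR^\times_+} \phi(s)\,\frac{ds}{s} = 1$, and first produce \emph{any} properly supported $\PP^{(0)} \in \sE'_r(\TTHM)$, supported in $\UU$, with $\PP^{(0)}_0 = K$ — for instance by transplanting $K$ via $\EExp$ with a cut-off in $t$, accepting that $\PP^{(0)}$ is not yet essentially homogeneous. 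Then set
\[
  \PP := \int_{\RR^\times_+} \phi(s)\, s^{-m}\, \zoom_{s*}\PP^{(0)}\, \frac{ds}{s},
\]
exactly as in the proof of Lemma \ref{lem:smooth_cocycle}. One checks directly that $\lambda^{-m}\zoom_{\lambda*}\PP - \PP = \int \big(\phi(\lambda^{-1}s) - \phi(s)\big)\, s^{-m}\,\zoom_{s*}\PP^{(0)}\,\frac{ds}{s}$; I would argue this is a smooth density by noting that the integrand, as a function of $s$ valued in $\sE'_r(\TTHM)$, is supported in a region where $\PP^{(0)}$ has controlled singularities only along $\TTHM^{(0)}$, and that integrating $\zoom_{s*}\PP^{(0)}$ against a smooth compactly supported weight in $s$ smooths out the singularity transverse to the zoom orbits — precisely the mechanism of Lemma \ref{lem:smooth_cocycle}, applied in reverse. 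Since $\zoom_s$ fixes $\TTHM|_0$ setwise acting there by $\delta_s$, and $\delta_{s*}K = s^m K$, the restriction $\PP_0 = \int \phi(s)\,s^{-m}\,\delta_{s*}K\,\frac{ds}{s} = \int \phi(s)\,\frac{ds}{s}\cdot K = K$, so $\sigma_m(\PP_1) = K$ as required. The genuinely delicate step, which I would need to treat carefully rather than wave at, is the smoothness of that difference integral: this requires knowing that $\PP^{(0)}$, restricted away from the units, is already smooth — which one can arrange in the construction of $\PP^{(0)}$ by choosing the $t$-cut-off to localize near $t=0$ — together with a Fubini/differentiation-under-the-integral argument on the multiplicative group, identical in spirit to the convolution trick already used in Lemma \ref{lem:smooth_cocycle}.
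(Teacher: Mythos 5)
Your proposal has a genuine gap, and --- somewhat ironically --- the approach you talked yourself out of in your first paragraph is essentially the paper's actual proof. The paper takes the \emph{constant} family $\widetilde{\KK}_t = K$ for all $t\in\RR$, with no cut-off in $t$ whatsoever, and transfers it to $\TTHM$ using the exponential cut-off $\varphi$ of Lemma \ref{lem:exponential_support}. The point you missed is that $\varphi$ is \emph{zoom-invariant}, so multiplication by it commutes with $\zoom_{\lambda*}$, and the cocycle of $\varphi\cdot\EExp^{\psi,\nabla}_*\widetilde{\KK}$ is just $\varphi$ times the constant-in-$t$ family $\lambda^{-m}\delta_{\lambda*}K-K$, which lies in $\Cp^\infty$ by the very definition of $\Sigma^m_H(M)$; proper support comes from $\varphi_1$ together with the proper support of $K$. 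Your objection that ``at $t\neq0$ the distribution $\PP_t$ would be $K$ transplanted to $M\times M$, which is singular along the diagonal, not smooth'' is not an objection: essential homogeneity constrains the difference $\zoom_{\lambda*}\PP-\lambda^m\PP$, not $\PP$ itself, and pseudodifferential kernels are of course singular on the diagonal. What your computation actually shows is only that a cut-off $\chi(t)$ in the $t$-variable alone destroys essential homogeneity, because $\chi(t)$ is not zoom-invariant.

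The averaging construction you propose as a fix does not repair this. Writing $\psi:=\phi(\lambda^{-1}\,\cdot\,)-\phi$, which has mean zero against $\frac{ds}{s}$, your own identity rearranges to
\[
\lambda^{-m}\zoom_{\lambda*}\PP-\PP=\int_{\RR^\times_+}\psi(s)\,\bigl(s^{-m}\zoom_{s*}\PP^{(0)}-\PP^{(0)}\bigr)\,\frac{ds}{s},
\]
so smoothness of the left-hand side amounts to smoothness of $s^{-m}\zoom_{s*}\PP^{(0)}-\PP^{(0)}$ for $s$ in the support of the weights --- that is, it \emph{requires} essential homogeneity of the seed $\PP^{(0)}$, which is precisely what your seed lacks on account of its $t$-cut-off. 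Concretely, if $\PP^{(0)}_t=\chi(t)Q_t$ with $Q$ the (essentially homogeneous) exponential transplant of $K$, then modulo smooth densities your average equals $g(t)Q_t$ with $g(t)=\int\phi(s)\chi(st)\frac{ds}{s}$, and its cocycle contains the term $(g(\lambda t)-g(t))\,Q_t$, which is singular along the diagonal for the intermediate values of $t$ where $g(\lambda t)\neq g(t)$. The heuristic that integrating in $s$ ``smooths out the singularity transverse to the zoom orbits'' fails here because the singular support lies in the unit space $M\times\RR$, which is preserved by the zoom action; the convolution trick of Lemma \ref{lem:smooth_cocycle} upgrades an already-smooth cocycle to one varying smoothly in $\lambda$, but it cannot manufacture a smooth cocycle from a singular one. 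The repair is simple: drop $\chi$ entirely and keep only the zoom-invariant exponential cut-off.
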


\begin{proof}
Consider a distribution $K \in \sE_r'(\THM)$ representing a cosymbol of order $m$.
Define the constant family $\widetilde{\KK} \in \sE'_r(\tHM\times\RR)$ by $\widetilde{\KK}_t = K$ for all $t\in\RR$.  Then $\widetilde{\KK}$ is essentially homogeneous for the action $(\tilde{\zoom}_\lambda)$ of Equation \eqref{eq:zoom_in_coords}.  This can be transferred to an essentially homogeneous distribution $\KK$ on $\TTHM$ by use of an exponential cut-off (Section \ref{sec:cut-offs}).  It has $\KK_0 = K$.
\end{proof}

\begin{proposition}
\label{prop:vanishing}
 Let $\PP \in \PPsi^m_H(M)$, $k\in\ZZ$.  The following are equivalent:
 \begin{enumerate}
  \item $\PP = t^k \QQ$ for some $\QQ \in \sE'_r(\TTHM)$,
  \item the pullback $\PPtilde \in \sE'_r(\tHM\times\RR)$ of $\PP$ under any global exponential coordinates satisfies $(\partial_t^j\PP)|_{t=0} = 0$ for $j=0,\ldots,k-1$.
 \end{enumerate}
 In this case, $\QQ \in \PPsi^{m-k}_H(M)$.
\end{proposition}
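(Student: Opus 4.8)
The plan is to prove the cycle of implications (1) $\Rightarrow$ (2) $\Rightarrow$ (1), and then verify the essential-homogeneity statement at the end. The implication (1) $\Rightarrow$ (2) is the easy direction: if $\PP = t^k\QQ$ with $\QQ\in\sE'_r(\TTHM)$, then in any global exponential coordinates $\PPtilde(x,\xi,t) = t^k\QQtilde(x,\xi,t)$, and since $\QQtilde$ is a smooth family of distributions in $\xi$ parametrized smoothly by $(x,t)$, Leibniz's rule gives $(\partial_t^j\PPtilde)|_{t=0}=0$ for $j=0,\ldots,k-1$. Here one must note that multiplication by the smooth function $t^k$ on $\tHM\times\RR$ genuinely preserves $r$-fibred distributions and commutes with $\partial_t$-differentiation in the appropriate distributional sense, which is routine from Definition \ref{Def:fibred_dist}.

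For the harder direction (2) $\Rightarrow$ (1), I would work in a global exponential coordinate patch (using Lemma \ref{lem:exponential_support} to reduce to $\PP$ supported in $\UU$, and absorbing the smooth error, which trivially has the form $t^k\QQ$ for a smooth $\QQ$). The strategy is a Hadamard-type division: if $\PPtilde$ is a smooth family of distributions in $\xi$, with parameter $(x,t)$, and all $t$-derivatives up to order $k-1$ vanish at $t=0$, then $\QQtilde(x,\xi,t) := t^{-k}\PPtilde(x,\xi,t)$ extends to a smooth family across $t=0$. The clean way to see this is via the integral formula
\[
 \QQtilde(x,\xi,t) = \frac{1}{(k-1)!}\int_0^1 (1-s)^{k-1}\,(\partial_t^k\PPtilde)(x,\xi,st)\,ds,
\]
valid by Taylor's theorem with integral remainder applied to the smooth $\sE'$-valued map $t\mapsto\PPtilde(x,\cdot,t)$; the right-hand side is manifestly a smooth function of $(x,t)$ into $\sE'(V)$ (differentiation under the integral sign), hence defines $\QQ\in\sE'_r(\tHM\times\RR)$, and transporting back via $\EExp^{\psi,\nabla}$ gives $\QQ\in\sE'_r(\TTHM)$ with $\PP=t^k\QQ$. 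The main technical point to check carefully — and the step I expect to be the main obstacle — is that the Taylor-with-remainder argument, which is standard for scalar or Banach-space-valued functions, applies to the Fréchet-space-valued map $t\mapsto\PPtilde(x,\cdot,t)\in\sE'(V)$ uniformly in $x$, so that the resulting $\QQtilde$ is jointly smooth in $(x,t)$ with values in $\sE'(V)$; this requires knowing that $\PPtilde$ is a $C^\infty$ map $M_0\times\RR\to\sE'(V)$ in the first place, which is part of the definition of an $r$-fibred distribution, but the joint smoothness after division needs the integral representation and continuity of the relevant bilinear pairings.

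Finally, for the last sentence, suppose (1) holds, so $\PP = t^k\QQ$. Since $\tilde\zoom_\lambda(x,\xi,t) = (x,\delta_\lambda\xi,\lambda^{-1}t)$ by \eqref{eq:zoom_in_coords}, we have $\zoom_{\lambda*}(t^k\QQ) = \lambda^{-k}\,t^k\,\zoom_{\lambda*}\QQ$, and in coordinates one checks that $\zoom_{\lambda*}$ maps $\sE'_r(\TTHM)$ to itself so that $\zoom_{\lambda*}\QQ \in \sE'_r(\TTHM)$ is well-defined. Then
\[
 \zoom_{\lambda*}\PP - \lambda^m\PP = t^k\bigl(\lambda^{-k}\zoom_{\lambda*}\QQ - \lambda^m\QQ\bigr) = \lambda^{-k}t^k\bigl(\zoom_{\lambda*}\QQ - \lambda^{m+k}\QQ\bigr),
\]
so that $\zoom_{\lambda*}\QQ - \lambda^{m+k}\QQ = \lambda^k t^{-k}(\zoom_{\lambda*}\PP - \lambda^m\PP)$. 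By essential homogeneity of $\PP$, the right-hand side inside the parentheses is $t^k$ times a smooth density (up to the explicit power of $\lambda$); but more directly, $\zoom_{\lambda*}\PP - \lambda^m\PP \in \Cp^\infty(\TTHM;\Omega_r)$ vanishes to infinite order nowhere in general, so instead I argue: $\zoom_{\lambda*}\QQ - \lambda^{m+k}\QQ$ is an $r$-fibred distribution which, multiplied by $t^k$, lies in $\Cp^\infty(\TTHM;\Omega_r)$; since multiplication by $t^k$ is injective on $\sE'_r(\TTHM)$ and the smooth density $\zoom_{\lambda*}\PP-\lambda^m\PP$ itself has all $t$-derivatives up to order $k-1$ vanishing at $t=0$ (because it equals $t^k$ times something by the computation above combined with (2) for $\zoom_{\lambda*}\PP$, which also satisfies the vanishing by the same coordinate formula), the quotient $t^{-k}(\zoom_{\lambda*}\PP-\lambda^m\PP)$ is again a smooth density by the already-established division argument. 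Hence $\zoom_{\lambda*}\QQ - \lambda^{m+k}\QQ \in \Cp^\infty(\TTHM;\Omega_r)$ for all $\lambda$, i.e. $\QQ\in\PPsi^{m-k}_H(M)$.
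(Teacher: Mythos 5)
Your overall strategy coincides with the paper's: the equivalence of (1) and (2) is the Hadamard/Taylor division of a smooth $\sE'_r(\tHM)$-valued function of $t$ (which the paper dismisses as ``immediate'' from $\sE'_r(\tHM\times\RR)\cong C^\infty(\RR,\sE'_r(\tHM))$ --- your integral-remainder formula is exactly the right way to justify it), and the homogeneity of $\QQ$ is obtained, as in the paper, by showing that the cocycle of $\PP$ is $t^k$ times a smooth density and identifying that quotient with the cocycle of $\QQ$.

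There is, however, a sign error in the exponent of $\lambda$ that makes your final paragraph internally inconsistent. Since $\zoom_\lambda^{-1}=\zoom_{\lambda^{-1}}$ sends $t\mapsto\lambda t$, the push-forward of a product is $\zoom_{\lambda*}(t^k\QQ)=(t^k\circ\zoom_\lambda^{-1})\,\zoom_{\lambda*}\QQ=\lambda^{+k}\,t^k\,\zoom_{\lambda*}\QQ$, not $\lambda^{-k}t^k\zoom_{\lambda*}\QQ$. (Check on the fibre at $t\neq0$: $(\zoom_{\lambda*}(t^k\QQ))_t=(\lambda t)^k\QQ_{\lambda t}$.) Consequently
\[
 \zoom_{\lambda*}\PP-\lambda^m\PP \;=\; \lambda^k t^k\bigl(\zoom_{\lambda*}\QQ-\lambda^{m-k}\QQ\bigr),
\]
so the quantity you control is $\zoom_{\lambda*}\QQ-\lambda^{m-k}\QQ$, giving $\QQ\in\PPsi^{m-k}_H(M)$ as claimed. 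As written, your display yields $\zoom_{\lambda*}\QQ-\lambda^{m+k}\QQ\in\Cp^\infty(\TTHM;\Omega_r)$, which would mean $\QQ\in\PPsi^{m+k}_H(M)$, and your closing sentence then asserts $\QQ\in\PPsi^{m-k}_H(M)$ anyway --- these do not match. Once the sign is corrected, the rest of your argument (injectivity of multiplication by $t^k$ on $\sE'_r(\TTHM)$, order-$(k-1)$ vanishing of the smooth cocycle at $t=0$, and division of a smooth density by $t^k$) is sound and agrees with the paper's proof, which phrases the same step as the fibrewise identity $(\lambda^{-m+k}\zoom_{\lambda*}\QQ-\QQ)_t=t^{-k}(\lambda^{-m}\zoom_{\lambda*}\PP-\PP)_t$ for $t\neq0$, extended to $t=0$ by continuity.
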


\begin{proof}
Since $\PPtilde \in \sE'_r(\tHM \times \RR) \cong C^\infty(\RR, \sE'_r(\tHM))$, the equivalence of the vanishing conditions $(1)$ and $(2)$ is immediate.  
The cocycle $\widetilde{F}$ associated to $\PPtilde$ also vanishes to order $k-1$ in $t$, so $F_\lambda = t^k G_\lambda$ for some $G_\lambda \in \Dens$.  For all $t\neq 0$ we have 
\begin{multline*}
(\lambda^{-m+k}\zoom_{\lambda*} \QQ - \QQ)_t = \lambda^{-m+k} \QQ_{\lambda t} - \QQ_t \\
 = t^{-k}(\lambda^{-m} \PP_{\lambda t} -\PP_t) = t^{-k}(\lambda^{-m}\zoom_{\lambda*} \PP - \PP)_t = G_\lambda|_t.
\end{multline*}
By continuity, we also get $(\lambda^{-m+k}\zoom_{\lambda*} \QQ - \QQ)_0 = G_\lambda|_0$.  Thus $\QQ \in \PPsi_H^{m-k}(M)$.
\end{proof}

\begin{corollary}
 \label{cor:lower_order}
 $\Psi^{m-1}_H(M) = \ker \sigma_m \subseteq \Psi^{m}_H(M)$ for every $m\in\ZZ$.
\end{corollary}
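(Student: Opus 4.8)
The plan is to prove the two inclusions making up the equality $\Psi^{m-1}_H(M)=\ker\sigma_m$ separately; the remaining assertion $\ker\sigma_m\subseteq\Psi^m_H(M)$ is automatic, as $\sigma_m$ is by definition a map on $\Psi^m_H(M)$. The only real tool is multiplication and division by the coordinate $t$, controlled by Proposition \ref{prop:vanishing}. For $\Psi^{m-1}_H(M)\subseteq\ker\sigma_m$---which will incidentally also give $\Psi^{m-1}_H(M)\subseteq\Psi^m_H(M)$---I would start from $P=\QQ_1$ with $\QQ\in\PPsi^{m-1}_H(M)$ and consider $t\QQ$, the product of $\QQ$ with the smooth function $t$ on $\TTHM$ pulled back from the unit space $\TTHM^{(0)}=M\times\RR$. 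Since the zoom action rescales this function, $t\circ\zoom_\lambda^{-1}=\lambda\,t$, the elementary identity $\zoom_{\lambda*}(fu)=(f\circ\zoom_\lambda^{-1})\,\zoom_{\lambda*}u$ for a function $f$ and a fibred distribution $u$ gives $\zoom_{\lambda*}(t\QQ)=\lambda\,t\,\zoom_{\lambda*}\QQ$, whence
\[
 \zoom_{\lambda*}(t\QQ)-\lambda^m\,t\QQ \;=\; \lambda\,t\,\bigl(\zoom_{\lambda*}\QQ-\lambda^{m-1}\QQ\bigr),
\]
which is a smooth function times the cocycle of $\QQ$, hence a properly supported smooth density. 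Thus $t\QQ\in\PPsi^m_H(M)$, and since $(t\QQ)_1=\QQ_1=P$ already $P\in\Psi^m_H(M)$. Finally $t\QQ$ is of the form $t^k\QQ$ with $k=1$, so Proposition \ref{prop:vanishing} forces $(t\QQ)_0=0$, and hence $\sigma_m(P)=0$.

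For the reverse inclusion $\ker\sigma_m\subseteq\Psi^{m-1}_H(M)$, take $P\in\ker\sigma_m$ and choose any $\PP\in\PPsi^m_H(M)$ with $\PP_1=P$; by definition of $\sigma_m$, the hypothesis $\sigma_m(P)=0$ says exactly that $\PP_0\in\Cp^\infty(\THM;\Omega_r)$. The idea is then to correct $\PP$ by a smooth density so that it vanishes at $t=0$ without disturbing $\PP_1$, and to divide by $t$. To build the correction, I would extend the smooth density $\PP_0$ on $\THM$ to a properly supported smooth density $\KK$ on $\TTHM$ with $\KK_0=\PP_0$; this is precisely the construction in the proof of Lemma \ref{lem:cosymbols_extend} (constant family in $t$, followed by an exponential cut-off as in Lemma \ref{lem:exponential_support}), which returns a smooth density when fed one. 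Then replace $\KK$ by $\chi(t)\KK$ for a cut-off $\chi\in C^\infty(\RR)$ with $\chi\equiv1$ near $0$ and $\chi(1)=0$, so that additionally $\KK_1=0$. Now $\PP':=\PP-\KK\in\PPsi^m_H(M)$ has $\PP'_0=0$ and $\PP'_1=P$, so Proposition \ref{prop:vanishing} (with $k=1$) lets us write $\PP'=t\QQ$ with $\QQ\in\PPsi^{m-1}_H(M)$; then $\QQ_1=\PP'_1=P$, i.e.\ $P\in\Psi^{m-1}_H(M)$.

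Nearly everything above is bookkeeping on top of Proposition \ref{prop:vanishing}, together with the two trivialities that $t\QQ$ is essentially homogeneous of weight one above $\QQ$ and that multiplying a proper smooth density by a cut-off keeps it a proper smooth density. The one step deserving care is the extension of $\PP_0$ to a global smooth density on $\TTHM$: since $\PP_0$ need only be properly---not compactly---supported over $M$, the extension must be carried out inside a global exponential coordinate patch with a cut-off that equals $1$ on a neighbourhood of $\THM\times\{0\}$ and is supported in the patch, so that the proper-support condition survives. I expect this to be the only genuine obstacle, and a mild one.
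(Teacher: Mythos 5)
Your proof is correct and follows essentially the same route as the paper: the map $\QQ\mapsto t\QQ$ gives $\Psi^{m-1}_H(M)\subseteq\ker\sigma_m$, and surjectivity comes from Proposition \ref{prop:vanishing}. The one detail you supply that the paper leaves implicit---correcting $\PP$ by a properly supported smooth density so that $\PP_0$ vanishes identically (not merely lies in $\Cp^\infty(\THM;\Omega_r)$) before invoking Proposition \ref{prop:vanishing}---is indeed necessary, and your cut-off construction handles it correctly.
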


\begin{proof}
 The map $\PPsi^{m-1}_H(M) \into \PPsi^m_H(M);~\PP \mapsto t\PP$ yields the inclusion $\Psi^{m-1}_H(M) \into \ker \sigma_m$.  It is surjective by Proposition \ref{prop:vanishing}.
\end{proof}

This proves the short exact sequence \eqref{eq:short_exact_sequence} of the introduction,
and shows that the principal cosymbol $\sigma_m(P)\in \Sigma^m_H(M)$ is indeed the highest order part of $P\in  \Psi^{m}_H(M)$.

\section{Full symbols}
\label{sec:symbols}

Although Fourier transforms---"symbols"---play no role in our definition of pseudodifferential operators, they are (unsurprisingly) important for proving the main analytic properties.
We begin by recalling a few facts about vector bundle Fourier transforms.


\subsection{Fourier transforms on vector bundles}

Let $\pi:E \to M$ be a vector bundle and $\pi':E^*\to M$ its dual bundle.

Recall (see e.g.\ \cite{Carrillo-Rouse, DebSka}) that there is a well-defined space $\sS_\pi(E)$ of \emph{fibre-wise Schwartz class functions} on $E$, with the following Fr\'echet seminorms:
\begin{equation}
 \label{eq:Schwarz_seminorm}
 \| f \|_{K,\mathbf{a},\mathbf{b},\mathbf{c}} = \sup_{x\in K, \xi\in V} |\xi^\bfa \partial_\xi^\bfb \partial_x^\bfc f(x,\xi)|,
\end{equation}
where $(x,\xi) \in M_0\times V \into E$ are linear bundle coordinates, $K\subset M_0$ is a compact subset, and $\bfa,\bfb,\bfc\in\NN^n$ are multi-indices.  One can likewise define the \emph{Schwartz class vertical densities} $\sS_\pi(E;\Omega_\pi)$.

The \emph{fibrewise Fourier transform} $\sF_\pi: \sS_\pi(E;\Omega_\pi) \to \sS_{\pi'}(E^*)$ is defined by $u\mapsto \hat{u}$ where
\begin{equation}
 \label{eq:fibrewise_Fourier_transform}
  \hat{u}(\eta) = (u_x, e^{-i(\eta,\slot)}), \qquad  \qquad \eta\in E^*_x,~x\in M,
\end{equation}
This extends to a map between the  \emph{tempered fibred distributions}\linebreak $\sS'_\pi(E) \to \sS'_{\pi'}(E^*;\Omega_\pi^*)$, which are the $C^\infty(M)$-linear duals of the Schwartz spaces.  
In analogy with Definition \ref{Def:fibred_dist}, let $\sE'_\pi(E)$ be the set of $C^\infty(M)$-linear maps $u:C^\infty(E) \to C^\infty(M)$.
Note that $\sE'_\pi(E)$ is a subset of $\sS'_\pi(E)$.  The Fourier transform maps $\sE'_\pi(E)$ into $C^\infty(E^*)$.


\subsection{Full symbols}
\label{sec:full_symbols}

In the following definition, we view $\tHM \times \RR$ as a bundle over $M \times \RR$.  The fibrewise Fourier transform maps $\sE'_r(\tHM \times \RR)$ to $C^\infty(\tsHM\times\RR)$.

\begin{definition}
\label{def:PPhat}
Suppose $\PP\in\PPsi^m_H(M)$ is supported in a global exponential coordinate patch $\UU$ (see Equation \eqref{eq:exponential_patch}) and let  $\PPtilde\in\sE'_r(\tHM\times\RR)$ be its pullback under $\EExp^{\psi,\nabla}$.  We will write $\hat{\PP} \in C^\infty(\tsHM\times\RR)$ for the Fourier transform of $\PPtilde$.  

The \emph{full symbol} of the $H$-pseudodifferential operator $P = \PP_1$ is $\hat{P} = \hat{\PP}_1$.
\end{definition}

It is understood that any statements we make regarding $\hat{\PP}$ or $\hat{P}$ will hold regardless of the choice of exponential coordinates and cut-off.

\begin{example}
 \label{ex:full_symbol}
Consider the case of $M = \RR^n$ with trivial filtration. 
Using  the exponential coordinates \eqref{eq:Rn_exponential} we have $\widetilde{P}(x,\xi) = P(x,x-\xi)$.  Therefore, the full symbol $\hat{P}$ is defined by exactly the same formula that converts a classical pseudodifferential kernel to its symbol---see Equation (18.1.7) of \cite{Hormander:III}.
\end{example}

\subsection{Homogeneity of the Fourier transform}
\label{sec:Fourier_homogeneity}

Roughly speaking, the Fourier transform converts homogeneity of a distribution $u$ near zero into homogeneity of $\hat{u}$ at infinity.  In this section, we give a precise application of this principle to $\PPsi^m_H(M)$.  


Let $E = \bigoplus_i E_i$ be a graded vector bundle.  The dual bundle $E^*$ is also graded, thanks to the canonical decomposition $E^* \cong \bigoplus_i E_i^*$.  We denote by $(\delta_\lambda)$ and $(\delta'_\lambda)$ the canonical families of dilations on $E$ and $E^*$, respectively (see Section \ref{sec:global_exponential_coordinates}).   
Note that the fibrewise Fourier transform $\sF_\pi:\sE'_\pi(E) \to C^\infty(E^*)$ is compatible with the dilations:
\[
 \qquad \widehat{\delta_{\lambda_*} u} = {\delta'_\lambda}^* \hat{u}, \qquad\qquad \text{for all $u\in \sE'_\pi(E) $}.
\]

We apply this to $\PP\in\PPsi^m_H(M)$.  
From Equation \eqref{eq:zoom_in_coords}, the zoom action on $\PPtilde$ in exponential coordinates is given by
\[
 (\tilde{\alpha}_{\lambda*}\PPtilde)|_t = \delta_{\lambda*}(\PPtilde_{\lambda t}), \qquad\qquad t\in \RR, \lambda\in\RR^\times.
\]
Taking fibrewise Fourier transforms we get
\begin{align*}
  (\widehat{\zoom_{\lambda*} \PP}) (x,\eta,t) 
    &= (\delta_{\lambda*}\PPtilde_{(x,\lambda t)} \, , \, e^{-i(\eta,\slot)} ) \\
    &\qquad= ( \PPtilde_{(x,\lambda t)} \, , \, e^{-i(\delta'_\lambda(\eta),\slot)} ) 
    = \hat{\PP}(x, \delta'_\lambda\eta,\lambda t),
\end{align*}
where $(x,\eta,t) \in \tsHM \times \RR$.
That is, 
\begin{equation}
\widehat{\tilde\zoom_{\lambda*} \PPtilde} = \beta_\lambda^*\hat{\PP},
\end{equation}
 where $\beta_\lambda:\tsHM\times\RR \to \tsHM\times\RR$ is defined by
\begin{equation}
\label{eq:dual_zoom}
  \beta_\lambda(x,\eta,t) = (x, \delta'_\lambda(\eta),\lambda t).
\end{equation}
Therefore, the essential homogeneity of $\PP\in\PPsi^m_H(M)$ gives
\begin{equation}
\label{eq:homogeneity_of_Fourier_transform1}
  \beta_\lambda^*\hat{\PP} - \lambda^m \hat{\PP} \in \sS_{r'}(\tsHM\times\RR) \qquad \text{for all $\lambda\in\RR^\times_+$.}
\end{equation}

Importantly, the maps $\beta_\lambda$ are themselves dilations associated to a graded vector bundle.  Namely, we change our point of view to consider $\tsHM \times \RR$ as a vector bundle over $M$ (rather than $M\times \RR$) via the bundle projection 
\begin{equation}
 \label{eq:augmented_bundle}
 \rho: \tsHM \times \RR \to M ; \qquad (x,\eta,t) \mapsto x. 
\end{equation}
Then $(\beta_\lambda)$ is the family of dilations on this bundle, where $\tsHM$ has its original grading and the complementary fibre $\RR$  is given grading degree $1$.  

\begin{definition}
 \label{def:homogeneous_on_the_nose}
 An essentially homogeneous $r$-fibred distribution $\PP\in\PPsi^m_H(M)$ is called \emph{homogeneous on the nose outside $[-1,1]$} if 
for all $t>1$ we have $\PP_t=t^m\PP_1$ and $\PP_{-t}=t^m\PP_{-1}$.
\end{definition}

\begin{proposition}
 \label{prop:homogeneous_on_the_nose}
 Any $H$-pseudodifferential operator $P$ admits an extension to $\PP\in\PPsi^m_H(M)$ that is homogeneous on the nose outside $[-1,1]$.
\end{proposition}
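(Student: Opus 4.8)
The plan is to start from an arbitrary extension $\PP\in\PPsi^m_H(M)$ with $\PP_1=P$ (one exists by the definition of $\Psi^m_H(M)$) and to modify it only on the tails $|t|>\tfrac12$ so that it becomes honestly homogeneous for $|t|>1$ without disturbing its restriction at $t=1$. The modification will be the subtraction of a smooth density, so the modified distribution automatically stays in $\PPsi^m_H(M)$.

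First I would record the precise size of the failure of homogeneity. Writing $F_\lambda=\lambda^{-m}\zoom_{\lambda*}\PP-\PP$ for the cocycle of $\PP$ and using $(\zoom_{\lambda*}\PP)_s=\PP_{\lambda s}$, one reads off that $\PP_t-t^m\PP_1=t^m(F_t)_1$ for $t>0$ and $\PP_t-|t|^m\PP_{-1}=|t|^m(F_{|t|})_{-1}$ for $t<0$; in both cases the difference lies in $\Cp^\infty(M\times M;\Omega_r)$ and, by Lemma~\ref{lem:smooth_cocycle} together with continuity of restriction to a single $t$-fibre, depends smoothly on $t\in\RR^\times$. Now choose $\chi\in C^\infty(\RR)$ with $\chi\equiv0$ on $[-\tfrac12,\tfrac12]$ and $\chi\equiv1$ on $\{|t|\ge1\}$, and define a family $\RR$ by $\RR_t:=\chi(t)\bigl(\PP_t-|t|^m\PP_{\operatorname{sgn}t}\bigr)$ for $t\ne0$ (with $\PP_{\operatorname{sgn}t}$ denoting $\PP_1$ if $t>0$ and $\PP_{-1}$ if $t<0$) and $\RR_t:=0$ for $|t|\le\tfrac12$. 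Because $\chi$ vanishes on a whole neighbourhood of $0$, the two prescriptions agree where they overlap and $\RR$ is a genuine smooth section of $\Omega_r$ over all of $\TTHM$, identically zero near the delicate locus $\THM\times\{0\}$; its support is contained in $\supp\PP\cup(\supp P\times\{|t|\ge\tfrac12\})$, so it is proper. Hence $\RR\in\Cp^\infty(\TTHM;\Omega_r)$.

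Then I would set $\PP':=\PP-\RR$ and check the three required properties. Since $\Cp^\infty(\TTHM;\Omega_r)$ is zoom-invariant and every smooth density is essentially homogeneous of every weight, $\PP'$ still lies in $\PPsi^m_H(M)$. Because $F_1=0$, the correction $\RR$ vanishes at $t=\pm1$, so $\PP'_{\pm1}=\PP_{\pm1}$; in particular $\PP'_1=P$. Finally, for $t>1$ we have $\chi(t)=1$, whence $\PP'_t=\PP_t-(\PP_t-t^m\PP_1)=t^m\PP_1=t^m\PP'_1$, and symmetrically $\PP'_t=|t|^m\PP'_{-1}$ for $t<-1$; that is, $\PP'$ is homogeneous on the nose outside $[-1,1]$.

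The only genuinely delicate point is that the correction $\RR$ be a smooth density on the manifold $\TTHM$ --- not merely a smooth family of fibrewise densities over $\{t\ne0\}$ --- and that it be properly supported. Both are secured by two facts already at hand: essential homogeneity forces $\PP_t-|t|^m\PP_{\pm1}$ to be a genuine smooth density in the first place, and $\chi$ is chosen to vanish on a full neighbourhood of $t=0$. Everything else is a routine verification.
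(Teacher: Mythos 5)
Your proof is correct and is essentially the same as the paper's: writing $\PP' = (1-\chi)\PP + \chi_+ t^m\PP_1 + \chi_- |t|^m\PP_{-1}$ shows your ``subtract a cut-off multiple of the cocycle'' construction is literally the paper's partition-of-unity interpolation, and both rest on Lemma~\ref{lem:smooth_cocycle} for joint smoothness of $t\mapsto \PP_t - |t|^m\PP_{\pm1}$. (Only cosmetic remark: avoid naming the correction term $\RR$, which already denotes the reals.)
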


\begin{proof}
Choose any $\QQ\in \PPsi^m_H(M)$ with $\QQ_1=P$.  Fix a smooth partition of unity $\phi_-$, $\phi_0$, $\phi_+: \RR\to[0,\infty)$ on $\RR$ with supports in $(-\infty,-\half)$, $(-1,1)$, $(\half,\infty)$, respectively, and define $\PP\in \sE'_r(\TTHM)$ by
\[
 \PP_t := \phi_-(t)\, t^m\QQ_{-1} + \phi_0(t)\, \QQ + \phi_+(t)\, t^m \QQ_{1}.
\]
Lemma \ref{lem:smooth_cocycle} implies that the family $t^m\QQ_1-\QQ_t$ is a smooth function of $t\in \RR^\times_+$ into $C^\infty_P(M\times M,\Omega_r)$, and similarly for $t^m\QQ_{-1}-\QQ_{-t}$ when $t<0$.  We obtain that $\PP-\QQ\in C^\infty_p(\TTHM;\Omega_r)$ and the result follows.
\end{proof}

Note that exponential cut-off functions are also homogeneous on the nose, so we may simultaneously arrange that $\PP$ be supported in a global exponential coordinate patch.

Returning now to the homogeneity of $\hat\PP$, let $\PP\in\PPsi^m_H(M)$ be homogeneous on the nose outside $[-1,1]$. Then the differences of Equation \eqref{eq:homogeneity_of_Fourier_transform1} have support bounded in $t$.  We obtain the improved condition:
\begin{equation}
\label{eq:homogeneity_of_Fourier_transform2}
  \beta_\lambda^*\hat{\PP} - \lambda^m \hat{\PP} \in \sS_{\rho}(\tsHM\times\RR) \qquad \text{for all $\lambda\in\RR^\times_+$.}
\end{equation}

Now, a bundle analogue of \cite[Proposition 2.2]{Taylor:microlocal} immediately gives the following.

\begin{proposition}
\label{prop:symbol_homogeneity}
 Let $\PP \in \PPsi^m_H(M)$ be homogeneous on the nose outside $[-1,1]$, and let $\hat{\PP}\in C^\infty(\tsHM\times\RR)$ be its Fourier transform (with respect to any exponential coordinates).  Then $\hat\PP$ is equal, modulo $\sS_\rho(\tsHM\times\RR)$, to a genuinely homogeneous function at infinity.   More precisely, there exists a smooth function $A \in C^\infty(\tsHM \setminus (M\times \{0\}\times\{0\}))$ satisfying
 \[
   \beta_\lambda^* A = \lambda^m A  \qquad \text{for all } \lambda\in\RR^\times_+,
 \]
and such that, for any bump function $\varphi\in C^\infty(\tsHM\times\RR)$ that is $0$ in some neighbourhood of the zero section of $\rho$ and $1$ outside some $\rho$-proper set, we have $\hat\PP - \varphi A \in \sS_\rho(\tsHM\times\RR)$.
\end{proposition}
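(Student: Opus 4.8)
The plan is to deduce this from the familiar symbol‑theoretic fact --- \cite[Proposition~2.2]{Taylor:microlocal} --- that a function which is homogeneous of a given weight \emph{modulo Schwartz} agrees, modulo Schwartz, with a genuinely homogeneous one, transplanted to the anisotropic bundle $\rho\colon\tsHM\times\RR\to M$ of \eqref{eq:augmented_bundle}. All the required hypotheses are already in hand. By Equation~\eqref{eq:homogeneity_of_Fourier_transform2}, for $\PP$ homogeneous on the nose outside $[-1,1]$ the function
\[
  g_\lambda := \beta_\lambda^*\hat\PP - \lambda^m\hat\PP
\]
lies in $\sS_\rho(\tsHM\times\RR)$ for every $\lambda\in\RR^\times_+$; moreover $\lambda\mapsto g_\lambda$ is a smooth map into $\sS_\rho(\tsHM\times\RR)$, which follows from Lemma~\ref{lem:smooth_cocycle} via the fibrewise Fourier transform, together with the bound on the $t$‑support of the cocycle coming from homogeneity on the nose outside $[-1,1]$. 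Crucially, the maps $(\beta_\lambda)$ of \eqref{eq:dual_zoom} are exactly the dilations of the graded bundle $\rho$, so $\beta_{\lambda_1}\beta_{\lambda_2}=\beta_{\lambda_1\lambda_2}$ and $\|\beta_\lambda\zeta\|=\lambda\|\zeta\|$ for a fixed homogeneous norm $\|\cdot\|$ on the fibres of $\rho$; in particular $\beta_\lambda\zeta\to\infty$ as $\lambda\to\infty$ for every $\zeta$ off the zero section $M\times\{0\}\times\{0\}$ of $\rho$.

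Granting this, I would construct $A$ explicitly. Writing $\dot g := \tfrac{d}{d\lambda}\big|_{\lambda=1}g_\lambda\in\sS_\rho(\tsHM\times\RR)$, the group property of $(\beta_\lambda)$ gives the identity $\tfrac{d}{du}\big[e^{-mu}\hat\PP(\beta_{e^u}\zeta)\big]=e^{-mu}\,\dot g(\beta_{e^u}\zeta)$, so for $\zeta$ off the zero section of $\rho$ I set
\[
  A(\zeta):=\hat\PP(\zeta)+\int_0^\infty e^{-mu}\,\dot g(\beta_{e^u}\zeta)\,du\;=\;\lim_{\lambda\to\infty}\lambda^{-m}\hat\PP(\beta_\lambda\zeta).
\]
The integral converges absolutely, together with all of its $x$‑, $\eta$‑ and $t$‑derivatives, because $\dot g\in\sS_\rho$ forces $|\dot g(\beta_{e^u}\zeta)|\le C_N(e^u\|\zeta\|)^{-N}$ for every $N$ while the chain rule through $\beta_{e^u}$ only produces fixed powers of $e^u$; hence $A\in C^\infty\big(\tsHM\setminus(M\times\{0\}\times\{0\})\big)$. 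Performing the substitution $u\mapsto u+\ln\lambda$ in the integral, and using $\int_0^{\ln\lambda}e^{-mw}\dot g(\beta_{e^w}\zeta)\,dw=\lambda^{-m}\hat\PP(\beta_\lambda\zeta)-\hat\PP(\zeta)$, one computes $\beta_\lambda^* A=\lambda^m A$ for all $\lambda\in\RR^\times_+$, so $A$ is genuinely homogeneous of weight $m$.

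Finally I would check that $\hat\PP-\varphi A\in\sS_\rho(\tsHM\times\RR)$ for the given cut‑off $\varphi$. On the region where $\varphi\equiv1$ one has $\hat\PP-\varphi A=-\int_0^\infty e^{-mu}\dot g(\beta_{e^u}\,\cdot\,)\,du$, which is rapidly decreasing with all derivatives by the estimate above: there $\|\zeta\|$ is bounded below, so the weight $e^{-mu}$ is harmless once $N$ is chosen large. On the complementary $\rho$‑proper set, $\varphi$ vanishes near the zero section, so $\hat\PP-\varphi A$ is smooth, and its support meets each $\rho^{-1}(K)$, $K\subset M$ compact, in a compact set; hence it automatically satisfies the seminorms \eqref{eq:Schwarz_seminorm}. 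Patching the two regions gives $\hat\PP-\varphi A\in\sS_\rho$. Independence of the choice of $\varphi$ is immediate: two admissible cut‑offs differ by a smooth function whose $\rho$‑proper support avoids the zero section, so $(\varphi_0-\varphi_1)A$ again lies in $\sS_\rho$.

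The only real work, and the reason the statement is phrased as a ``bundle analogue'' of Taylor's proposition rather than a literal citation, is the bookkeeping in the estimates: one must track the mixed derivatives $\partial_x^\bfc\partial_\eta^\bfb\partial_t^j\big(\dot g\circ\beta_{e^u}\big)$ under the anisotropic rescaling $\beta_{e^u}$ and verify that the resulting bounds are summable (resp.\ integrable) and uniform over compact subsets of $M$. This is entirely routine --- it is Taylor's argument with the isotropic dilations replaced by $(\beta_\lambda)$ and the scalar Schwartz seminorms replaced by the $C^\infty(M)$‑valued seminorms \eqref{eq:Schwarz_seminorm} --- so I would carry it out only to the extent needed and otherwise refer to \cite{Taylor:microlocal}.
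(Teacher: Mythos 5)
Your argument is correct and is exactly the route the paper takes: the paper's proof consists of the single remark that a bundle analogue of \cite[Proposition 2.2]{Taylor:microlocal} applied to \eqref{eq:homogeneity_of_Fourier_transform2} gives the result, and your construction of $A$ as $\lim_{\lambda\to\infty}\lambda^{-m}\hat\PP(\beta_\lambda\zeta)$ via the integrated cocycle is precisely that analogue, with the details written out. Nothing is missing.
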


\begin{remark}
\label{rmk:symbol_classes}
Symbols of order $m$ are usually defined in terms of the symbol inequalities
 \begin{equation}
 \label{eq:symbol_inequality}
  | \partial_x^\bfa \partial_\eta^\bfb a(x,\eta) | \leq C (1 + \|\eta\|)^{m-|\bfb|}, \qquad (x,\eta)\in T^*M.
 \end{equation}
The import of Proposition \ref{prop:symbol_homogeneity} is that, since the extension of $a=\hat{P}$ to $\hat{\PP} \in C^\infty(T^*M \times \RR)$ is homogeneous at infinity with respect to the  \emph{augmented} dilations $(\beta_\lambda)$, the symbol inequalities \eqref{eq:symbol_inequality} are automatically satisfied for $a$; see Corollary \ref{cor:decay_conditions} below.

 This simple observation explains the economy that is gained by passing to the tangent groupoid.  For instance, in the case of the Heisenberg calculus, a great deal of the technical analysis of  \cite{BeaGre} is absorbed by this remark.
\end{remark}

\section{Conormality and regularity}
\label{sec:conormality_and_regularity}

With Proposition \ref{prop:symbol_homogeneity} in hand, we  now prove the standard properties of pseudodifferential operators for our class $\Psi^m_H(M)$.  This begins with decay estimates on the Fourier transform $\hat{\PP}$.

\subsection{Symbol estimates}

Recall that a \emph{homogeneous norm} $\|\cdot\|_H$ on a graded vector space $V$ is a norm   that scales under the canonical dilations as
\[
  \| \delta_\lambda ( \xi ) \|_H = \lambda \|\xi\|_H,   \qquad \qquad \xi\in V,~ \lambda\in\RR_+.
\]
        
\begin{corollary}
\label{cor:decay_conditions}
  Let $\PP\in \PPsi_H^m(M)$ be homogeneous on the nose outside $[-1,1]$.  Fix a local exponential coordinate system $(x,\xi,t) \in M_0 \times V \times \RR \into \tHM \times \RR$ as in Section \ref{sec:charts} and denote the dual coordinates by $(x,\eta,t) \in M_0 \times V^* \times \RR \into \tsHM \times \RR$.  The Fourier transform $\hat{\PP}$ in these coordinates satisfies the following estimates: For any compact $K\subset M_0$ and multi-indices $\bfa,\bfb, \bfc \in \NN^n$, $k\in\NN$, there is $C = C_{K,\bfa,\bfb,\bfc,k} >0$ such that for all $(x,\eta,t) \in K \times V^* \times \RR$,
\begin{equation}
 \label{eq:decay_conditions}
  |\eta^\mathbf{a} \partial_\eta^\bfb \partial_x^\bfc \partial_{t}^k \hat{\PP}(x,\eta,t)| \leq C (1+\|\eta\|_H+|t|)^{m+|\bfa|_H-|\bfb|_H-k}, 
\end{equation}
  where $\|\slot\|_H$ is any homogeneous norm on $V^*$.
\end{corollary}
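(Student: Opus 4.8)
The plan is to deduce the estimates \eqref{eq:decay_conditions} from the homogeneity structure established in Proposition \ref{prop:symbol_homogeneity}, treating $\hat\PP$ as a function on the \emph{augmented} bundle $\rho:\tsHM\times\RR\to M$ with the dilations $(\beta_\lambda)$ that give the $\RR$-factor degree one. First I would fix a local exponential coordinate system $(x,\xi,t)$, pass to dual coordinates $(x,\eta,t)$, and recall from Proposition \ref{prop:symbol_homogeneity} that $\hat\PP = \varphi A + R$, where $A$ is genuinely $\beta_\lambda$-homogeneous of degree $m$ on $\tsHM\setminus(M\times\{0\}\times\{0\})$, smooth there, $\varphi$ is a bump function vanishing near the zero section of $\rho$ and equal to $1$ outside a $\rho$-proper set, and $R\in\sS_\rho(\tsHM\times\RR)$. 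Since $R$ is fibrewise Schwartz (uniformly on compacta in $x$), the contribution of $R$ to the left-hand side of \eqref{eq:decay_conditions} is bounded by any power of $(1+\|\eta\|_H+|t|)^{-1}$, in particular by the required bound. So the whole question reduces to estimating the homogeneous piece $\varphi A$.

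Next I would carry out the homogeneity bookkeeping for $A$. Observe that the combined ``radius'' $N(x,\eta,t) := \|\eta\|_H + |t|$ is, by construction, a homogeneous ``norm'' of weight $1$ for $(\beta_\lambda)$ on the augmented bundle (here $\|\cdot\|_H$ is the chosen homogeneous norm on $V^*$ and $|t|$ has weight $1$); comparability with $1+\|\eta\|_H+|t|$ on the region $N\geq 1$ is then automatic, and the region $N\le 1$ is handled by smoothness of $\varphi A$ on a compact set in $x$. On the region $N\geq 1$ one writes $(x,\eta,t) = \beta_{N}(x,\eta',t')$ with $(x,\eta',t')$ in the compact ``unit sphere'' $\{N=1\}$, away from which $A$ is smooth, so $|A|$ is bounded by $C N^m$ there. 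For the derivatives one uses the fact, standard for graded dilations, that $\partial_\eta^\bfb$ lowers $\beta$-homogeneity by $|\bfb|_H$, $\partial_t^k$ lowers it by $k$, multiplication by $\eta^\bfa$ raises it by $|\bfa|_H$, while the $\partial_x^\bfc$ derivatives preserve homogeneity (they differentiate along the base of $\rho$, transverse to the dilations) and contribute only a locally uniform constant. Combining these, $\eta^\bfa\partial_\eta^\bfb\partial_x^\bfc\partial_t^k A$ is $\beta_\lambda$-homogeneous of degree $m+|\bfa|_H-|\bfb|_H-k$ away from the zero section, hence bounded by $C\,N^{\,m+|\bfa|_H-|\bfb|_H-k}$ on $N\ge 1$, uniformly for $x\in K$. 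Finally, in the term $\varphi A$ any derivative landing on $\varphi$ is harmless: $\varphi$ is smooth with bounded derivatives, and all its derivatives in $\eta$ and $t$ are supported in a $\rho$-proper, hence in the coordinates $x$-locally bounded, set where $N$ is bounded, so those contributions are $O(1)$ and absorbed into the constant. Assembling the pieces on $\{N\le1\}$ and $\{N\ge1\}$ gives \eqref{eq:decay_conditions}.

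The main obstacle, I expect, is purely notational/organisational rather than deep: one must be careful that the homogeneous norm $N$ on the augmented bundle really does scale with weight exactly $1$ under $(\beta_\lambda)$ — this is why it is essential that the $\RR$-direction was assigned grading degree $1$ in \eqref{eq:augmented_bundle} — and one must correctly track that $|\bfa|_H$ and $|\bfb|_H$ (not the usual orders $|\bfa|$, $|\bfb|$) are the right exponents, which is exactly the content of ``$\partial_{\xi_j}$ has weight $-d_j$.'' A secondary point is to make the splitting $\hat\PP=\varphi A + R$ and all the bounds \emph{uniform in $x$ over a compact $K\subset M_0$}; this is immediate because $A$, $\varphi$ and the $\sS_\rho$-seminorms of $R$ are all continuous in $x$ and $K$ is compact. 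Once these are set up correctly, the estimate follows by the standard ``scale to the unit sphere'' argument for homogeneous functions.
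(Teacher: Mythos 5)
Your proof is correct and follows essentially the same route as the paper: the paper's (one-line) argument likewise applies Proposition \ref{prop:symbol_homogeneity} and observes that $\eta^\bfa\partial_\eta^\bfb\partial_x^\bfc\partial_t^k\hat\PP$ is, modulo $\sS_\rho(\tsHM\times\RR)$, homogeneous at infinity of degree $m+|\bfa|_H-|\bfb|_H-k$ for $(\beta_\lambda)$, which gives the stated bound. Your write-up simply makes explicit the scaling-to-the-unit-sphere bookkeeping that the paper leaves implicit.
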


\begin{proof}
 From Proposition \ref{prop:symbol_homogeneity} the function $\eta^\mathbf{a} \partial_\eta^\bfb \partial_x^\bfc \partial_{t}^k \hat{\PP}$ is equal, modulo a function in $\sS_\rho(\tsHM\times\RR)$, to a function that is homogeneous at infinity of order $m+|\bfa|_H-|\bfb|_H-k$ with respect to $(\beta_\lambda)$.
\end{proof}

We are now very close to the classical theory of pseudodifferential operators of type $\Psi^m_{\rho,\delta}$.    To the expert, the results that follow will be no surprise.


\subsection{Conormality}
\label{sec:conormality}

The first consequence of the estimates \eqref{eq:decay_conditions} is that our $H$-pseudodifferential kernels are proper distributions, i.e.\ are both $r$- and $s$-fibred.   For this, we follow a strategy of Lescure-Manchon-Vassout \cite{LesManVas}, showing that they have wavefront sets that are conormal to the diagonal.  Here is the relevant lemma.  

\begin{lemma}
\label{decomposability}
Let $G$ be a Lie groupoid.  Suppose $v\in \sD'(G)$ has proper support, singular support contained in $G^{(0)}$, and wave front set contained in the conormal to $G^{(0)}$:
  \begin{equation}
  \label{eq:conormality}
  \WF(v) \subseteq \{(x,\eta) \in T^*G|_{G^{(0)}} \;\st (\eta,\xi) = 0 \text{ for all } \xi \in T_xG^{(0)} \}.
 \end{equation}
 Then $v$ is a proper distribution.
\end{lemma}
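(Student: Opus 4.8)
The goal is to show that a properly supported distribution $v$ on a Lie groupoid $G$, whose singular support sits inside $G^{(0)}$ and whose wavefront set is conormal to $G^{(0)}$, can be disintegrated along both the $r$-fibres and the $s$-fibres, i.e.\ $v \in \mu_r(\sE'_r(G)) \cap \mu_s(\sE'_s(G))$ for some (any) transverse measure $\mu$. By symmetry (replace $r$ by $s$), it suffices to produce the $r$-fibred disintegration, i.e.\ to realize $v$ as $\mu_r(u)$ for some $u \in \sE'_r(G)$. The strategy, following Lescure--Manchon--Vassout, is to decompose $v$ locally near $G^{(0)}$ and use the standard fact that a distribution whose wavefront set avoids the conormal directions of a submersion can be restricted to/pushed along the fibres of that submersion. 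Concretely, I would work in a local submersion chart for $r: G \to G^{(0)}$, so that locally $G \cong G^{(0)} \times \RR^k$ with $r$ the first projection, and $G^{(0)}$ identified with $G^{(0)} \times \{0\}$ (using that $G^{(0)}$ is a submanifold transverse to the $r$-fibres).

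\textbf{Key steps.} First I would use the properness of $\supp(v)$ together with a partition of unity subordinate to such submersion charts to reduce to the case where $v$ is supported in a single chart $G^{(0)} \times \RR^k$, with support proper over $G^{(0)}$; the smooth part (away from $G^{(0)}$) is harmless since smooth properly supported densities are manifestly both $r$- and $s$-fibred, so we may assume $\singsupp(v) \subseteq G^{(0)}$. Second, the hypothesis $\WF(v) \subseteq N^*(G^{(0)})$ is exactly the condition that guarantees, by H\"ormander's theorem on the pullback/restriction of distributions (or equivalently on wavefront sets and submersions; see H\"ormander vol.~I, Thm.~8.2.4 and the discussion of pushforward along submersions), that $v$ has a well-defined integration along the $r$-fibres, and more: the map $f \mapsto \langle v, f\rangle$ makes sense fibrewise. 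The precise mechanism: since $\WF(v)$ is conormal to $G^{(0)}$ and $G^{(0)}$ is transverse to every $r$-fibre, $\WF(v)$ contains no direction annihilating the fibre tangent spaces $T(\text{$r$-fibre})$; therefore $v$ restricts to each $r$-fibre as a distribution depending smoothly on the base point, and dually the pairing against $C^\infty(G)$ functions yields a $C^\infty(G^{(0)})$-valued map. This produces the $r$-fibred distribution $u \in \sE'_r(G)$ with $\mu_r(u) = v$ (the transverse measure $\mu$ being used only to reassemble $u$ into $v$, and the construction being independent of $\mu$). Third, I would run the identical argument with $s$ in place of $r$, which is legitimate because $s$ is also a submersion and $G^{(0)}$ is equally transverse to the $s$-fibres, and because the hypotheses on $v$ are symmetric in $r$ and $s$. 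Combining, $v \in \mu_r(\sE'_r(G)) \cap \mu_s(\sE'_s(G)) = \sDp'(G)$, so $v$ is proper by Definition \ref{def:proper_distribution}.

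\textbf{Main obstacle.} The technical heart is the second step: correctly invoking the microlocal machinery to turn the conormality hypothesis on $\WF(v)$ into an honest fibrewise disintegration, with the right uniformity in the base point so that the result lands in $\sE'_r(G)$ (a $C^\infty(G^{(0)})$-linear map $C^\infty(G) \to C^\infty(G^{(0)})$) rather than merely in some weaker space of fibred distributions. One must check that the fibrewise restrictions depend smoothly — not just measurably or continuously — on $x \in G^{(0)}$; this is where the full strength of the wavefront condition (not just the singular support condition) is used, via the continuity/smoothness statements in H\"ormander's restriction theorem applied uniformly over compact families of fibres. A secondary, more bookkeeping-type point is the support condition: one must ensure that the $r$-fibred distribution $u$ one extracts is $r$-properly supported, which follows from the properness hypothesis on $\supp(v)$ but should be stated explicitly. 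I expect no genuine difficulty in the $r \leftrightarrow s$ symmetry or in the patching, which are routine once the local statement is in hand.
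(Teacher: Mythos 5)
Your proof is correct and follows essentially the same route as the paper: the paper's proof likewise observes that, since $T(G^{(0)})$ is complementary to the tangent spaces of the $r$-fibres at the units, the conormal bundle of $G^{(0)}$ meets the conormal bundle of each $r$-fibre only in the zero section, and then invokes Proposition 7 of \cite{LesManVas} to obtain the disintegration along $r$ (and symmetrically along $s$). Your proposal simply unpacks that citation by carrying out the restriction-to-fibres argument via H\"ormander's wavefront calculus directly.
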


\begin{proof}
 Since $T(G^{(0)})$ is complementary to the tangent space of the $r$-fibre at every point, the conormal bundle of $G^{(0)}$ intersects trivially with the conormal bundle of any $r$-fibre.  Now apply \cite[Proposition 7]{LesManVas} to conclude that $v$ can be disintegrated as an $r$-fibred distribution.  Likewise, $v$ disintegrates as an $s$-fibred distribution.
 \end{proof}

We now prove that the conormality condition \eqref{eq:conormality} 
holds for essentially homogeneous $r$-fibred distributions on $\TTHM$.  
Fix a transverse measure $\mu$ on the base of $\TTHM$, so that we have the integration maps $\mu_r:\sE'_r(\TTHM) \to \sD'(\TTHM)$ and $\mu_s:\sE'_s(\TTHM) \to \sD'(\TTHM)$ as in Section \ref{sec:proper_distributions}.

\begin{proposition}
 \label{prop:conormality}
 Let $\PP\in\PPsi_H^m(M)$.  The wave front set of the distribution $\mu_r(\PP) \in \sD'(\TTHM)$ is conormal to the object space, i.e., 
 \begin{align*}
  \WF(\mu_r(\PP)) \subseteq \big\{ (x,\eta, t) \in &T_{(x,t)}^*(\TTHM) \st (x,t)\in M\times\RR = \TTHM^{(0)} \\
    &\text{ and } (\eta,\xi) = 0 \text{ for all }
    \xi \in T_{(x,t)}(\TTHM^{(0)}) \big\}.
 \end{align*}
 Thus $\PP$ is a proper $r$-fibred distribution.
\end{proposition}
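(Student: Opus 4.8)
The plan is to verify the three hypotheses of Lemma \ref{decomposability} for the distribution $v=\mu_r(\PP)$. Proper support of $v$ is immediate, since by definition $\PP\in\PPsi_H^m(M)$ is properly supported and $\supp(\mu_r(\PP))\subseteq\supp(\PP)$; and $\singsupp(v)\subseteq\TTHM^{(0)}$ is precisely Proposition \ref{prop:smooth_off_the_units}. So the entire content of the proposition is the conormality estimate \eqref{eq:conormality}: over a unit $(x,t)\in M\times\RR=\TTHM^{(0)}$ the only wave front covectors of $v$ are those annihilating $T_{(x,t)}(\TTHM^{(0)})$.

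Adding a smooth density changes neither $\WF(v)$ nor the truth of \eqref{eq:conormality}, so I would first reduce to a convenient normal form. The exponential cut-offs of Lemmas \ref{lem:exponential_support} and \ref{lem:local_exponential_support} are constant in $t$ for $t\neq0$, hence preserve both essential homogeneity and homogeneity on the nose; combining them with Proposition \ref{prop:homogeneous_on_the_nose}, we may assume $\PP$ is homogeneous on the nose outside $[-1,1]$ and supported in a single local exponential coordinate patch $\UU$. Under the coordinate diffeomorphism \eqref{eq:trivialization}, $\mu_r(\PP)$ becomes, up to a smooth non-vanishing density factor, the pullback $\PPtilde$ on $M_0\times V\times\RR$, and that diffeomorphism carries the submanifold $\{\xi=0\}=M_0\times\{0\}\times\RR$ exactly onto $\TTHM^{(0)}\cap\UU$. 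It therefore suffices to prove
\[
 \WF(\PPtilde)\;\subseteq\;N^*\bigl(\{\xi=0\}\bigr)\;=\;\bigl\{(x,0,t;\,0,\eta,0):\eta\in V^*\bigr\}.
\]

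This is where Corollary \ref{cor:decay_conditions} is used. Writing $\PPtilde(x,\xi,t)=(2\pi)^{-\dim V}\int_{V^*}e^{i\langle\xi,\eta\rangle}\hat{\PP}(x,\eta,t)\,d\eta$, the estimates $|\partial_x^\bfc\partial_t^k\partial_\eta^\bfb\hat{\PP}(x,\eta,t)|\leq C(1+\|\eta\|_H+|t|)^{m-|\bfb|_H-k}$ show that, for $(x,t)$ ranging over any compact set, $\hat{\PP}(x,\,\cdot\,,t)$ is an ordinary symbol in $\eta$ of positive type---each $\eta_j$-derivative gaining at least one power of the homogeneous norm, since $|\bfb|_H\geq|\bfb|$---with $x$-derivatives of the same order, $t$-derivatives of order lowered by $k$, and all bounds locally uniform. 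Consequently $\PPtilde$ is a conormal distribution with respect to $\{\xi=0\}$: this is the classical argument---non-stationary phase together with repeated integration by parts---showing that the Schwartz kernel of a pseudodifferential operator is conormal to the diagonal (see \cite{Hormander:III}, \cite{LesManVas} in the groupoid setting, or \cite{BeaGre} for the anisotropic symbol classes), carried out here with the additional parameters $x$ and $t$, integration by parts in which annihilates every wave front direction with a nonzero $dx$- or $dt$-component. This gives the displayed inclusion; summing over the locally finite cover and transporting back along the coordinate diffeomorphisms yields \eqref{eq:conormality}, whereupon Lemma \ref{decomposability} shows that $\PP$ is a proper $r$-fibred distribution.

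The one genuinely analytic step is the conormality of $\PPtilde$, and the only subtleties beyond the textbook case are that the parameter $t$ appears on the same footing as $\eta$ in the symbol estimates and that those estimates are phrased with the anisotropic homogeneous norm $\|\eta\|_H$. The latter is only helpful, since differentiation in $\eta_j$ gains $d_j\geq1$ powers of $\|\eta\|_H$ so that $\hat\PP(x,\,\cdot\,,t)$ is at least as regular as an ordinary positive-type symbol; the former is handled by also integrating by parts in $t$, which is legitimate because $\hat\PP$ is smooth in $t$ with $t$-derivatives again symbols in $\eta$ uniformly on compact $t$-intervals, and it removes the $dt$-direction from the wave front set just as integration by parts in $x$ removes the $dx$-direction.
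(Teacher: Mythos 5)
Your proposal is correct and follows essentially the same route as the paper: reduce to a local exponential coordinate patch with $\PP$ homogeneous on the nose, invoke the decay estimates of Corollary \ref{cor:decay_conditions} on $\hat\PP$, and deduce conormality to $\{\xi=0\}$ before applying Lemma \ref{decomposability}. The paper phrases the final analytic step as a Fourier transform in the base variables $(x,t)$ of the cut-off kernel (yielding rapid decay in any conic direction with $(y,s)\neq(0,0)$), which is the same computation as your integration by parts in $x$ and $t$.
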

\begin{proof}

We work in local exponential coordinates $M_0\times V \times \RR \to \TTHM$.  It suffices to prove the proposition for $\PP$ homogeneous on the nose outside $[-1,1]$, so we have the decay estimates \eqref{eq:decay_conditions}.

Let us determine the wave front set of $\mu_r(\PP)$ at a point $(x_0,t_0) \in M\times \RR$.   We may replace $\PP$ with $\chi\PP$, where $\chi\in C^\infty(M\times\RR)$ is some smooth bump function on the base that is $1$ near $(x_0,t_0)$ and supported in some compact set $K\times I \subset M_0\times \RR$. Since the fibrewise Fourier transform is $C^\infty(M\times\RR)$-linear, we still have decay conditions on $\widehat{\chi\PP}$, in particular,
\[
  |\partial_x^\mathbf{c} \partial_{t}^k(\widehat{\chi\PP})(x,\eta,t)| \leq C_{\bfc,k} (1+\|\eta\|_H)^{m}
\]
for every $\bfc\in\NN^n$, $k\in\NN$.  The factor of $|t|$ has been removed from the right hand side, since the support of $\chi\PP$ is bounded in $t$.
 
 Now we apply the Fourier transform $\sF_{(x,t)\to(y,s)}$ in the base variables.  We obtain bounds
 \[
  |y^\mathbf{c} s^k \sF_{(x,\xi,t) \to (y,\eta,s)} (\chi\widetilde{\PP})(y,\eta,s)| \leq C_{\bfc,k} \, \mu(K) \, (1+\|\eta\|_{V^*})^{m} 
 \]
 for every $\bfc\in\NN^n$, $k\in\NN$.  It follows that every $(y,\eta,s)\in\RR^n\times\RR^n\times\RR$ with $(y,s)\neq(0,0)$ admits a conical neighbourhood upon which $\sF_{(x,\xi,t) \to (y,\eta,s)} (\chi\PPtilde)$ has rapid decay.  This proves the conormality of the wave front set.
\end{proof}

\begin{corollary}
Elements of $\PPsi^m_H(M)$ are proper $r$-fibred distributions.  Thus, $H$-pseudodifferential kernels $P\in\Psi^m_H(M)$ are semiregular in both variables (see Proposition \ref{prop:regular}).
\end{corollary}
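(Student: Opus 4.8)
The first sentence is nothing more than the conclusion of Proposition~\ref{prop:conormality}: for $\PP\in\PPsi^m_H(M)$ the distribution $\mu_r(\PP)$ has wave front set conormal to $\TTHM^{(0)}$, singular support on $\TTHM^{(0)}$ (Proposition~\ref{prop:smooth_off_the_units}) and proper support, so Lemma~\ref{decomposability} already says $\PP$ is a proper $r$-fibred distribution. So the plan is to reduce the statement about the Schwartz kernel $P=\PP_1$ to this conormality, by restricting everything to the slice $t=1$ and then feeding the result into Lemma~\ref{decomposability} on the pair groupoid together with Proposition~\ref{prop:regular}.

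Concretely, I would fix the transverse measure $\mu\otimes dt$ on $\TTHM^{(0)}=M\times\RR$, with $\mu$ the density on $M$ chosen in Proposition~\ref{prop:regular}, so that the restriction of the global distribution $\mu_r(\PP)\in\sD'(\TTHM)$ to the hypersurface $\{t=1\}$ is exactly the Schwartz kernel $P$ on $\TTHM|_{t=1}\cong M\times M$. The point to check is that this restriction is legitimate: by Proposition~\ref{prop:conormality} every covector in $\WF(\mu_r(\PP))$ annihilates $\partial_t\in T(\TTHM^{(0)})$, hence has no $dt$-component, so $\WF(\mu_r(\PP))$ is disjoint from the conormal bundle of $\{t=1\}$ and the restriction of distributions is well defined. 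Pulling the wave front set back through the inclusion $M\times M\into\TTHM$ then shows $\WF(P)$ is conormal to the diagonal of $M\times M$, i.e.\ to the unit space of the pair groupoid.

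It remains to apply Lemma~\ref{decomposability} with $G=M\times M$: $P$ has proper support (inherited from $\PP\in\PPsi^m_H(M)$), singular support on the diagonal (Proposition~\ref{prop:smooth_off_the_units}), and the conormal wave front set just obtained, so $P$ is a proper distribution on the pair groupoid --- condition~(3) of Proposition~\ref{prop:regular} --- and the equivalence (3)$\Leftrightarrow$(4) yields that $P$ is properly supported and semiregular in both variables. The only genuinely delicate ingredient is the legitimacy of slicing $\mu_r(\PP)$ at $t=1$ and tracking the restricted wave front set; but this is precisely what the conormality of Proposition~\ref{prop:conormality} was arranged to give, so the argument is short and the rest is bookkeeping with the definitions.
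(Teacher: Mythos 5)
Your argument is correct and takes essentially the paper's (implicit) route: the first assertion is literally the closing sentence of Proposition \ref{prop:conormality}, and the second follows by passing to the slice $t=1$ and invoking the equivalence (3)$\Leftrightarrow$(4) of Proposition \ref{prop:regular}. The only cosmetic difference is that you justify the slicing via H\"ormander's wave-front restriction theorem followed by a second application of Lemma \ref{decomposability} on the pair groupoid, whereas one can simply note that the $r$- and $s$-fibred disintegrations of the proper distribution $\mu_r(\PP)$ (whose existence is exactly what Proposition \ref{prop:conormality} provides) restrict at $t=1$ to $r$- and $s$-fibred disintegrations of $\PP_1$, since the $r$- and $s$-fibres of $\TTHM|_{t=1}$ are precisely those of $M\times M$.
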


\subsection{Algebra structure}

Properness is needed to define the algebra structure on $\Psi^\bullet_H(M)$, since we need to know that $\Cp^\infty(\TTHM;\Omega_r)$ is a two-sided ideal.  This, plus the fact that $\alpha_{\lambda*}$ is an algebra automorphism of $\sE'_{r,s}(\TTHM)$ immediately yields the following.

\begin{theorem}
\label{thm:algebra}
For any $m,m'\in\RR$ we have $\PPsi^m_H(M) * \PPsi_H^{m'}(M) \subseteq \PPsi_H^{m+m'}(M)$.  Thus $\Psi^\bullet_H(M) = \bigcup_{m\in\ZZ}\Psi^m_H(M)$ is a filtered algebra.  
\end{theorem}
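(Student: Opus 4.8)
The plan is to establish the multiplicativity statement $\PPsi^m_H(M)*\PPsi^{m'}_H(M)\subseteq\PPsi^{m+m'}_H(M)$ directly from the definitions, and then deduce that $\Psi^\bullet_H(M)$ is a filtered algebra as a formal consequence. First I would take $\PP\in\PPsi^m_H(M)$ and $\QQ\in\PPsi^{m'}_H(M)$. By the Corollary to Proposition \ref{prop:conormality}, both $\PP$ and $\QQ$ are \emph{proper} $r$-fibred distributions, i.e.\ lie in $\sE'_{r,s}(\TTHM)$, so their convolution $\PP*\QQ$ is defined and again lies in $\sE'_{r,s}(\TTHM)$; in particular it is a properly supported $r$-fibred distribution, which is the first requirement of Definition \ref{def:PPsiDOs}. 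The support of $\PP*\QQ$ is contained in $\supp(\PP)\cdot\supp(\QQ)$, which is $r$-proper, so no support issue arises.

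Next I would check the essential homogeneity of $\PP*\QQ$. Since $\zoom_\lambda$ is a Lie groupoid automorphism, $\zoom_{\lambda*}$ is an algebra automorphism of the convolution algebra $\sE'_{r,s}(\TTHM)$, as noted in Section \ref{sec:action}; hence $\zoom_{\lambda*}(\PP*\QQ)=(\zoom_{\lambda*}\PP)*(\zoom_{\lambda*}\QQ)$. Writing $\zoom_{\lambda*}\PP=\lambda^m\PP+R_\lambda$ and $\zoom_{\lambda*}\QQ=\lambda^{m'}\QQ+S_\lambda$ with $R_\lambda,S_\lambda\in\Cp^\infty(\TTHM;\Omega_r)$, I would expand the product to get
\[
 \zoom_{\lambda*}(\PP*\QQ)=\lambda^{m+m'}\PP*\QQ+\lambda^m\PP*S_\lambda+\lambda^{m'}R_\lambda*\QQ+R_\lambda*S_\lambda.
\]
So I need the three correction terms to be smooth densities. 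For $\lambda^m\PP*S_\lambda$ and $R_\lambda*S_\lambda$ this is exactly the statement that $\Cp^\infty(\TTHM;\Omega_r)$ is a \emph{right} ideal in $\sE'_r(\TTHM)$ (Proposition of \cite{LesManVas}). The remaining term $\lambda^{m'}R_\lambda*\QQ$, a smooth density convolved on the \emph{right} by a fibred distribution, requires knowing that $\Cp^\infty(\TTHM;\Omega_r)$ is also a \emph{left} ideal in the relevant algebra; this is where properness is essential, since $\Cp^\infty(\TTHM;\Omega_r)=\Cp^\infty(\TTHM;\Omega_r)\cap\Cp^\infty(\TTHM;\Omega_s)$ under a transverse measure identification (cf.\ Section \ref{sec:densities}), and $\Cp^\infty(\TTHM;\Omega_s)$ is a left ideal in $\sE'_s$. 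This is the step I expect to be the main technical point: one must verify, using the identification of $\sE'_{r,s}$ as a subalgebra of both $\sE'_r$ and $\sE'_s$ compatibly, that a proper smooth density convolved on either side by a proper fibred distribution stays a proper smooth density. Once all three terms are seen to lie in $\Cp^\infty(\TTHM;\Omega_r)$, we conclude $\zoom_{\lambda*}(\PP*\QQ)-\lambda^{m+m'}\PP*\QQ\in\Cp^\infty(\TTHM;\Omega_r)$ for every $\lambda$, i.e.\ $\PP*\QQ\in\PPsi^{m+m'}_H(M)$.

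Finally, for the statement about $\Psi^\bullet_H(M)$: if $P\in\Psi^m_H(M)$ and $Q\in\Psi^{m'}_H(M)$, pick extensions $\PP,\QQ$ as above; then $\PP*\QQ\in\PPsi^{m+m'}_H(M)$ and $(\PP*\QQ)_1=\PP_1*\QQ_1=P*Q$, since restriction to $t=1$ is a homomorphism from $\sE'_{r,s}(\TTHM)$ onto $\sE'_{r,s}(M\times M)$ (convolution on $\TTHM$ restricts fibrewise to convolution on $M\times M$). Hence $P*Q\in\Psi^{m+m'}_H(M)$, and since $\Psi^m_H(M)\subseteq\Psi^{m+1}_H(M)$ by the inclusion $\PP\mapsto t\PP$ of Corollary \ref{cor:lower_order}, the union $\Psi^\bullet_H(M)=\bigcup_{m\in\ZZ}\Psi^m_H(M)$ is a $\ZZ$-filtered algebra. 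I would keep the write-up short, since the referenced ideal properties and the automorphism property of $\zoom_{\lambda*}$ do almost all the work; essentially the proof is the displayed expansion above together with a sentence invoking the two-sided ideal property on $\sE'_{r,s}$.
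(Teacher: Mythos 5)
Your argument is essentially the paper's proof written out in full: the paper simply observes that properness of elements of $\PPsi^m_H(M)$ (the corollary to Proposition \ref{prop:conormality}) makes $\Cp^\infty(\TTHM;\Omega_r)$ a two-sided ideal in $\sE'_{r,s}(\TTHM)$, and that $\zoom_{\lambda*}$ is an algebra automorphism of that subalgebra --- which is precisely your displayed expansion. One small correction: you have the roles of the two cross terms swapped. The term $R_\lambda*\QQ$ (a smooth density convolved on the right by a fibred distribution) is the one covered by the cited \emph{right}-ideal property $\Cp^\infty(\TTHM;\Omega_r)*\sE'_r(\TTHM)\subseteq\Cp^\infty(\TTHM;\Omega_r)$, whereas $\PP*S_\lambda$ is the term requiring the \emph{left}-ideal property, and hence the properness of $\PP$; with the labels exchanged your argument is exactly the intended one.
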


\begin{proposition}
 \label{prop:symbol_is_algebra_map}
 If $P \in \Psi_H^m(M)$ and $Q \in \Psi^{m'}_H(M)$ then $\sigma_{m+m'}(P*Q) = \sigma_m(P) * \sigma_{m'}(Q)$.
\end{proposition}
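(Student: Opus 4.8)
The plan is to lift the statement to the groupoid $\TTHM$ and exploit the fact that restriction to $t=0$ is compatible with convolution. Concretely, given $P\in\Psi^m_H(M)$ and $Q\in\Psi^{m'}_H(M)$, choose essentially homogeneous extensions $\PP\in\PPsi^m_H(M)$ and $\QQ\in\PPsi^{m'}_H(M)$ with $\PP_1=P$ and $\QQ_1=Q$. By Theorem~\ref{thm:algebra} the convolution $\PP*\QQ$ lies in $\PPsi^{m+m'}_H(M)$, and its restriction to the unit fibre is $(\PP*\QQ)_1 = \PP_1*\QQ_1 = P*Q$, so $\PP*\QQ$ is an admissible extension of $P*Q$. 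By Definition~\ref{def:cosymbol_map}, it therefore computes the cosymbol: $\sigma_{m+m'}(P*Q) = (\PP*\QQ)_0$ modulo $\Cp^\infty(\THM;\Omega_r)$.

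The key step is then the identity $(\PP*\QQ)_0 = \PP_0 * \QQ_0$, i.e. that convolution of $r$-fibred distributions on $\TTHM$ is compatible with restriction to the subgroupoid $\THM = \TTHM|_0$. This is essentially a statement about restricting convolution on a Lie groupoid to a clopen (here, $t=0$) reduction: the multiplication on $\TTHM$ preserves each fibre $\TTHM|_t$ (since the $t$-coordinate is locally constant on composable pairs, the two factors and the product all lie in the same $t$-slice), so the convolution formula for $\sE'_r(\TTHM)$ restricted to $t=0$ is exactly the convolution formula for $\sE'_r(\THM)$. One must check this is legitimate at the level of the fibred distributions—that is, that $(\PP*\QQ)_t = \PP_t * \QQ_t$ for the bundle-of-groups fibre $\THM$—using the description of the convolution product in Section~\ref{sec:convolution} together with the fact, established in Proposition~\ref{prop:conormality}, that the relevant distributions are proper, so the convolution is well-defined and the smooth family decomposes fibrewise. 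Since smoothing operators remain smoothing under this restriction (the ideal $\Cp^\infty(\TTHM;\Omega_r)$ restricts into $\Cp^\infty(\THM;\Omega_r)$), the identity descends to the quotient.

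Finally, $\PP_0$ and $\QQ_0$ represent $\sigma_m(P)$ and $\sigma_{m'}(Q)$ respectively by definition of the cosymbol map, so $(\PP*\QQ)_0 = \PP_0*\QQ_0$ represents $\sigma_m(P)*\sigma_{m'}(Q)$ in $\Sigma^{m+m'}_H(M)$, which is precisely the claim. The main obstacle I anticipate is the careful bookkeeping in the middle step: verifying that the convolution of the $r$-fibred distributions on $\TTHM$ genuinely restricts fibrewise at $t=0$ rather than merely "up to lower-order terms," which requires invoking properness (Proposition~\ref{prop:conormality}) so that the disintegration along $r$-fibres behaves well under the restriction, and using that $\THM$ sits inside $\TTHM$ as a closed-and-open reduction over $M\times\{0\}$.
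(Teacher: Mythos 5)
Your proposal is correct and follows essentially the same route as the paper's (two-line) proof: extend $P$ and $Q$ to $\PP,\QQ$, use Theorem~\ref{thm:algebra} to see $\PP*\QQ\in\PPsi^{m+m'}_H(M)$ extends $P*Q$, and read off the cosymbol at $t=0$. The fibrewise compatibility $(\PP*\QQ)_t=\PP_t*\QQ_t$, which you rightly flag and justify via the fact that the groupoid multiplication preserves each slice $\TTHM|_t$, is exactly the point the paper leaves implicit in ``the result follows.''
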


\begin{proof}
 If $\PP \in \PPsi^m_H$ and $\QQ \in \PPsi_H^{m'}$ extend $P$ and $Q$, respectively, then $\PP*\QQ \in \PPsi_H^{m+m'}$ extends $P*Q$.  The result follows.
\end{proof}

\subsection{Regularity}
\label{sec:regularity}

Another consequence of the decay conditions \eqref{eq:decay_conditions}
is the regularity of $H$-pseudodifferential kernels of large negative order.

\begin{lemma}
 \label{lem:regularity}
With $k \geq 0$, suppose $\PP \in \PPsi_H^{-d_H-k-1}(M)$ is supported in some local exponential coordinate patch, and let $\PPtilde$ be its pullback under local exponential coordinates $(x,\xi,t) \in M_0\times V \times \RR \supseteq \VV  \to \UU \subset \TTHM$ as in Definition \ref{def:local_pullback}.  For any $\mathbf{a}, \mathbf{b} \in \NN^n$, $c\in\NN$ with $|\mathbf{a}|_H \leq k$, we have
 \[
  \partial_\xi^\mathbf{a} \partial_x^\mathbf{b} \partial_{t}^c \PPtilde \in C(M_0\times V \times\RR ; \Omega_s).
 \]
\end{lemma}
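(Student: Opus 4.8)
The plan is to work on the Fourier transform side, using the decay estimates \eqref{eq:decay_conditions} of Corollary \ref{cor:decay_conditions}. By Proposition \ref{prop:homogeneous_on_the_nose} and Lemma \ref{lem:exponential_support}, after modifying $\PP$ by an element of $\Cp^\infty(\TTHM;\Omega_r)$ (which does not affect the statement), we may assume $\PP$ is homogeneous on the nose outside $[-1,1]$ and supported in the local exponential coordinate patch, so that $\PPtilde = p\,\omega$ for a smooth density $p$ times fibrewise Lebesgue measure $\omega$, and $\hat{\PP} = \hat{p}$ satisfies \eqref{eq:decay_conditions} with $m$ replaced by $-d_H-k-1$. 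I would then write the inverse fibrewise Fourier transform
\[
 \partial_\xi^\mathbf{a} \partial_x^\mathbf{b} \partial_t^c \, p(x,\xi,t) = (2\pi)^{-n} \int_{V^*} e^{i(\eta,\xi)} (i\eta)^\mathbf{a} \partial_x^\mathbf{b} \partial_t^c \hat{p}(x,\eta,t)\,d\eta,
\]
and check that the integrand is absolutely integrable in $\eta$, locally uniformly in $(x,t)$, which gives continuity of the left-hand side by dominated convergence.

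The key estimate is the integrability. Applying \eqref{eq:decay_conditions} with the multi-index $\mathbf{a}$ (and $\mathbf{b} = 0$ there, so the monomial weight is $|\mathbf{a}|_H$), the derivative $\partial_x^\mathbf{b}\partial_t^c$ contributes nothing to the decay order in our favour but costs nothing either since those estimates already allow arbitrary $\mathbf{b},c$; we get
\[
 |(i\eta)^\mathbf{a}\partial_x^\mathbf{b}\partial_t^c \hat{p}(x,\eta,t)| \leq C(1+\|\eta\|_H+|t|)^{-d_H-k-1+|\mathbf{a}|_H} \leq C(1+\|\eta\|_H)^{-d_H-1+(|\mathbf{a}|_H-k)} \leq C(1+\|\eta\|_H)^{-d_H-1},
\]
using $|\mathbf{a}|_H\leq k$. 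A function decaying like $(1+\|\eta\|_H)^{-d_H-1}$ is integrable over $V^*$ with respect to Lebesgue measure, because under the dilations $(\delta'_\lambda)$ on $V^*$, Lebesgue measure scales by $\lambda^{d_H}$ (the homogeneous dimension of $V^*$ equals that of $V$), so passing to "polar" coordinates adapted to the homogeneous norm yields a radial integral $\int_1^\infty \lambda^{-d_H-1}\lambda^{d_H-1}\,d\lambda < \infty$. The bound is uniform for $x$ in a compact $K\subset M_0$ and locally uniform in $t$ (indeed the $|t|$-dependence only helps), so the integral defines a continuous function of $(x,\xi,t)$, and since $\PPtilde = p\,\omega$ it follows that $\partial_\xi^\mathbf{a}\partial_x^\mathbf{b}\partial_t^c\PPtilde \in C(M_0\times V\times\RR;\Omega_s)$; note $\Omega_s$ and $\Omega_r$ agree with fibrewise Lebesgue density here up to a smooth positive factor.

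The main obstacle, such as it is, is bookkeeping rather than anything deep: one must be careful that the fibrewise Fourier transform and its inverse are being applied in the graded coordinates where the estimates \eqref{eq:decay_conditions} hold, that the monomial weights $|\mathbf{a}|_H$ (homogeneous order) rather than $|\mathbf{a}|$ (usual order) appear, and that the threshold $-d_H-k-1$ is exactly what is needed to produce an integrable tail with $k$ derivatives to spare. One should also justify differentiating under the integral sign in $x$ and $t$ — this is routine since every $\partial_x^\mathbf{b}\partial_t^c$ derivative of the integrand enjoys the same decay by \eqref{eq:decay_conditions}, so the differentiated integrals converge locally uniformly. No wavefront-set or groupoid-structure input is needed beyond what is already assembled; the lemma is essentially the elementary Fourier-analytic fact that rapid-enough polynomial decay of a symbol forces continuity (indeed $C^j$ with enough extra decay) of the kernel, transplanted to the vector-bundle setting via the local exponential coordinates.
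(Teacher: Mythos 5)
Your proof is correct and follows essentially the same route as the paper: reduce to $\PP$ homogeneous on the nose so that the decay estimates \eqref{eq:decay_conditions} apply, observe that $\eta^{\mathbf{a}}\partial_x^{\mathbf{b}}\partial_t^c\hat{\PP}$ is then bounded by $C(1+\|\eta\|_H)^{-d_H-1}$, which is integrable, and take the inverse fibrewise Fourier transform. Your extra remarks (polar-coordinate integrability for the homogeneous norm, differentiation under the integral, the harmless $\Omega_r$ versus $\Omega_s$ discrepancy) merely flesh out details the paper leaves implicit or cites to the literature.
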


\begin{proof}
We assume $\PP$ is homogeneous on the nose outside $[-1,1]$, so we have the decay estimates \eqref{eq:decay_conditions}.  For any multi-indices $\bfa,\bfb\in\NN^n$, $c\in\NN$ with $|\mathbf{a}|_H \leq k$, and for each $(x,t)\in {M_0}\times\RR$, the function $\eta \mapsto \eta^\mathbf{a} \partial_x^\mathbf{b} \partial_{t}^c \hat{\PP}(x,\eta,t)$ is bounded in absolute value by $C' ( 1 + \|\eta\|_H )^{-d_H-1}$, which is integrable (see \cite{NagSte}).  
Taking fibrewise Fourier transforms gives $\partial_\xi^\mathbf{a} \partial_x^\mathbf{b} \partial_{t}^c {\PPtilde} \in C({M_0}\times V \times \RR; \Omega_r)$.
\end{proof}

\begin{theorem}
 \label{thm:k-regularity}
Let $M$ be a smooth manifold equipped with a Lie filtration of depth $N$ and homogeneous dimension $d_H$.  
For any $k\geq0$ we have $\PPsi_H^{-d_H-kN-1}(M) \subseteq C^k(\TTHM;\Omega_r)$ and $\Psi_H^{-d_H-kN-1}(M) \subseteq C^k(M\times M;\Omega_r)$.
\end{theorem}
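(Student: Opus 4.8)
The strategy is to reduce the global statement to the local statement about $\PPtilde$ proven in Lemma~\ref{lem:regularity}, and then to relate the order $-d_H - kN - 1$ to the requirement $|\mathbf a|_H \leq k$ appearing in that lemma via the inequality $|\mathbf a| \leq |\mathbf a|_H \leq N|\mathbf a|$. First I would invoke Lemma~\ref{lem:local_exponential_support} to write $\PP = \sum_i \PP_i + \QQ$ with each $\PP_i$ supported in a local exponential coordinate patch and $\QQ \in \Cp^\infty(\TTHM;\Omega_r)$; since $C^k$-regularity is a local property and the sum is locally finite, it suffices to treat a single $\PP_i$, i.e.\ we may assume $\PP \in \PPsi_H^{-d_H-kN-1}(M)$ is supported in one local exponential coordinate patch.

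Next I would observe that being $C^k$ as a section of $\Omega_r$ on $\TTHM$ is equivalent, in the local exponential coordinates $(x,\xi,t)\in M_0\times V\times\RR$, to the statement that $\PPtilde = p\,\omega$ with $p$ having continuous partial derivatives $\partial_\xi^{\mathbf a}\partial_x^{\mathbf b}\partial_t^c p$ for all $|\mathbf a| + |\mathbf b| + c \leq k$. The key point is that for a multi-index $\mathbf a\in\NN^n$ with $|\mathbf a|\le k$ one has $|\mathbf a|_H = \sum_j a_j d_j \leq N\sum_j a_j = N|\mathbf a| \leq Nk$, since every weight $d_j$ satisfies $1 \leq d_j \leq N$. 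Thus if $\PP \in \PPsi_H^{-d_H - kN - 1}(M)$, then $\PP$ lies in $\PPsi_H^{-d_H - k' - 1}(M)$ with $k' := kN$, and for any $\mathbf a$ with $|\mathbf a| \leq k$ we have $|\mathbf a|_H \leq Nk = k'$. Lemma~\ref{lem:regularity} (applied with $k$ there equal to $k' = kN$) then gives $\partial_\xi^{\mathbf a}\partial_x^{\mathbf b}\partial_t^c \PPtilde \in C(M_0\times V\times\RR;\Omega_s)$ for all such $\mathbf a$ and all $\mathbf b, c$; in particular for all $|\mathbf a| + |\mathbf b| + c \leq k$, which is exactly $C^k$-regularity of $\PPtilde$, hence of $\PP$. (One should note $\Omega_s$ and $\Omega_r$ agree as smooth line bundles on the relevant patch once a local trivialization by Lebesgue measure is fixed, as in the proof of Lemma~\ref{lem:smooth_cosymbol}, so there is no discrepancy between the $C$-statement for $\Omega_s$ and the $C^k$-statement for $\Omega_r$.)

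Finally, restricting to $t=1$ gives $\Psi_H^{-d_H-kN-1}(M)\subseteq C^k(M\times M;\Omega_r)$: the local exponential coordinate charts restrict at $t=1$ to honest coordinate charts near the diagonal of $M\times M$ via $\Exp^{\nabla^\psi}$, so $C^k$-regularity of $\PPtilde$ transfers to $C^k$-regularity of $P=\PP_1$ near the diagonal, while Proposition~\ref{prop:smooth_off_the_units} already gives smoothness off the diagonal.

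The main obstacle is the bookkeeping in the second paragraph: matching the hypothesis of Lemma~\ref{lem:regularity}, which is phrased in terms of $|\mathbf a|_H$, against the $C^k$ requirement, which involves the ordinary order $|\mathbf a|$, and getting the constant $N$ in the exponent $-d_H - kN - 1$ to come out correctly. The inequality $|\mathbf a|_H \le N|\mathbf a|$ is what forces the factor $N$, and one must be careful that it is applied in the right direction (we need $|\mathbf a|_H$ \emph{small} given $|\mathbf a|$ small, which is the easy direction, so the only real cost is the worsening of the order by a factor $N$). Everything else is routine gluing and coordinate-change.
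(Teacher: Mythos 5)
Your proof is correct and follows the same route as the paper: reduce to a single local exponential coordinate patch via Lemma~\ref{lem:local_exponential_support}, then apply Lemma~\ref{lem:regularity} with its parameter set to $kN$, using $|\mathbf{a}|_H \leq N|\mathbf{a}|$ to convert the homogeneous-order hypothesis into ordinary $C^k$-regularity. Your additional remarks (the $\Omega_s$ versus $\Omega_r$ trivialization and the restriction to $t=1$) are correct fillings-in of details the paper leaves implicit.
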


\begin{proof}
It suffices to consider $\PP\in\PPsi_H^{-d_h-kN-1}(M)$ supported in some local exponential coordinate patch,  thanks to Lemma \ref{lem:local_exponential_support}.  The result then follows from Lemma \ref{lem:regularity} since  $|\bfa|_H \leq N |\bfa|$.
\end{proof}

\begin{corollary}
\label{cor:smoothing}
If we put $\Psi^{-\infty}_H(M) = \bigcap_{m\in\ZZ} \Psi^m_H(M)$, then $\Psi_H^{-\infty}(M) = \Psi^{-\infty}(M) = \Cp^\infty(M\times M;\Omega_r)$ is the algebra of properly supported smoothing operators on $M$.
\end{corollary}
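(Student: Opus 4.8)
The plan is to establish the two inclusions $\bigcap_{m\in\ZZ}\Psi^m_H(M)\subseteq\Cp^\infty(M\times M;\Omega_r)$ and $\Cp^\infty(M\times M;\Omega_r)\subseteq\bigcap_{m\in\ZZ}\Psi^m_H(M)$; the remaining identification $\Cp^\infty(M\times M;\Omega_r)=\Psi^{-\infty}(M)$ is simply the recollection recorded in the Example of Section~\ref{sec:densities}, namely that $\Cp^\infty(M\times M;\Omega_r)$ is the algebra of Schwartz kernels of properly supported smoothing operators.

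For the inclusion ``$\subseteq$'', I would take $P\in\bigcap_{m\in\ZZ}\Psi^m_H(M)$ and, for each integer $k\geq0$, apply Theorem~\ref{thm:k-regularity} to $P\in\Psi_H^{-d_H-kN-1}(M)$ to conclude $P\in C^k(M\times M;\Omega_r)$; intersecting over all $k$ gives $P\in C^\infty(M\times M;\Omega_r)$. Since $P$ is moreover a proper $r$-fibred distribution (by the Corollary following Proposition~\ref{prop:conormality}), in particular it has proper support, so $P\in\Cp^\infty(M\times M;\Omega_r)$.

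For the inclusion ``$\supseteq$'', the key observation is that $\Cp^\infty(\TTHM;\Omega_r)\subseteq\PPsi^m_H(M)$ for every $m\in\RR$: indeed, if $\KK\in\Cp^\infty(\TTHM;\Omega_r)$, then, because the zoom action preserves this ideal (Section~\ref{sec:action}), both $\zoom_{\lambda*}\KK$ and $\lambda^m\KK$ lie in $\Cp^\infty(\TTHM;\Omega_r)$, so the defining condition~\eqref{eq:essentially_homogeneous} holds trivially. It then suffices to note that any $k\in\Cp^\infty(M\times M;\Omega_r)$ extends to such a $\KK$: choosing $\phi\in\Cc^\infty(\RR^\times)$ with $\phi(1)=1$, the density $\KK:=\phi\cdot k$, with $\phi$ pulled back from the $\RR^\times$-factor, is supported in $(M\times M)\times\RR^\times$, where $\TTHM$ carries the product smooth structure, and it vanishes near $t=0$; hence it extends by zero to an element of $\Cp^\infty(\TTHM;\Omega_r)$, and $\KK_1=k$. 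Thus $k=\KK_1\in\Psi^m_H(M)$ for every $m\in\ZZ$.

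Combining the two inclusions gives $\bigcap_{m\in\ZZ}\Psi^m_H(M)=\Cp^\infty(M\times M;\Omega_r)=\Psi^{-\infty}(M)$. I expect no real obstacle: the ``$\subseteq$'' direction is a direct consequence of the regularity estimate already proved in Theorem~\ref{thm:k-regularity}, and the ``$\supseteq$'' direction is elementary once one observes that a smoothing kernel can be realized by a density supported away from $t=0$. The only point to keep in view is that all the constructions respect the proper support condition, which is immediate in both steps (in the second, because $\phi$ has compact support in $\RR^\times$).
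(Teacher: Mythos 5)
Your proof is correct and follows essentially the route the paper intends: the nontrivial inclusion $\bigcap_m\Psi^m_H(M)\subseteq \Cp^\infty(M\times M;\Omega_r)$ is exactly the content of Theorem~\ref{thm:k-regularity} applied for all $k$, and the reverse inclusion is the paper's earlier remark that $\Cp^\infty(\TTHM;\Omega_r)$ sits trivially inside every $\PPsi^m_H(M)$, combined with your (correct) cut-off extension of a smoothing kernel to a smooth density on $\TTHM$ supported away from $t=0$. No gaps.
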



\section{$H$-ellipticity and Parametrices}
\label{sec:H-ellipticity}
\label{sec:parametrices}


The convolution algebra $\sE'_r(G)$ on any Lie groupoid $G$ has an identity element $I$, namely the evaluation at the units:
\[
  \ip{I,f}(x) := f(x), \qquad f\in C^\infty(G), ~x\in G^{(0)}.
\]
We will denote the identity elements of $\sE'_r(\TTHM)$, $\sE_r'(M\times M)$ and $\sE'_r(\THM)$ by $\II$, $\II_1=I$ and $\II_0$, respectively.

\begin{definition}
 \label{def:H-elliptic}
 An $H$-pseudodifferential operator $P\in\Psi^m_H(M)$  will be called {\em $H$-elliptic} if its principal cosymbol $\sigma_m(P)$ admits a convolution inverse in $\sE'_r(T_HM)/\Cp^\infty(T_HM;\Omega_r)$.
\end{definition}

\begin{lemma}
 \label{lem:inverse_is_homogeneous}
 The convolution inverse of a principal cosymbol of order $m$ (if it exists) is a principal cosymbol of order $-m$.  
\end{lemma}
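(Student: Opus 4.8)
The plan is to use the uniqueness of convolution inverses together with the fact that $\delta_{\lambda*}$ is an algebra automorphism of $\sE'_r(T_HM)/\Cp^\infty(T_HM;\Omega_r)$ that scales a cosymbol of order $m$ by $\lambda^m$. Concretely, let $K\in\Sigma_H^m(M)$ be a principal cosymbol of order $m$, so $\delta_{\lambda*}K=\lambda^m K$ in the quotient, and suppose $L\in\sE'_r(T_HM)/\Cp^\infty(T_HM;\Omega_r)$ is a two-sided convolution inverse, $K*L=L*K=\II_0$.

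\begin{proof}
Let $K$ represent the given principal cosymbol of order $m$, and let $L$ be its convolution inverse, so that $K*L = L*K = \II_0$ modulo $\Cp^\infty(\THM;\Omega_r)$.  The identity element $\II_0\in\sE'_r(\THM)$ is the evaluation at the units, which is supported on the zero section of $\THM$; hence it is manifestly invariant under the dilations, $\delta_{\lambda*}\II_0 = \II_0$ for all $\lambda\in\RR^\times_+$.  Applying the algebra automorphism $\delta_{\lambda*}$ of $\sE'_r(\THM)/\Cp^\infty(\THM;\Omega_r)$ to the equation $K*L=\II_0$ gives
\[
 (\delta_{\lambda*}K)*(\delta_{\lambda*}L) = \delta_{\lambda*}\II_0 = \II_0
\]
in the quotient, and since $\delta_{\lambda*}K = \lambda^m K$, this reads $\lambda^m K * (\delta_{\lambda*}L) = \II_0$, that is, $K * (\lambda^m \delta_{\lambda*}L) = \II_0$.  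The same computation applied to $L*K = \II_0$ gives $(\lambda^m\delta_{\lambda*}L)*K = \II_0$.  Thus $\lambda^m\delta_{\lambda*}L$ is also a two-sided convolution inverse of $K$ in the quotient algebra.  By uniqueness of inverses in any unital algebra, $\lambda^m\delta_{\lambda*}L = L$, i.e.\ $\delta_{\lambda*}L = \lambda^{-m}L$ modulo $\Cp^\infty(\THM;\Omega_r)$.  Hence $L$ represents a principal cosymbol of order $-m$ in the sense of Definition \ref{def:cosymbol_space}.
\end{proof}

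The only points that require any care are that $\delta_{\lambda*}$ genuinely descends to an automorphism of the quotient algebra $\sE'_r(\THM)/\Cp^\infty(\THM;\Omega_r)$ — which follows because $\delta_{\lambda*}$ is an algebra automorphism of $\sE'_r(\THM)$ preserving the ideal $\Cp^\infty(\THM;\Omega_r)$, exactly as for the zoom action on $\TTHM$ — and that the identity $\II_0$ is dilation-invariant, which is immediate from its description as evaluation at the units.  There is no real obstacle here; the statement is essentially the bookkeeping observation that inverting a homogeneous element of negative-degree weight produces an element of the opposite weight, and uniqueness of the inverse does the rest.
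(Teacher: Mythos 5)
Your proof is correct and follows essentially the same route as the paper: the paper's argument is the chain $\delta_{\lambda*}v \equiv (\delta_{\lambda*}v)*u*v \equiv \lambda^{-m}\delta_{\lambda*}(v*u)*v \equiv \lambda^{-m}v$, which is just the uniqueness-of-two-sided-inverses argument you invoke, written out explicitly. The supporting observations you flag (that $\delta_{\lambda*}$ descends to an automorphism of the quotient and that $\II_0$ is dilation-invariant) are exactly what the paper uses implicitly.
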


\begin{proof}
 Let $u \in \sE'_r(\THM)$ represent a principal cosymbol of order $m$ with convolution inverse $v \in \sE'_r(\THM)$ modulo $\Cp^\infty(\THM;\Omega_r)$.  
Working modulo $\Cp^\infty(T_HM;\Omega_r)$, we have
 \begin{multline*}
  \delta_{\lambda*}v \equiv (\delta_{\lambda*}v) * u * v \equiv (\delta_{\lambda*}v) * (\lambda^{-m}\delta_{\lambda*}u) * v \\
  \equiv \lambda^{-m} \delta_{\lambda*}(I) * v = \lambda^{-m}v,
 \end{multline*}
 for all $\lambda\in\RR^\times_+$.  The result follows.
\end{proof}

\begin{lemma}
 \label{lem:parametrix-1}
 Let $P\in\Psi^m_H(M)$ be $H$-elliptic.  Then there is $Q\in\Psi^{-m}_H(M)$ such that $P * Q - I$ and $Q*P - I$ are in $\Psi^{-1}_H(M)$.
\end{lemma}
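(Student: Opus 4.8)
The plan is to construct the parametrix $Q$ by lifting the convolution inverse of the principal cosymbol to the tangent groupoid and correcting it by an essentially homogeneous smooth cut-off.  First I would use $H$-ellipticity: by Definition~\ref{def:H-elliptic}, $\sigma_m(P)$ has a convolution inverse in $\sE'_r(\THM)/\Cp^\infty(\THM;\Omega_r)$, and by Lemma~\ref{lem:inverse_is_homogeneous} this inverse is a cosymbol of order $-m$; call a representative $K\in\sE'_r(\THM)$.  By Lemma~\ref{lem:cosymbols_extend} the cosymbol map $\sigma_{-m}$ is surjective, so there is $Q\in\Psi^{-m}_H(M)$ with $\sigma_{-m}(Q) = [K]$, represented by some $\QQ\in\PPsi^{-m}_H(M)$ with $\QQ_0 \equiv K$ modulo $\Cp^\infty(\THM;\Omega_r)$.

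Next I would compute principal cosymbols of the products.  By Theorem~\ref{thm:algebra}, $P * Q \in \Psi^0_H(M)$, and by Proposition~\ref{prop:symbol_is_algebra_map} its principal cosymbol is $\sigma_0(P * Q) = \sigma_m(P) * \sigma_{-m}(Q) = [\sigma_m(P)] * [K] = [\II_0]$, the class of the identity cosymbol.  Likewise $\sigma_0(I) = [\II_0]$, since the identity operator $I$ has extension $\II\in\PPsi^0_H(M)$ (Example~\ref{ex:identity}) with $\II_0 = \II_0$.  Hence $\sigma_0(P*Q - I) = 0$ in $\Sigma^0_H(M)$, so by Corollary~\ref{cor:lower_order} we have $P*Q - I \in \ker\sigma_0 = \Psi^{-1}_H(M)$.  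The same argument applied on the other side gives $\sigma_0(Q*P) = [K] * [\sigma_m(P)]$; here I would note that a two-sided convolution inverse modulo $\Cp^\infty(\THM;\Omega_r)$ is unique in the quotient algebra, so $[K]$ is a left inverse of $[\sigma_m(P)]$ as well, giving $[K]*[\sigma_m(P)] = [\II_0]$ and therefore $Q*P - I \in \Psi^{-1}_H(M)$ by the same appeal to Corollary~\ref{cor:lower_order}.

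The one point requiring a little care — and the main obstacle — is the passage between one-sided and two-sided inverses in the cosymbol algebra.  Definition~\ref{def:H-elliptic} as stated asks only for \emph{a} convolution inverse; to run the argument above cleanly I would observe that $\sE'_r(\THM)/\Cp^\infty(\THM;\Omega_r)$ is a unital associative algebra, so if $[\sigma_m(P)]$ has both a left inverse and a right inverse they coincide, and if the definition is read as "two-sided inverse" there is nothing to check.  (If only a one-sided inverse is assumed, one shows the same element also inverts on the other side by the standard algebra manipulation $[K] \equiv [K]*([\sigma_m(P)]*[K']) = ([K]*[\sigma_m(P)])*[K'] \equiv [K']$.)  Everything else is routine: combine $\sigma_0(P*Q - I) = 0$ and $\sigma_0(Q*P - I) = 0$ with Corollary~\ref{cor:lower_order} to conclude $P*Q - I,\, Q*P - I \in \Psi^{-1}_H(M)$, which is the claim.
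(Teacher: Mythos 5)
Your proof is correct and follows essentially the same route as the paper: lift the inverse cosymbol (using Lemma \ref{lem:inverse_is_homogeneous} and the surjectivity of the cosymbol map) to some $\QQ\in\PPsi^{-m}_H(M)$, observe via the multiplicativity of the cosymbol map that $\sigma_0(P*Q-I)$ and $\sigma_0(Q*P-I)$ vanish, and conclude with Corollary \ref{cor:lower_order}. Your remark on one-sided versus two-sided inverses is a reasonable point of care, but the paper's Definition \ref{def:H-elliptic} is intended as a two-sided inverse, so nothing further is needed.
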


\begin{proof}
 Fix $\PP \in \PPsi_H^m$ extending $P$.  From the previous lemma and the surjectivity of the cosymbol map (Lemma \ref{lem:cosymbols_extend}) we can find $\QQ\in \PPsi_H^{-m}$ such that $\PP_0 * \QQ_0 \equiv \II_0 \equiv \QQ_0 * \PP_0$ modulo $\Cp^\infty(T_HM;\Omega_r)$.  Put  $Q = \QQ_1$.  The cosymbols $\sigma_0(P*Q - I)$ and $\sigma_0(Q*P - I)$ vanish, and the result follows from Lemma \ref{cor:lower_order}.
\end{proof}

\begin{definition}
 \label{def:parametrix}
 Let $P\in\Psi^m_H(M)$.  An $H$-pseudodifferential operator $Q\in\Psi^{-m}_H(M)$ is called a {\em parametrix for $P$} if $P*Q -I \in \Psi^{-\infty}(M)$ and $Q*P -I \in \Psi^{-\infty}(M)$.
\end{definition}

To construct parametrices, we need convergence of asymptotic expansions.

\begin{definition}
\label{def:asymptotic}
An \emph{asymptotic series} of $H$-pseudodifferential operators is a series of the form $P_0 + P_1 + P_2 + \cdots$ where $P_k\in\Psi_H^{m_k}(M)$ and the orders $m_k$ decrease to $-\infty$.  We will say \emph{$P\in\Psi^\bullet_H(M)$ is an asymptotic limit} of this series, denoted
\[
  P \sim P_0 + P_1 + P_2 + \cdots,
\]
if $P - (\sum_{k=1}^{j-1} P_k) \in \Psi_H^{m_j}(M)$ for all $j\in\NN$.
\end{definition}

\begin{theorem}
 \label{thm:completeness}
Every asymptotic series admits an asymptotic limit.
\end{theorem}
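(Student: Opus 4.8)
The plan is to adapt the classical Borel-type construction of asymptotic sums to the groupoid setting, using the cocycle estimates already established. Given an asymptotic series $\sum_k P_k$ with $P_k\in\Psi_H^{m_k}(M)$ and $m_k\searrow-\infty$, first lift each $P_k$ to some $\PP^{(k)}\in\PPsi_H^{m_k}(M)$ that is homogeneous on the nose outside $[-1,1]$ (Proposition \ref{prop:homogeneous_on_the_nose}) and supported in a fixed global exponential coordinate patch (Lemma \ref{lem:exponential_support}, combined with the remark that exponential cut-offs are homogeneous on the nose). The goal is to produce $\PP\in\PPsi_H^{m_0}(M)$ with $\PP-\sum_{k<j}\PP^{(k)}\in\PPsi_H^{m_j}(M)$ for every $j$; restricting to $t=1$ then gives the desired $P=\PP_1$.

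The key idea is to truncate each summand near $t=0$ so that the tail converges. Working with the pullbacks $\PPtilde^{(k)}\in\sE'_r(\tHM\times\RR)$, I would set
\[
 \PPtilde := \sum_{k\geq0} \chi(\varepsilon_k^{-1} t)\,\PPtilde^{(k)},
\]
where $\chi\in C^\infty(\RR)$ vanishes near $0$ and equals $1$ for $|t|\geq1$, and $(\varepsilon_k)$ is a sequence decreasing rapidly to $0$, chosen so the sum converges in $\sE'_r(\tHM\times\RR)$ together with all its $t$-derivatives. Here one uses that $\PPtilde^{(k)}$ vanishes to infinite order at $t=0$ after truncation by $\chi(\varepsilon_k^{-1}t)$, exactly as in the proof of Proposition \ref{prop:vanishing}; the cutoff kills finitely many problematic low-order terms, and the Fourier-side decay estimates of Corollary \ref{cor:decay_conditions} (for the fixed representatives, which are homogeneous on the nose) give uniform control so that a standard diagonal choice of $\varepsilon_k$ makes the series and all its derivatives converge. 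Each truncated summand $\chi(\varepsilon_k^{-1}t)\PPtilde^{(k)}$ differs from $\PPtilde^{(k)}$ by a term supported in $|t|\leq\varepsilon_k$, hence lies in $\Cp^\infty$; but more importantly, since $\PPtilde^{(k)}$ is homogeneous on the nose and $\chi$ is too, the truncation $\chi(\varepsilon_k^{-1}t)\PPtilde^{(k)}$ is still essentially homogeneous of weight $m_k$, its cocycle being a smooth density supported in $|t|\leq\varepsilon_k$; summing yields an essentially homogeneous $\PPtilde$ of weight $m_0$ because the cocycle identity \eqref{eq:cocycle_identity} and Lemma \ref{lem:smooth_cocycle} let one add the cocycles.

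To verify the asymptotic property, fix $j$ and split $\PPtilde - \sum_{k<j}\PPtilde^{(k)}$ into the finite sum $\sum_{k<j}(\chi(\varepsilon_k^{-1}t)-1)\PPtilde^{(k)}$, which is a smooth density (hence in $\PPsi_H^{m_j}(M)$ trivially, as smooth densities lie in every order), plus the tail $\sum_{k\geq j}\chi(\varepsilon_k^{-1}t)\PPtilde^{(k)}$. For the tail one invokes Proposition \ref{prop:vanishing}: each term $\chi(\varepsilon_k^{-1}t)\PPtilde^{(k)}$ with $k\geq j$ vanishes to all orders at $t=0$, so in particular writing $m_j-m_0=:\ell$ one sees the tail is of the form $t^{\ell}\QQ$ for some $\QQ\in\sE'_r(\TTHM)$; by the last assertion of Proposition \ref{prop:vanishing} this puts the tail in $\PPsi_H^{m_0-\ell}(M)=\PPsi_H^{m_j}(M)$. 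Transferring back via the exponential cut-off gives $\PP\in\PPsi_H^{m_0}(M)$ with the required property, and $P:=\PP_1\sim\sum_k P_k$.

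The main obstacle is the choice of $(\varepsilon_k)$ and the proof that the series converges in $C^\infty$. One must ensure simultaneously that (i) the spatial series $\sum_k \chi(\varepsilon_k^{-1}t)\PPtilde^{(k)}$ and all of its derivatives in $(x,\xi,t)$ converge uniformly on compacta, which requires quantitative bounds on $\|\partial^\bfa(\chi(\varepsilon_k^{-1}t)\PPtilde^{(k)})\|$ in terms of $\varepsilon_k$ — here the factors $\varepsilon_k^{-|\mathbf{a}|}$ coming from differentiating $\chi(\varepsilon_k^{-1}t)$ are dangerous, but they are beaten by the decay gained from the vanishing of $\PPtilde^{(k)}$ at $t=0$ to the appropriate order, exactly as in the classical Borel lemma; and (ii) the resulting $\PPtilde$ is still genuinely a distribution of the right regularity class near $t=0$, which follows because the construction is the standard one and the $\varepsilon_k$ are chosen after fixing the representatives. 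Once this bookkeeping is carried out, essential homogeneity of the sum and the asymptotic identities are formal consequences of the cocycle calculus developed in Section \ref{sec:cosymbols}.
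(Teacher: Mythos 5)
Your overall skeleton (lift each $P_k$, modify each term by something negligible so the series converges, then check the tail conditions) is the right shape, but the truncation you chose is in the wrong direction and this is fatal. The cutoff $\chi(\varepsilon_k^{-1}t)$ equals $1$ at $t=1$ as soon as $\varepsilon_k\leq 1$, so it does not modify $\PPtilde^{(k)}$ at $t=1$ at all: the restriction of your candidate sum to $t=1$ is literally $\sum_k P_k$, which has no reason to converge in $\sE'_r(M\times M)$ (take $P_k=c_kS$ for a fixed properly supported smoothing kernel $S$ and $c_k\to\infty$ arbitrarily fast; this is a legitimate asymptotic series since $S\in\Psi_H^{m}(M)$ for every $m$, yet $\sum_k c_kS$ diverges and your construction leaves it untouched at $t=1$). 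The essential content of a Borel-type summation is that each term must be corrected by a smoothing operator \emph{depending on that term}; a cutoff in the $t$-variable near $t=0$ produces no such correction. Two subsidiary claims also fail. First, $(\chi(\varepsilon_k^{-1}t)-1)\PPtilde^{(k)}$ is generally \emph{not} a smooth density: by Proposition \ref{prop:smooth_off_the_units} the singular support of $\PPtilde^{(k)}$ is the whole unit space $\{\xi=0\}$, so this difference remains singular along the portion of the unit space with $0<|t|<c\,\varepsilon_k$. Second, a sum of terms of distinct weights $m_k$ is not essentially homogeneous of weight $m_0$ just by "adding cocycles": $\zoom_{\lambda*}\PP^{(k)}-\lambda^{m_0}\PP^{(k)}$ contains the non-smooth term $(\lambda^{m_k}-\lambda^{m_0})\PP^{(k)}$, so each term must first be re-extended to a common weight.

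The paper's proof repairs all three points simultaneously, and the repair shows where the truncation must live. One reduces to $m_k=-d_H-kN-1$, extends $P_k$ to $\QQ_k\in\PPsi_H^{-d_H-kN-1}(M)$, and replaces it by $\PP_k:=t^{kN}\QQ_k$, which still restricts to $P_k$ at $t=1$ but now lies in $\PPsi_H^{-d_H-1}(M)$: all terms acquire a common weight (this is the content of Proposition \ref{prop:vanishing}, used in the direction you did not exploit). By Theorem \ref{thm:k-regularity} these kernels are of class $C^k$, so after subtracting a properly supported smooth density one may assume $\PP_k$ vanishes to order $k$ on the unit space $\TTHM^{(0)}$. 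The cutoffs are then taken equal to $1$ near $\TTHM^{(0)}$ with supports shrinking \emph{toward the unit space}, i.e., in the $\xi$-direction transverse to the units (equivalently, at low frequency on the Fourier side): this modifies each term, including its value at $t=1$, by a properly supported smooth density (pseudolocality, Proposition \ref{prop:smooth_off_the_units}, is what justifies this, not support considerations in $t$), and $\sum_k\PP_k$ then converges in $C^\infty$ because the $k$-th term vanishes to order $k$ exactly where its support concentrates. Your use of Proposition \ref{prop:homogeneous_on_the_nose} to reduce the convergence check to $t\in[-1,1]$ is the one place where control in the $t$-direction genuinely enters, and the paper uses it for exactly that purpose.
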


\begin{proof}
The proof follows a standard strategy, so we shall be brief.
It suffices to consider asymptotic series with $P_k \in \Psi^{-d_H-kN-1}_H(M)$.
Extend each $P_k$ to $\QQ_k\in \PPsi^{-d_H-kN-1}(\TTHM;\Omega_r)$,
and then let  $\PP_k = t^{kN}\QQ_k \in \PPsi^{-d_H-1}_H(M)$.
We will also assume that $\PP_k$ vanishes to order $k$ on $\TTHM^{(0)}$,
which can always be achieved after adding an element of $C_p^\infty(\TTHM;\Omega_r)$, thanks to Theorem \ref{thm:k-regularity}.

Using cut off functions that are equal to $1$ in a neighbourhood of $\TTHM^{(0)}$,
but have shrinking supports as $k$ increases, we can modify the sequence $\PP_k$ such that
$\sum_{k=0}^\infty \PP_k$ converges to some $\PP\in \PPsi^{-d_H-1}_H(M)$.
When checking convergence, it is useful to know that we may concentrate on values $t\in [-1,1]$ because of  Proposition \ref{prop:homogeneous_on_the_nose}.
\end{proof}

\begin{theorem}
 \label{thm:parametrices}
 Every $H$-elliptic pseudodifferential operator $P\in\Psi^m_H(M)$ admits a parametrix.
\end{theorem}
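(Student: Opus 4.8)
The plan is to build a parametrix by the classical Neumann-series-plus-asymptotic-summation argument, using the tools already assembled. First I would invoke Lemma \ref{lem:parametrix-1}: since $P\in\Psi^m_H(M)$ is $H$-elliptic, there is $Q_0\in\Psi^{-m}_H(M)$ with $R:=I-Q_0*P\in\Psi^{-1}_H(M)$ and $R':=I-P*Q_0\in\Psi^{-1}_H(M)$. (The filtered algebra structure of Theorem \ref{thm:algebra} guarantees all these products stay in the calculus.)

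Next I would correct $Q_0$ to a genuine left parametrix by summing a Neumann series. Since $R\in\Psi^{-1}_H(M)$, the powers $R^{*k}\in\Psi^{-k}_H(M)$ have orders tending to $-\infty$, so $\sum_{k\geq0}R^{*k}$ is an asymptotic series in the sense of Definition \ref{def:asymptotic}. By Theorem \ref{thm:completeness} it admits an asymptotic limit $S\in\Psi^0_H(M)$. Setting $Q_L:=S*Q_0\in\Psi^{-m}_H(M)$, a telescoping computation gives $Q_L*P = S*(I-R) = S - S*R$. Because $S - S*R - I = \sum_{k=0}^{j-1}R^{*k} - R*\sum_{k=0}^{j-1}R^{*k} - I + (\text{terms in }\Psi^{-j}_H(M)) = -R^{*j} + (\text{terms in }\Psi^{-j}_H(M))\in\Psi^{-j}_H(M)$ for every $j$, we conclude $Q_L*P - I\in\bigcap_j\Psi^{-j}_H(M) = \Psi^{-\infty}(M)$ by Corollary \ref{cor:smoothing}. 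Symmetrically, summing $\sum_k R'^{*k}$ and forming $Q_R:=Q_0*S'$ yields a right parametrix with $P*Q_R - I\in\Psi^{-\infty}(M)$.

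Finally I would reconcile the one-sided parametrices in the usual way: from $Q_L*P - I\in\Psi^{-\infty}$ and $P*Q_R-I\in\Psi^{-\infty}$ one computes $Q_L - Q_R = Q_L*(P*Q_R) - (Q_L*P)*Q_R + (\text{smoothing}) \in\Psi^{-\infty}(M)$, using associativity of convolution and that $\Psi^{-\infty}(M)=\Cp^\infty(M\times M;\Omega_r)$ is a two-sided ideal (Corollary \ref{cor:smoothing} together with Proposition \ref{prop:regular}). Hence $Q:=Q_L$ is a genuine parametrix: $P*Q-I = P*Q_L - I \equiv P*Q_R - I \equiv 0$ modulo $\Psi^{-\infty}(M)$, and $Q*P-I\in\Psi^{-\infty}(M)$ directly. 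I do not expect any real obstacle here — every ingredient (filtered algebra, surjective cosymbol map, completeness under asymptotic expansion, identification of the smoothing ideal) is already in place; the only point requiring a little care is bookkeeping the orders in the Neumann series so that Theorem \ref{thm:completeness} applies and the remainder lands in $\bigcap_m\Psi^m_H(M)$.
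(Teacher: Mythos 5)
Your proposal is correct and follows essentially the same route as the paper: apply Lemma \ref{lem:parametrix-1}, sum the Neumann series $\sum_k R^{*k}$ asymptotically via Theorem \ref{thm:completeness}, and compose with $Q_0$ to get the parametrix. The paper's version is terser (it only writes down the right-handed correction $Q'=Q*A$ with $R=I-P*Q$ and leaves the two-sided reconciliation implicit), whereas you spell out the left/right parametrices and their agreement modulo $\Psi^{-\infty}(M)$, which is a standard and harmless elaboration.
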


\begin{proof}
Pick $Q \in\Psi^{-m}_H(M)$ as in Lemma \ref{lem:parametrix-1}.
Put $R:=I-P*Q$ and let $R^k$ denote the $k$th convolution power of $R$.
By Theorem \ref{thm:completeness} the series $\sum_{k=0}^\infty R^{k}$ admits an asymptotic limit $A \in \Psi^0_H(M)$.
Then $Q' = Q*A$ is a parametrix for $P$.
\end{proof}

\begin{corollary}
 \label{cor:hypoelliptic}
 Every $H$-elliptic differential operator is hypoelliptic. \qed
\end{corollary}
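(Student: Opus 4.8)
The plan is to obtain hypoellipticity as a purely formal consequence of the two main results already in hand: the existence of a parametrix (Theorem \ref{thm:parametrices}) and the pseudolocality of the calculus (Proposition \ref{prop:smooth_off_the_units}). First I would note that a differential operator lies in $\Psi^\bullet_H(M)$ (Example \ref{ex:diff_op} and Section \ref{sec:diff_ops}), so $H$-ellipticity in the sense of Definition \ref{def:H-elliptic} is a meaningful hypothesis and Theorem \ref{thm:parametrices} produces $Q\in\Psi^{-m}_H(M)$ with $Q*P-I\in\Psi^{-\infty}(M)$. By Corollary \ref{cor:smoothing} the error term $S:=Q*P-I$ is a properly supported smoothing operator, so in particular $S$ maps $\sD'(M)$ into $C^\infty(M)$.

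Next, for any $u\in\sD'(M)$ I would use the identity $u = Q(Pu) - Su$, which is legitimate because $P$ and $Q$ are properly supported operators on $\sD'(M)$ (Proposition \ref{prop:regular}). Since $P$ is a differential operator it is local, giving the trivial inclusion $\singsupp(Pu)\subseteq\singsupp(u)$; the content is the reverse inclusion. Here $Su\in C^\infty(M)$, and $Q$ is pseudolocal: by Proposition \ref{prop:smooth_off_the_units} its Schwartz kernel is smooth off the diagonal, and a properly supported operator with this property satisfies $\singsupp(Qf)\subseteq\singsupp(f)$ for every $f\in\sD'(M)$ (the usual argument: cut $f$ near a point of smoothness, apply $Q:C^\infty(M)\to C^\infty(M)$ to the smooth piece and the smooth off-diagonal kernel to the remaining piece). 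Applying this with $f=Pu$ yields $\singsupp(u)\subseteq\singsupp(Q(Pu))\cup\singsupp(Su)\subseteq\singsupp(Pu)$, hence $\singsupp(u)=\singsupp(Pu)$, which is hypoellipticity.

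I do not expect any real obstacle: all the analysis has already been done in the preceding sections, and what remains is bookkeeping. The only step deserving a word of care is the implication ``kernel smooth off the diagonal $\Rightarrow$ pseudolocality of singular supports,'' which is the standard cut-off estimate and could be stated as a short lemma or simply invoked. (Indeed the statement is marked \qed precisely because, given Theorem \ref{thm:parametrices} and Proposition \ref{prop:smooth_off_the_units}, it is immediate.)
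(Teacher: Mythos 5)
Your argument is correct and is exactly the standard deduction the paper intends by leaving this corollary with a bare \qed: parametrix from Theorem \ref{thm:parametrices}, smoothing error via Corollary \ref{cor:smoothing}, and pseudolocality of $Q$ from Proposition \ref{prop:smooth_off_the_units} combined with proper supports (Proposition \ref{prop:regular}). Nothing is missing.
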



\section{Differential Operators}
\label{sec:diff_ops}

In this section we spell out the details for  differential operators.  To avoid technicalities, we shall assume that $M$ is compact%
\footnote{The specific technicality we are avoiding is that the order of a differential operator on a noncompact manifold may increase to infinity as we stray outward.}.

The symbolic calculus of differential operators on filtered manifolds
is most naturally understood with reference to the {\em Lie algebroid} of $\TTHM$.
As is evident in \cite{VanYun:groupoid}, it is easier to construct this Lie algebroid
than it is to construct the groupoid.
In particular,  local exponential coordinates are not necessary
to understand the calculus of differential operators.

The reader unfamiliar with Lie algebroids might consult \cite{Mackenzie} or \cite{MoeMrc}.  
Briefly, if $G$ is a Lie groupoid, then the associated Lie algebroid is the vector bundle $\AG = \mathrm{ker}\,ds|_{G^{(0)}}$ restricted to the space of units $G^{(0)}$,
equipped with a {\em bracket} and an {\em anchor}.
Sections $\Gamma^\infty(\AG)$ are in one-to-one correspondence with left-invariant vector fields on $G$ that are in the kernel of $ds$.
The bracket of left-invariant vector fields on $G$ induces a bracket operation on $\Gamma^\infty(\AG)$.
We shall not make use of the anchor.


\subsection{The universal enveloping algebra of a Lie algebroid}
\label{sec:enveloping_algebra}

Let $G$ be a Lie group.  It is well known that there are algebra isomorphisms between (\emph{a}) the set of left-invariant differential operators on $G$, (\emph{b}) the universal enveloping algebra $\sU(\lie{g})$, and (\emph{c}) the set of distributions on $G$ supported at the identity (an algebra under convolution).  All of these have analogues for Lie groupoids.  

The analogue of (\emph{a}) is the algebra $\DO_s(G)^G$ of right-invariant differential operators on $G$ that are tangent to the $s$-fibres%
\footnote{
The algebra $\DO_s(G)^G$ is denoted by $\mathrm{Diff}(G)$ in \cite{NisWeiXu}.
}.
For (\emph{b}), one takes the universal enveloping algebra $\sU(\AG)$ of the Lie algebroid $\AG$.  For a definition, see e.g.
\cite[Section 3]{NisWeiXu}.
For (\emph{c}) we make the following definition.

\begin{definition}
We write $\sE'_r(G)^{(0)}$ for the set of $r$-fibred distributions on $G$ that are supported on the units space $G^{(0)}$.
\end{definition}

In the following theorem, most of which appears in \cite{NisWeiXu}, we use $T_sG := \ker ds \subseteq TG$ to denote the bundle of vectors tangent to the $s$-fibres of $G$.  We will also need the transpose maps $t:\sE'_s(G) \leftrightarrows \sE'_r(G)$ defined by 
\[
 P \mapsto P^t := P \circ \iota^*
\]
where $\iota:G \to G$ is the groupoid inverse.  Note that transpose is an anti-isomorphism of the convolution algebras.

\begin{theorem}
\label{thm:enveloping_algebra}
 There are canonical algebra isomorphisms
 \begin{equation}
 \label{eq:UAG-isomorphisms}
  \sU(\AG) \cong \DO_s(G)^G \cong \sE'_r(G)^{(0)}.
 \end{equation}
 Specifically, the left-hand isomorphism is the unique extension of the map $\Gamma^\infty(\AG) \to \Gamma^\infty(T_sM)$ induced by right-translation:
 \[
  X \mapsto \tilde{X}, \qquad \text{where } \tilde{X}_\gamma := R_{\gamma*}X_{r(\gamma)}.
 \]
 The right-hand isomorphism is given by
 \begin{equation}
 \label{eq:IoD}
  D \mapsto \left[ (I\circ D)^t = I \circ D\circ \iota^*  : C^\infty(G) \to C^\infty(G^{(0)}) \right]
 \end{equation}
 where $I: C^\infty(G) \to C^\infty(G^{(0)})$ is evaluation on the units.  The inverse map is $u\mapsto D_u$ where
 \begin{equation}
  \label{eq:Du}
  D_u\varphi(\gamma) := \ang{ u^t_{r(\gamma)} , \varphi(\slot\gamma) }.
 \end{equation}
\end{theorem}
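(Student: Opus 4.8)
The plan is to establish the two isomorphisms in \eqref{eq:UAG-isomorphisms} one at a time.  The left-hand isomorphism $\sU(\AG)\cong\DO_s(G)^G$ is essentially the content of \cite{NisWeiXu}, and I would recall its proof: the assignment $X\mapsto\tilde X$, $\tilde X_\gamma:=R_{\gamma*}X_{r(\gamma)}$, sends a section of $\AG=\ker ds|_{G^{(0)}}$ to a vector field on $G$ that is tangent to the $s$-fibres and right-invariant, and — checking one $s$-fibre at a time, exactly as for Lie groups — it is a Lie-algebra homomorphism $\Gamma^\infty(\AG)\to\Gamma^\infty(T_sG)$.  By the universal property of the enveloping algebra it extends uniquely to an algebra homomorphism $\sU(\AG)\to\DO_s(G)^G$.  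Bijectivity is a local matter: over a chart $U\subseteq G^{(0)}$ on which $\AG$ is trivialized by a frame $X_1,\dots,X_k$, a local splitting of the submersion $s$ identifies a neighbourhood of $U$ in $G$ with $U\times F$ for a model $s$-fibre $F$, under which $\DO_s(G)^G$ becomes the $C^\infty(U)$-span of the ordered monomials in the $\tilde X_i$; a Poincar\'e--Birkhoff--Witt argument (as in \cite{NisWeiXu}) shows these monomials form a free module basis, giving injectivity and surjectivity at once, and a partition of unity on $G^{(0)}$ finishes.

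For the right-hand isomorphism I would set $\Phi(D):=(I\circ D)^t=I\circ D\circ\iota^*$ as in \eqref{eq:IoD}.  Since $\iota^*$ intertwines the $r$- and $s$-pullback module structures on $C^\infty(G)$, since $D$ is $C^\infty(G^{(0)})$-linear for the $s$-structure (being tangent to the $s$-fibres), and since $I$ is $C^\infty(G^{(0)})$-linear, the composite $\Phi(D)$ is $C^\infty(G^{(0)})$-linear for the $r$-structure, so $\Phi(D)\in\sE'_r(G)$; and it is supported on $G^{(0)}$ because $D$ is local, so a function vanishing near $G^{(0)}$ is carried by $D\circ\iota^*$ to one vanishing near $G^{(0)}$, which $I$ annihilates.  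For the inverse I would check directly that \eqref{eq:Du}, $D_u\varphi(\gamma):=\ang{u^t_{r(\gamma)},\varphi(\slot\gamma)}$, defines a right-invariant operator tangent to the $s$-fibres: right-invariance is immediate from $\varphi(\eta(\gamma\delta))=\varphi((\eta\gamma)\delta)$, while tangency and the differential-operator property follow because $u\in\sE'_r(G)^{(0)}$ restricts on each $r$-fibre to a distribution supported at a single point, hence of finite order, with coefficients depending smoothly on the base point.  A short computation using $\iota|_{G^{(0)}}=\id$ and $x\eta=\eta$ for $\eta\in G^x$ then gives $\Phi(D_u)=u$ and $D_{\Phi(D)}=D$, so the two maps are mutually inverse.

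It remains to see that $\Phi$ is multiplicative.  From \eqref{eq:Du} one computes $D_u(D_v\varphi)(\gamma)=\ang{u^t_{r(\gamma)}(\beta),\ang{v^t_{r(\beta)}(\eta),\varphi(\eta\beta\gamma)}}$; comparing with the convolution formula for $s$-fibred distributions from Section \ref{sec:convolution} and using that transpose is an anti-isomorphism, this equals $D_{u*v}\varphi(\gamma)$.  Hence $D_{u*v}=D_u\circ D_v$, equivalently $\Phi(D\circ D')=\Phi(D)*\Phi(D')$ — the transpose in the definition of $\Phi$ being exactly what converts the anti-homomorphism arising from right-invariance into an honest homomorphism.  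Composing the two isomorphisms and evaluating on generators $X\in\Gamma^\infty(\AG)$ recovers the explicit map $X\mapsto(I\circ\tilde X)^t$, confirming that the composite $\sU(\AG)\cong\sE'_r(G)^{(0)}$ is the expected one.

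I expect the fussiest point to be the variance bookkeeping in this last computation — keeping straight how right-invariance, the groupoid inverse $\iota$, and the order of convolution interact — while the conceptual heart, the Poincar\'e--Birkhoff--Witt statement for $\sU(\AG)\to\DO_s(G)^G$, can be quoted wholesale from \cite{NisWeiXu}.
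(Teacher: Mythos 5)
Your proposal is correct and follows essentially the same route as the paper, which simply cites \cite{NisWeiXu} for the left-hand isomorphism and, for the right-hand one, invokes the standard structure of distributions supported at a point before leaving the ``direct calculations'' of mutual inverseness and multiplicativity to the reader. You have supplied exactly those omitted details — the PBW/local-triviality argument for $\sU(\AG)\cong\DO_s(G)^G$ and the convolution computation showing $D_{u*v}=D_u\circ D_v$ — and they check out against the convolution formulas of Section \ref{sec:convolution}.
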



%
%
%

\begin{proof}
For the left-hand isomorphism in Equation \eqref{eq:UAG-isomorphisms}, see \cite{NisWeiXu}, Section 3.  For the right-hand isomorphism, note that standard results about distributions supported at a point imply that a smooth family $u=(u_x)_{x\in G^{(0)}} \in \sE'_r(G)^{(0)}$ is necessarily given by evaluation at the units of some smooth differential operator $P\in\DO_s(G)$.  From here, direct calculations show that the maps \eqref{eq:IoD} and \eqref{eq:Du} are mutually inverse algebra morphisms.  We leave the details to the reader.
\end{proof}


\subsection{Principal symbols}
\label{sec:diff_op_symbols}

In the case of the pair groupoid, there is also an algebra isomorphism
\[
 \DO_s(M\times M)^{M\times M} \cong  \DO(M), 
\]
where $\DO(M)$ designates the algebra of differential operators on $M$.  Specifically, a differential operator $D\in\DO(M)$ is mapped to the differential operator $\tilde{D}\in\DO_r(M\times M)$ that acts as $D$ on the first factor.  
The resulting isomorphism $\DO(M) \cong \sE'_r(M\times M)^{(0)}$ sends a differential operator to its Schwartz kernel. 

If $M$ is a filtered manifold, then we have a filtration on the sections of the Lie algebroid $\Gamma^\infty(TM)$ that is compatible with the Lie bracket.  This extends to a filtration on the universal enveloping algebra $\sU(TM)\cong \DO(M)$.  We refer to this as the \emph{$H$-order} of a differential operator.  In the case of a trivially filtered manifold, this is the usual order.  We denote the space of differential operators of $H$-order $\leq m$ by $\DO_H^m(M)$.

Since the associated graded Lie algebra of $\Gamma^\infty(TM)$ is $\Gamma^\infty(\tHM)$, the grading maps $ \sigma_m:\Gamma^\infty(H^m) \to \Gamma^\infty(\tHM)$
extend to a family of algebra grading maps 
\[
 \sigma_m : \DO_H^m(M) \to \sU(\tHM).
\]
Concretely, if $D = X_1\ldots X_k$ is a monomial of vector fields where $X_i\in\Gamma^\infty(H^{m_i})$, then  $\sigma_m(D) = \sigma_{m_1}(X_1)\ldots\sigma_{m_k}(X_k)$.  This defines the {\em principal part} of a differential operator of $H$-order $m$.


\subsection{Differential operators as $H$-pseudodifferential operators}
\label{sec:diff_ops_homogeneous}

Let $D$ be a differential operator on $M$ of $H$-order $m$ with Schwartz kernel $P \in \sE'_r(M\times M)^{(0)}$.  Its principal part $\sigma_m(D) \in \sU(\tHM)$ can be identified, via Theorem \ref{thm:enveloping_algebra}, with an element of $\sE'_r(\THM)^{(0)}$, which we will denote by $\sigma_m(P)$ since we will shortly see it is precisely the principal cosymbol of $P$.

We  associate to $D$ a family $(\PP_t)_{t\in\RR}$ of distributions $\PP_t \in \sE'_r(\TTHM|_t)$ as follows:
\begin{equation}
\label{eq:ssigma}
  \PP_t = \begin{cases}
   t^m P , & t\neq 0 \\
   \sigma_m(P), & t=0.
  \end{cases}
\end{equation}

\begin{proposition}
Let $P\in\sE'_r(M\times M)$ be the Schwartz kernel of a differential operator $D$ of $H$-order $m$.  Then the family $\PP$ above is smooth, \emph{i.e.}, it defines an element of $\sE'_r(\TTHM)^{(0)}$.  Moreover, it is homogeneous of order $m$ on the nose.  In particular, $P\in\Psi^m_H(M)$.
\end{proposition}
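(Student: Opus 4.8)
The plan is to verify smoothness of $\PP$ by working invariantly with the Lie algebroid of $\TTHM$, avoiding local exponential coordinates altogether, in the spirit announced at the start of Section \ref{sec:diff_ops}. The key structural input is that the Lie algebroid $\mathrm{A}(\TTHM)$ of the $H$-tangent groupoid carries, fibrewise over $t$, a one-parameter family of identifications: at $t\neq0$ it is the Lie algebroid $TM$ of the pair groupoid $M\times M$ (with the $H$-order filtration), while at $t=0$ it is the graded bundle $\tHM$, with the dilations $\delta_t$ interpolating between them. More precisely, one expects a canonical isomorphism $\mathrm{A}(\TTHM) \cong (\tHM \times \RR)$ of vector bundles over $M\times\RR$ under which a section $X$ of $H^i \subseteq TM$, viewed via $\sigma_i$ as a section of $\tHM[i]$, corresponds to the section $(x,t) \mapsto t^{-i}\delta_t(\sigma_i(X)(x))$ away from $t=0$ — and this is smooth across $t=0$ precisely because the graded connection controlling the smooth structure of $\TTHM$ is compatible with the grading (Section \ref{sec:global_exponential_coordinates}). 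This is exactly the algebroid-level shadow of the groupoid statement, and it is the step I expect to carry the real weight.

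Granting that, the proof proceeds as follows. First, write $D = \sum_{\ord_H \le m}$ as a polynomial in vector fields $X \in \Gamma^\infty(H^i)$, so that under the isomorphism $\DO(M) \cong \mathcal{U}(TM)$ of Section \ref{sec:diff_op_symbols} the operator $D$ lifts to a right-invariant, $s$-fibrewise differential operator $\DD$ on $\TTHM$ via the algebroid identification above. By Theorem \ref{thm:enveloping_algebra}, the element $(I\circ\DD)^t \in \sE'_r(\TTHM)^{(0)}$ is a smooth $r$-fibred distribution supported on the units. Second, compute its restriction to each slice $\TTHM|_t$: for $t\neq0$ the algebroid identification scales the degree-$i$ generators by $t^{-i}$, but a monomial $X_{j_1}\cdots X_{j_k}$ with $\sum i_{j_\ell} = m$ contributes, and after accounting for the $t^m$ normalization built into \eqref{eq:ssigma} one lands exactly on $t^m P$; for $t=0$ only the top-$H$-order monomials survive the $\delta_t \to 0$ limit, and these give precisely $\sigma_m(P)$ as defined through the grading maps and Theorem \ref{thm:enveloping_algebra}. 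Hence $(I\circ\DD)^t$ coincides with the family $\PP$ of \eqref{eq:ssigma}, proving $\PP \in \sE'_r(\TTHM)^{(0)}$.

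Third, establish homogeneity on the nose. The zoom action $\zoom$ conjugates the algebroid endomorphism: on generators of degree $i$, $\zoom_{\lambda*}$ acts by $\delta_\lambda$ at $t=0$ and by the slice-shift $t\mapsto\lambda^{-1}t$ elsewhere, and under the identification above both descriptions amount to multiplying a degree-$i$ generator by $\lambda^i$. A monomial of total $H$-order $m$ therefore scales by $\lambda^m$, so $\zoom_{\lambda*}\PP = \lambda^m\PP$ exactly, with no smoothing error. This gives $\PP \in \PPsi^m_H(M)$ and, restricting to $t=1$, $P = \PP_1 \in \Psi^m_H(M)$; moreover the $t=0$ restriction exhibits $\sigma_m(P)$ as the principal cosymbol, justifying the notation.

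The main obstacle is the first step: making the algebroid identification $\mathrm{A}(\TTHM) \cong \tHM\times\RR$ precise and checking that the rescaled generators $t^{-i}\delta_t(\sigma_i X)$ extend smoothly to $t=0$. This requires unpacking just enough of the smooth structure from \cite{VanYun:groupoid} — specifically that left-invariant vector fields tangent to the $s$-fibres, when expressed in the exponential coordinates of \eqref{eq:EExp}, have coefficients polynomial in $\xi$ with the grading-compatible connection ensuring no negative powers of $t$ appear. Once that is in hand, everything else is bookkeeping on $\mathcal{U}(\tHM)$ and its grading, which is why I would state the algebroid fact as the crux and treat the degree computation and the homogeneity check as routine consequences.
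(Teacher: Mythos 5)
Your proposal takes essentially the same route as the paper: the paper's proof also works at the level of the Lie algebroid/enveloping algebra, reducing to the generators of $\sU(TM)\cong\DO(M)$ (functions and vector fields), invoking the smooth extension of the rescaled vector fields $t^iX$ across $t=0$ to $\sigma_i(X)$ established in \cite{VanYun:groupoid} --- precisely the algebroid fact you single out as the crux --- and concluding by multiplicativity of $D\mapsto\PP$ together with exact homogeneity on each slice. One small notational slip: as written, $t^{-i}\delta_t(\sigma_i(X))$ is identically $\sigma_i(X)$ because $\delta_t$ acts by $t^i$ on $\tHM[i]$; the intended identification sends $\sigma_i(X)$ to the family that equals $t^iX$ for $t\neq0$ and $\sigma_i(X)$ at $t=0$.
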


\begin{proof}
 If $D$ is order $0$, this is proven in Example \ref{ex:identity}.  
 
 If $D$ is a vector field, then smoothness of $\PP$ follows from Example 10 of \cite{VanYun:groupoid}.  This family is obviously homogeneous on the nose for $t\neq0$, and continuity ensures it is also homogeneous at $t=0$.  
 
 Since functions and vector fields are generating for $\sU(TM)\cong \DO(M)$, the result follows.
\end{proof}

We can now give the proof of Proposition \ref{prop:DOs} from the Introduction, which we generalize here to the filtered case.

\begin{proposition}
 \label{prop:H-DOs}
A semiregular kernel $P \in \sE_r'(M\times M)$ is the Schwartz kernel of a differential operator of $H$-order $\leq m$ if and only if $P = \PP|_{t=1}$ for some $\PP \in \sE_r'(\TTM)$ that is homogeneous on the nose of weight $m$.
\end{proposition}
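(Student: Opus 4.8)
The plan is to treat the two implications separately, the substance lying entirely in the converse. For the forward implication, if $P$ is the Schwartz kernel of a differential operator $D$ of $H$-order $m_0 \le m$, the preceding proposition produces $\QQ \in \sE'_r(\TTM)$, supported on the unit space, with $\QQ_t = t^{m_0}P$ for $t \ne 0$ and $\QQ_0 = \sigma_{m_0}(P)$, which is smooth and homogeneous on the nose of weight $m_0$; then $\PP := t^{m-m_0}\QQ$ is a smooth $r$-fibred distribution on $\TTM$, homogeneous on the nose of weight $m$, with $\PP_1 = P$. So suppose now that $\PP \in \sE'_r(\TTM)$ satisfies $\alpha_{\lambda*}\PP = \lambda^m\PP$ for every $\lambda \in \RR^\times_+$ and that $P = \PP_1$; we must recognize $P$ as the Schwartz kernel of a differential operator of $H$-order $\le m$.

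The first step is to show that $\supp\PP \subseteq \TTHM^{(0)}$. The key point is that homogeneity \emph{on the nose}, unlike the weaker condition in Definition \ref{def:PPsiDOs}, makes $\supp\PP$ genuinely invariant under the zoom action, with no smooth-density slack. Consequently the argument in the proof of Proposition \ref{prop:smooth_off_the_units} applies directly to $\supp\PP$ rather than to the singular support: every point of $\TTM \setminus \TTHM^{(0)}$ lies on a zoom-orbit along which the range map $r$ is not proper, and since $\supp\PP$ is closed, zoom-invariant, and $r$-properly supported, no such point can belong to it. Hence $\supp\PP \subseteq \TTHM^{(0)} = M \times \RR$, and in particular $\supp P \subseteq \Delta_M$. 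Therefore $P$ is an $r$-fibred distribution on the pair groupoid supported on its unit space, i.e.\ $P \in \sE'_r(M\times M)^{(0)}$; since $\mathrm{A}(M\times M) = TM$ and $M$ is compact, the isomorphism $\sE'_r(M\times M)^{(0)} \cong \DO(M)$ of Theorem \ref{thm:enveloping_algebra} identifies $P$ with the Schwartz kernel of a differential operator $D$ on $M$, of some finite $H$-order $m_0$.

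It remains to prove that $m_0 \le m$; the case $D = 0$ is trivial, so assume $D \ne 0$. Applying the already-established forward implication to $D$ yields $\PP' \in \sE'_r(\TTM)$, homogeneous on the nose of weight $m_0$, with $\PP'_t = t^{m_0}P$ for $t \ne 0$ and $\PP'_0 = \sigma_{m_0}(P) \ne 0$, the nonvanishing holding precisely because $m_0$ is the $H$-order of $D$. On $\{t > 0\}$, homogeneity on the nose gives $\PP_t = t^m P$ and $\PP'_t = t^{m_0}P$, whence $\PP = t^{m-m_0}\PP'$ there. Pairing both sides with a fixed test function, evaluated at a base point of $M$ at which $\PP'_0$ does not vanish, the left-hand side extends to a smooth function of $t \in \RR$ because $\PP \in \sE'_r(\TTM)$, while the right-hand side equals $t^{m-m_0}$ times a function that is smooth and nonvanishing at $t = 0$; this is possible only when $m - m_0$ is a nonnegative integer, so $m_0 \le m$. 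I expect the two points needing care to be the upgrade of Proposition \ref{prop:smooth_off_the_units} from the singular support to the full support in the first step of the converse, and the smoothness bookkeeping at $t = 0$ in the final step; the remainder is an assembly of results already in hand.
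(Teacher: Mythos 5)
Your proof is correct and follows essentially the same route as the paper: zoom-invariance of $\supp\PP$ (genuine, not just of the singular support) forces $\supp\PP\subseteq\TTHM^{(0)}$, so $P$ is the kernel of a differential operator, and the order bound comes from smoothness of the family at $t=0$. The only cosmetic difference is in the last step: the paper runs a downward induction showing $\sigma_n(P)=0$ for every $n>m$, whereas you compare $\PP$ directly with the weight-$m_0$ extension at the exact order $m_0$; both hinge on the same fact, namely that the grading map $\sigma_n$ on $\DO_H^n(M)$ has kernel $\DO_H^{n-1}(M)$.
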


\begin{proof}
It only remains to prove the reverse implication.  If $\PP\in \sE_s'(\TTHM)$ is homogeneous on the nose of weight $m$, then $\supp(\PP)$ is invariant under the zoom action. Since the only proper orbits (in the sense of Definition \ref{def:proper_subset}) of the zoom action are those contained in the unit space, $\supp(\PP) \subseteq \TTHM^{(0)}$. Thus $P = \PP_1$ is the kernel of a differential operator $D$ on $M$. 

Suppose $D$ has $H$-order $\leq n$. Then the family $t^n P$ on $\TTHM|_{\RR^\times}$ extends smoothly to $\sigma_n(P)$ at $t=0$.  But $t^nP = t^{n-m}\PP$ for $t\neq0$, so by continuity, $\sigma_n(P) = 0$.  Thus $D$ has $H$-order $\leq n-1$.  Repeating this, $D$ has $H$-order $\leq m$.
\end{proof}


\section{Comparison with the classical calculus}
\label{sec:classical_PsiDOs}

In this section, we prove that our $H$-pseudodifferential operators coincide with the classical pseudodifferential operators in the case of a trivially filtered manifold $M$ (Theorem \ref{thm:PsiDOs} of the Introduction).  

For the terminology of classical pseudodifferential operators, we follow \cite[\S18.1]{Hormander:III}. 
By restricting to local exponential coordinates, we may assume that $M$ is an open domain in $\RR^n$.  We equip $TM = M \times \RR^n$ with the standard connection $\nabla$. 

To begin with, suppose $P$ is (the Schwartz kernel of) a classical pseudodifferential operator of order $m$ on $M$.    
As remarked in Example \ref{ex:full_symbol}, its symbol (in the usual sense, \cite[\S18.1]{Hormander:III}) is given by
\[
  a= \hat{P} \in C^\infty(M \times \hat{\RR}^n).
\]
By the definition of {\em classical} pseudodifferential operator, $a$ admits an asymptotic symbol expansion
\begin{equation}
 \label{eq:polyhomogeneous}
a(x,\eta)\sim\sum_j a_j(x,\eta),
\end{equation}
where $a_j$ is homogeneous of order $m-j$ in $\eta$ outside of $\|\eta\|\leq1$ (see Definition 18.1.5 of \cite{Hormander:III}).  

The inverse Fourier transforms $\check{a}_j = \sF^{-1}_r a_j \in \sS'_r(TM)$ satisfy
\[
  \delta_{\lambda*} \check{a}_j - \lambda^{m-j} \check{a}_j \in \sS_r(TM;\Omega_r), 
  \qquad \text{for all } \lambda\in\RR^\times
\]
and have singular support on the zero section $M\times\{0\}$ (see e.g.\ \cite{Taylor:microlocal}).   Let us define the constant family of distributions
\[
 \PPtilde_j(x,\xi,t) = \varphi_1(x,\xi) \check{a}_j(x,\xi),
\]
where $\varphi_1 \in \Cp(TM)$ is some bump function supported in a domain of injectivity for $\Exp^\nabla$.  Since $\PPtilde_j$ is essentially homogeneous with respect to the zoom action $\tilde\zoom$ on $TM\times\RR$, its push-forward $\PP_j$ via exponential coordinates belongs to $\PPsi^{m-j}_H(M)$.  The series $\sum_j \PP_j|_{t=1}$ admits an asymptotic limit $Q\in\Psi^m_H(M)$ in the sense of Definition \ref{def:asymptotic}.  

By the regularity of $H$-pseudodifferential kernels (Theorem \ref{thm:k-regularity}) and the analogous result for classical pseudodifferential operators, we see that for $k\gg 0$ both $P$ and $Q$ differ from $\Exp^\nabla_* (\sum_{j=1}^k\check{a}_j)$ by a kernel in $C^{k-m-n}(M\times M;\Omega_r)$.  Thus $P-Q  \in \Psi^{\infty}(M)$.  This shows that $P$ is an $H$-pseudodifferential operator.

Conversely, suppose $P\in \Psi^m_H(M)$ is an $H$-pseudodifferential operator.  By Corollary \ref{cor:decay_conditions},
$a:=\hat{P}$ is in the symbol class $S^m(M\times \hat{\RR}^n)$.  It remains to prove that $a$ admits a polyhomogeneous expansion $a\sim\sum_ja_j$.

Choose $\PP\in\PPsi^m_H(M)$ with $P=\PP_1$.  Put $B_0 = \hat{\PP}$.  Note that $B_0|_{t=0}$ is homogeneous of weight $m$ modulo $\sS_{r'}(T^*M)$ for the dilations $\delta'_\lambda$ on $T^*M$.  It is therefore equal, modulo $\sS_{r'}(T^*M)$, to a function $a_0\in C^\infty(T^*M)$ that is exactly homogeneous of weight $m$ outside of $\|\eta\| \leq 1$ (see \cite[Proposition 2.2]{Taylor:microlocal}).  If we define $A_0\in C^\infty(T^*M\times \RR)$ by $A_0|_t  = a_0$ for all $t$, then
\[
\beta_\lambda^* A_0 - \lambda^{m}A_0 \in \sS_{r'}(t^*M\times\RR) \qquad \text{for all } \lambda\in\RR^\times_+.
\]

It follows that $B_1 := t^{-1}(B_0-A_0)$ extends smoothly at $t=0$ to a function on $T^*M\times\RR$ satisfying
\[
  \beta_\lambda^* B_1 - \lambda^{m-1}B_1 \in \sS_{r'}(t^*M\times\RR) \qquad \text{for all } \lambda\in\RR^\times_+.
\]
Note that $B_1|_{t=1} = a-a_0$.  Reasoning as for Corollary \ref{cor:decay_conditions}, we get $a-a_0 \in S^{m-1}(M\times\hat\RR^n)$.  Also, $B_1|_{t=0}$ is equal, modulo $\sS_{r'}(T^*M)$, to a function $a_1\in C^\infty(T^*M)$ that is exactly homogeneous of weight $m-1$ outside of $\|\eta\| \leq 1$.

Repeating, we define $A_1\in C^\infty(T^*M\times\RR)$ by $A_1|t = a_1$ for all $t$ and then $B_2 := t^{-1}(B_1-A_1)$.  Restricting to $t=1$ gives $a-a_0-a_1 \in S^{m-2}(M\times\hat\RR^n)$.  Continuing in this fashion, we obtain an asymptotic symbol expansion $a \sim \sum_{j=0}^m a_j$. 

We have now proven the following.

\begin{theorem}
 Let $M$ be a smooth manifold without boundary, equipped with the trivial filtration: $H^1 = TM$.  The algebra $\Psi^\bullet_H(M)$ of $H$-pseudo\-differential operators is precisely the algebra of (Schwartz kernels of) classical pseudodifferential operators on $M$.
\end{theorem}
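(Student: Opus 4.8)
The plan is to reduce the global statement to a local one by a partition of unity, then verify the two containments separately using the symbol-level analysis already set up in Section \ref{sec:classical_PsiDOs}. First I would invoke local exponential coordinates (Definition \ref{def:local_exponential_coordinates}) and Lemma \ref{lem:local_exponential_support} to reduce, on the one side, any classical pseudodifferential kernel, and on the other side any element of $\Psi^\bullet_H(M)$, to a locally finite sum of pieces supported in local exponential coordinate patches, modulo $\Cp^\infty(M\times M;\Omega_r)$; since smoothing operators lie in both classes (they are classical of order $-\infty$, and by Corollary \ref{cor:smoothing} they are $\Psi^{-\infty}_H(M)$), it suffices to match the two classes patch-by-patch. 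In a single patch we may assume $M$ is an open domain in $\RR^n$ with the standard connection and splitting $\psi=\id$, so that the exponential coordinate change is \eqref{eq:Rn_exponential} and, as in Example \ref{ex:full_symbol}, the full symbol $\hat P = \hat\PP_1$ agrees with the Hörmander symbol of the operator with kernel $P$.

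The inclusion ``classical $\subseteq$ $H$-pseudodifferential'' is the content of the first half of Section \ref{sec:classical_PsiDOs}: given a classical operator $P$ of order $m$ with polyhomogeneous symbol expansion $a\sim\sum_j a_j$, the inverse fibrewise Fourier transforms $\check a_j$, cut off near the zero section, give essentially homogeneous families $\PPtilde_j$ on $TM\times\RR$, whose push-forwards $\PP_j$ lie in $\PPsi^{m-j}_H(M)$; by Theorem \ref{thm:completeness} the series $\sum_j\PP_j|_{t=1}$ has an asymptotic limit $Q\in\Psi^m_H(M)$, and Theorem \ref{thm:k-regularity} together with the classical regularity theorem forces $P-Q$ to be smoothing, hence $P\in\Psi^m_H(M)$. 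The reverse inclusion is the second half of that section: starting from $\PP\in\PPsi^m_H(M)$ extending $P$, Corollary \ref{cor:decay_conditions} puts $a=\hat P$ in the symbol class $S^m$, and then an iteration, peeling off at each stage the genuinely homogeneous leading term $a_j$ of the $t=0$ restriction of $B_j$ (using \cite[Proposition 2.2]{Taylor:microlocal}) and setting $B_{j+1}=t^{-1}(B_j-A_j)$, produces a polyhomogeneous expansion $a\sim\sum_j a_j$; thus $P$ is classical of order $m$.

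Assembling these two inclusions over all patches, and observing that the $H$-order filtration matches the classical order filtration term by term (each $\PP_j\in\PPsi^{m-j}_H(M)$ corresponds to the order-$(m-j)$ homogeneous component $a_j$), gives the equality of filtered algebras $\Psi^\bullet_H(M) = \Psi^\bullet_{\mathrm{cl}}(M)$. The algebra structures already agree by Theorem \ref{thm:algebra} and Example \ref{ex:semiregular_Schwartz_kernels}, since convolution in $\sE'_r(M\times M)$ is composition of Schwartz kernels.

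The main obstacle is the bookkeeping in the reverse direction: one must check that the iteratively extracted symbols $a_j$ genuinely satisfy the polyhomogeneity of Definition 18.1.5 of \cite{Hormander:III}, i.e.\ that the remainders $a-\sum_{j<k}a_j$ lie in $S^{m-k}$ uniformly, and that the construction is independent of the choice of exponential coordinates and cut-offs so that the local pieces glue consistently. Both points are handled by the homogeneity-at-infinity description of $\hat\PP$ in Proposition \ref{prop:symbol_homogeneity} and the estimates of Corollary \ref{cor:decay_conditions}; the coordinate-independence is built into the convention, already flagged after Definition \ref{def:PPhat}, that statements about $\hat P$ hold for any exponential chart, but it is worth spelling out that two such charts differ by a diffeomorphism fixing the diagonal to first order, which preserves both symbol classes and leading homogeneous terms.
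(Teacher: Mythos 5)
Your proposal is correct and follows essentially the same route as the paper's own proof in Section~\ref{sec:classical_PsiDOs}: reduction to local exponential coordinates, the forward inclusion via the push-forwards $\PP_j$ of the cut-off inverse Fourier transforms $\check a_j$ together with Theorem~\ref{thm:completeness} and the regularity comparison, and the reverse inclusion via the iterative extraction $B_{j+1}=t^{-1}(B_j-A_j)$ using Corollary~\ref{cor:decay_conditions} and \cite[Proposition 2.2]{Taylor:microlocal}. Your additional remarks on gluing and coordinate independence merely make explicit what the paper leaves implicit after Definition~\ref{def:PPhat}.
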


Likewise, our construction yields the Heisenberg calculus of Beals and Greiner \cite{BeaGre} in the case where the filtration of $M$ is given by a codimension one subbundle $H$ of $TM$:
\[
 H^0= 0 \quad \leq \quad H^1 = H \quad \leq \quad H^2=TM.
\]
This is because operators in their calculus have symbols which are polyhomogeneous for the appropriate dilations on the duals of the osculating groups.  We omit the details.


\section{Pseudodifferential calculi on Lie groupoids}
\label{sec:filtered_groupoid}

In singular geometric situations, such as foliated manifolds, manifolds with boundary or corners, and stratified manifolds, the appropriate algebras of pseudodifferential operators are associated to Lie groupoids other than the pair groupoid.  

A full history of this enormous body of work would take too much space, so with apologies to the many authors involved, here is an extremely rapid summary.
Various classes of pseudodifferential operators associated to singular spaces have been introduced by analysts, beginning with Melrose's seminal work on the $b$-calculus \cite{Melrose:transformation, Melrose:APS, MelPia}.  Inspired by Connes' approach to index theory for foliations \cite{Connes:integration}, these constructions were subsequently reinterpreted in terms of Lie groupoids and put into a general framework \cite{NisWeiXu, Monthubert:groupoids, AmmLauNis, DebLesRoc}.  
Thanks to this reinterpretation, our construction is readily adapted to produce such pseudodifferential calculi.  One can even decorate the resulting calculi with a filtration on the Lie algebroid to obtain pseudodifferential operators of filtered type on singular spaces.  

We caution, however, that this direction is somewhat separate from our main goal, which is to simplify the machinery of the Heisenberg calculus and its filtered analogues.  Note also that the machinery of the $b$-calculus, for instance, is based not just on the construction of pseudodifferential operators, but also on the boundary conditions needed to obtain Fredholm operators in these singular situations.  The methods described in this paper have nothing to say about these boundary conditions.  

Nevertheless, since the generalization is easy and may be useful in future work, we will take the time to explain the modifications necessary for our construction to yield right-invariant $H$-pseudodifferential operators on Lie groupoids.  Proofs will be omitted, since they are entirely analogous to the manifold case.  Also, to conserve space, we will take up the notation and terminology of Section 9 of \cite{VanYun:groupoid}.  This section is therefore not independent of that paper.  
We will state the results for Hausdorff Lie groupoids, although they apply equally well for almost differentiable \cite{NisWeiXu} (or longitudinally smooth \cite{Monthubert:groupoids}) groupoids.

\begin{definition}
 \cite{VanYun:groupoid}
 A Lie groupoid $G\rightrightarrows M$ is called \emph{filtered} if its Lie algebroid $\AG$ is equipped with a filtration by subbundles
 \[
  0 = \mathrm{A}^0 G \leq \mathrm{A}^1 G \leq \cdots \leq \mathrm{A}^N G = \AG
 \]
 such that the module of sections $\Gamma^\infty(\mathrm{A}^\bullet G)$ is a filtered Lie algebra.
\end{definition}

The associated graded bundle of $\AG$ is denoted $\aHG$ and called the \emph{osculating Lie algebroid}.  It is again a smooth bundle of nilpotent Lie algebras, and exponentiates to a smooth bundle of connected, simply connected nilpotent Lie groups called the \emph{osculating groupoid} and denoted $\AHG$.  
Being a graded bundle, $\aHG$ admits a one parameter family of dilations $(\delta_\lambda)_{\lambda\in\RR}$, and these induce a one-parameter family of groupoid endomorphisms of $\AHG$,.

The analogue of the $H$-tangent groupoid in this context is the \emph{$H$-adiabatic groupoid},
\[
  \AAHG := \AHG \times \{0\} \;\sqcup\; G\times\RR^\times,
\]
which is a Lie groupoid with smooth structure as defined in \cite{VanYun:groupoid}.   It admits a \emph{zoom action} $(\alpha_\lambda)_{\lambda\in\RR^\times_+}$
\begin{align}
 \label{eq:adiabatic_zoom}
  \zoom_\lambda (g,t) &= (g,\lambda^{-1}t) && \text{if } g\in G,~t\neq0,\\
  \zoom_\lambda (x,\xi,0) &= (x, \delta_\lambda (\xi) , 0) && \text{if } x\in M,~\xi\in\AHG_x,~t=0. \nonumber
\end{align}
which is a smooth one parameter family of Lie groupoid automorphisms.

\begin{definition}
Fix $m\in\RR$.  We define
\begin{align*}
 \PPsi^m_H(G) := 
  \{ \PP \in \sE'_r(\AAHG) \st \alpha_{\lambda*} \PP - \lambda^m \PP \in \Cp^\infty(&\AAHG;\Omega_r)\\
   & \text{for all }\lambda\in\RR^\times_+ \}.
\end{align*}
An $r$-fibred distribution $P \in \sE'_r(G)$ is called an \emph{$H$-pseudodifferential kernel of order $\leq m$ on $G$} if $P = \PP_1$ for some $\PP\in\PPsi^m_h(G)$.  The set of $H$-pseudodifferential kernels of order $\leq m$ will be denoted $\Psi^m_H(G)$.
\end{definition}

An $H$-pseudodifferential kernel $P\in\Psi_H^m(G)$ acts on $\Cp^\infty(G)$ by left convolution: for $\varphi \in \Cp^\infty(G)$,
\[
  (\Op(P) \varphi) (\gamma) :=  \ang{ u^t_{r(\gamma)} , \varphi(\slot\gamma) };
\]
compare Equation \eqref{eq:Du}.  Then $\Op(P)$ is a right-invariant operator on the $s$-fibres of $G$, called the \emph{$H$-pseudodifferential operator} associated to $P$.  Again, we will often blur the distinction between pseudodifferential operators and their kernels.

\begin{definition}
The space of \emph{principal cosymbols of order $m$ on $G$} is defined as
\begin{multline*}
 \Sigma_H^m(G) := \{ K \in \sE'_r(\AHG) / \Cp^\infty(\AHG;\Omega_r) \st \\
   \delta_{\lambda*} K = \lambda^m K \text{ for all } \lambda\in\RR^\times_+ \}.
\end{multline*}
\end{definition}

We have the following analogue of Corollary \ref{cor:principal_symbol}, Lemma \ref{lem:cosymbols_extend} and Corollary \ref{cor:lower_order}.

\begin{proposition}
  Let $P\in\Psi_H^m(G)$.  Define the \emph{principal cosymbol of $P$} by $\sigma_m(P) := [\PP_0] \in \Sigma_H^m(G)$, where $\PP \in \PPsi^m_H(G)$ with $\PP_1=P$.  Then $\sigma_m(P)$ is  well-defined independent of the choice of $\PP$.  The resulting \emph{cosymbol map} $\sigma_m$ fits into a short exact sequence
  \[
   0 \longrightarrow    \Psi^{m-1}_H(G) \longrightarrow \Psi^m_H(G) \stackrel{\sigma_m}{\longrightarrow} \Sigma^m_H(G) \longrightarrow 0.
  \]
\end{proposition}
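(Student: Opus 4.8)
The plan is to transcribe the chain of results of Section~\ref{sec:cosymbols} into the groupoid setting, using in place of the local exponential coordinates of Section~\ref{sec:charts} the coordinates on $\AAHG$ constructed in Section~9 of \cite{VanYun:groupoid}. These provide global coordinate patches that are open neighbourhoods of $\AHG\times\{0\}$ and of the unit space $\AAHG^{(0)}=M\times\RR$ and, after trivializing the graded osculating Lie algebroid over a chart $M_0\subseteq M$, local coordinate patches of the form $M_0\times V\times\RR$ exactly as in Definition~\ref{def:local_exponential_coordinates}. With them come the cut-off lemmas that are the analogues of Lemma~\ref{lem:exponential_support} and Lemma~\ref{lem:local_exponential_support}, so one may always reduce to an $r$-fibred distribution $\PP\in\PPsi^m_H(G)$ supported in a single local coordinate patch, in which $\PP$ is represented by a coordinate pullback $\widetilde\PP$ and the zoom action \eqref{eq:adiabatic_zoom} takes the form $\tilde\zoom_\lambda(x,\xi,t)=(x,\delta_\lambda\xi,\lambda^{-1}t)$.

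Two of the ingredients carry over with no change of argument. First, for any $\PP\in\PPsi^m_H(G)$ the associated cocycle $\lambda\mapsto\lambda^{-m}\zoom_{\lambda*}\PP-\PP$ is a smooth map from $\RR^\times_+$ into $\Cp^\infty(\AAHG;\Omega_r)$: the proof of Lemma~\ref{lem:smooth_cocycle} uses only the convolution-algebra structure on $\sE'_r$, the cocycle identity, the Arzela--Ascoli and Baire category theorems, and a convolution trick on the multiplicative group $\RR^\times_+$, none of which distinguishes $G$ from the pair groupoid (one reduces to $M$ compact, as there). Second, the purely algebraic facts are unaffected by the change of groupoid: $\zoom_{\lambda*}$ is an algebra automorphism of $\sE'_{r,s}(\AAHG)$ preserving the ideal $\Cp^\infty(\AAHG;\Omega_r)$, and the map $\PP\mapsto t\PP$ sends $\PPsi^{m-1}_H(G)$ into $\PPsi^m_H(G)$, since $\zoom_{\lambda*}(t\PP)=\lambda t\,\zoom_{\lambda*}\PP$ and hence $\zoom_{\lambda*}(t\PP)-\lambda^m(t\PP)=\lambda t(\zoom_{\lambda*}\PP-\lambda^{m-1}\PP)$ is a smooth density.

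The one substantive step, which I expect to be the main obstacle, is the analogue of Lemma~\ref{lem:smooth_cosymbol}: if $\PP\in\PPsi^m_H(G)$ and $\PP_1\in\Cp^\infty(G;\Omega_r)$, then $\PP_0\in\Cp^\infty(\AHG;\Omega_r)$. Here one must genuinely recheck the Arzela--Ascoli estimates in the new coordinates. Working in a local patch $M_0\times V\times\RR$ and writing $\widetilde\PP|_{M_0\times V\times\RR^\times}=p\,\omega$ for the fibrewise Lebesgue density $\omega$, one uses $\tilde\zoom_{\lambda*}\omega=\lambda^{-d_H}\omega$ (with $d_H$ the homogeneous dimension of the fibres of the osculating Lie algebroid) to write the function cocycle as $f_\lambda=\lambda^{-m-d_H}\tilde\zoom_{\lambda^{-1}}^*p-p$; fixing $\lambda=\half$ and iterating the resulting telescoping inequality on the seminorms $\|\partial_x^\bfa\partial_\xi^\bfb p|_{2^{-k}}\|$ over a compact part of the patch shows these are bounded in $k$ whenever $m+d_H+|\bfb|_H>0$, and the Arzela--Ascoli theorem then produces a smooth limit which must coincide with the distributional limit $\PP_0$. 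The only two points needing care are that the coordinate patches of $\AAHG$ enjoy the same compactness properties as those of Section~\ref{sec:charts} and that the zoom action scales the vertical Lebesgue density by exactly $\lambda^{-d_H}$; both are immediate from the construction in \cite{VanYun:groupoid}.

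Granted this lemma, the statement follows formally. For well-definedness: if $\PP,\QQ\in\PPsi^m_H(G)$ both restrict to $P$ at $t=1$, then $\PP-\QQ$ has $(\PP-\QQ)_1=0\in\Cp^\infty(G;\Omega_r)$, hence $(\PP-\QQ)_0\in\Cp^\infty(\AHG;\Omega_r)$, so $[\PP_0]=[\QQ_0]$ in $\Sigma^m_H(G)$ (the analogue of Corollary~\ref{cor:principal_symbol}). For surjectivity of $\sigma_m$: given $K\in\sE'_r(\AHG)$ representing a cosymbol of order $m$, the constant family $\widetilde{\KK}_t:=K$ is essentially homogeneous for the coordinate zoom action, and an exponential cut-off transfers it to $\KK\in\PPsi^m_H(G)$ with $\KK_0=K$, whence $\sigma_m(\KK_1)=[K]$ (the analogue of Lemma~\ref{lem:cosymbols_extend}). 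For exactness at $\Psi^m_H(G)$: the map $\PP\mapsto t\PP$ gives $\Psi^{m-1}_H(G)\subseteq\ker\sigma_m$ since $(t\PP)_0=0$; conversely, if $\sigma_m(P)=0$, choose an extension $\PP$, reduce to a coordinate patch, correct $\PP$ by a suitable smooth density (supported near the units and the osculating groupoid, vanishing at $t=1$) so that $\PP_0=0$ without changing $\PP_1$, note that the coordinate pullback $\widetilde\PP$ then vanishes at $t=0$ and is therefore divisible by $t$, and conclude $\PP=t\QQ$ with $\QQ\in\PPsi^{m-1}_H(G)$ by the cocycle/divisibility argument of Proposition~\ref{prop:vanishing}, so that $P=\QQ_1\in\Psi^{m-1}_H(G)$. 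Finally, exactness at $\Psi^{m-1}_H(G)$ is simply the remark that $\PP\mapsto t\PP$ leaves the restriction at $t=1$ unchanged, so the first arrow of the sequence is the inclusion $\Psi^{m-1}_H(G)\hookrightarrow\Psi^m_H(G)$ of subspaces of $\sE'_r(G)$, which is injective.
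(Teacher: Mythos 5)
Your proposal is correct and follows exactly the route the paper intends: Section \ref{sec:filtered_groupoid} explicitly omits the proof as ``entirely analogous to the manifold case,'' and you carry out precisely that analogy, reducing via the coordinates of \cite{VanYun:groupoid} to the chain Lemma \ref{lem:smooth_cocycle} $\to$ Lemma \ref{lem:smooth_cosymbol} $\to$ Corollary \ref{cor:principal_symbol}, Lemma \ref{lem:cosymbols_extend}, Proposition \ref{prop:vanishing} and Corollary \ref{cor:lower_order}. You also correctly single out the Arzela--Ascoli argument of Lemma \ref{lem:smooth_cosymbol} as the only step requiring genuine re-verification in the new coordinates, and the two points you check there (compactness of the patches and the scaling $\tilde\zoom_{\lambda*}\omega=\lambda^{-d_H}\omega$) are indeed the relevant ones.
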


Using the groupoid exponential of $\AAHG$, Equation (15) of \cite{VanYun:groupoid} provides a diffeomorphism of an open set in $\aaHG \times \RR$ into $\AAHG$ which linearizes the $r$-fibres locally near the unit space.  If we fix a local trivialization $M_0 \times V$ of the graded bundle $\aaHG$, then we obtain local exponential coordinates $(x,\xi,t) \in M_0 \times V \times \RR \supset \VV \into \AAHG$ as in Section \ref{sec:charts}.

Taking the Fourier transform of $\PP\in\Psi^m_H(G)$ in these local exponential coordinates, we obtain exactly the same decay estimates on $\hat{\PP}$ as in Corollary \ref{cor:decay_conditions}.  This immediately implies that the wavefront set of $\PP$ is conormal to the unit space---so that $\PP$ is a proper $r$-fibred distribution---and that the singularities at the units satisfy the same regularity conditions as in Lemma \ref{lem:regularity}.

Therefore, convolution induces a well-defined product $\PPsi^m_H(G) * \PPsi^{m'}_H(G) \to \PPsi^{m+m'}_H(G)$.  This restricts to give products on both $\Psi_H^\bullet(G)$ and the cosymbol algebra $\Sigma_H^\bullet(G)$.  Also, $\Psi^{-\infty}_H(G) := \bigcap_{m\in\ZZ}\Psi^m_H(G) = \Cp^\infty(G;\Omega_r)$ is the smooth convolution algebra of $G$, and every asymptotic series in $\Psi^\bullet_H(G)$ admits an asymptotic limit (\emph{cf}. Theorem \ref{thm:completeness}).  As a  result we have the following theorem.

\begin{theorem}
\label{thm:groupoid_hypoellipticity}
 Let $P\in\Psi^m_H(G)$ be an $H$-pseudodifferential operator on the Lie groupoid $G$.  If its principal symbol $\sigma_m(P)$ admits a convolution inverse in $\Sigma_H^\bullet(G)$, then $P$ admits a parametrix $Q\in \Psi^{-m}_H(G)$ such that $PQ-I$ and $QP-I$ belong to $\Psi^{-\infty}(G) = \Cp^\infty(G;\Omega_r)$.
\end{theorem}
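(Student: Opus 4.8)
The plan is to transport the manifold argument of Theorems \ref{thm:completeness} and \ref{thm:parametrices} verbatim to the $H$-adiabatic groupoid, treating everything at the level of $r$-fibred convolution kernels.

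\emph{Step 1: upgrade the hypothesis on the principal cosymbol.}  Let $u\in\sE'_r(\AHG)$ represent $\sigma_m(P)$ and let $v\in\sE'_r(\AHG)$ represent its convolution inverse in $\Sigma^\bullet_H(G)$.  Repeating the computation of Lemma \ref{lem:inverse_is_homogeneous}, working modulo $\Cp^\infty(\AHG;\Omega_r)$, gives $\delta_{\lambda*}v \equiv (\delta_{\lambda*}v)*u*v \equiv (\delta_{\lambda*}v)*(\lambda^{-m}\delta_{\lambda*}u)*v \equiv \lambda^{-m}v$ for all $\lambda\in\RR^\times_+$, so $v$ is a cosymbol of order $-m$.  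By the groupoid analogue of Lemma \ref{lem:cosymbols_extend} (surjectivity of $\sigma_{-m}$, proved by spreading $v$ to a constant family on $\aaHG\times\RR$ and applying an exponential cut-off), there is $Q_0\in\Psi^{-m}_H(G)$ with $\sigma_{-m}(Q_0)=v$.  Since $\sigma_\bullet$ is multiplicative (same proof as Proposition \ref{prop:symbol_is_algebra_map}), the cosymbols of $P*Q_0-I$ and $Q_0*P-I$ vanish, so by the short exact sequence both differences lie in $\Psi^{-1}_H(G)$.

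\emph{Step 2: Neumann series.}  Put $R:=I-P*Q_0\in\Psi^{-1}_H(G)$; since $\Psi^\bullet_H(G)$ is filtered, $R^k\in\Psi^{-k}_H(G)$, so $\sum_{k\geq0}R^k$ is an asymptotic series.  The groupoid version of Theorem \ref{thm:completeness} — extend each $R^k$ to $\PPsi^{-k}_H(G)$, multiply by $t$-powers and by cut-off functions around $\AAHG^{(0)}$ with shrinking supports, then sum — produces an asymptotic limit $A\in\Psi^0_H(G)$ with $(I-R)*A-I = P*(Q_0*A)-I\in\Psi^{-\infty}_H(G)$.  Thus $Q:=Q_0*A$ is a right parametrix.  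Symmetrically one builds a left parametrix $Q'$, and the standard argument $Q'\equiv Q'*(P*Q)\equiv (Q'*P)*Q\equiv Q$ modulo $\Psi^{-\infty}_H(G)$ (using that $\Psi^{-\infty}_H(G)$ is a two-sided ideal) shows $Q$ is a genuine two-sided parametrix.  Finally $\Psi^{-\infty}_H(G)=\Cp^\infty(G;\Omega_r)$ (the groupoid version of Corollary \ref{cor:smoothing}, recalled just above the theorem), which gives the stated conclusion, since $\Op$ intertwines convolution of kernels with composition of operators.

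\emph{Main obstacle.}  The step carrying the genuine analytic weight is not the formal parametrix construction above but the structural input it relies on: that $\Cp^\infty(\AAHG;\Omega_r)$ is a two-sided ideal in a convolution algebra containing $\PPsi^\bullet_H(G)$, and that asymptotic series on $\AAHG$ converge.  Both rest on the conormality of the wavefront set of an essentially homogeneous $r$-fibred distribution on $\AAHG$ (the analogue of Proposition \ref{prop:conormality}), which in turn follows from the decay estimates of Corollary \ref{cor:decay_conditions} computed in the local exponential coordinates on $\AAHG$ furnished by \cite{VanYun:groupoid}.  Once these facts are transferred — and the excerpt asserts they hold without change — the argument is purely formal, which is why the proof can be safely omitted.
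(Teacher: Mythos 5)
Your proposal is correct and follows essentially the same route as the paper: the paper likewise transfers the decay estimates, conormality, ideal property, and asymptotic completeness to $\AAHG$ via local exponential coordinates, and then declares the parametrix construction of Theorem \ref{thm:parametrices} (cosymbol inversion plus Neumann series) to carry over verbatim, omitting the details. Your identification of the conormality/ideal structure as the only genuinely analytic input is exactly the point the paper makes in the paragraphs preceding the theorem.
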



In the case where the groupoid $G$ is trivially filtered, \emph{i.e.},
\[
  H^0 = 0 \quad \leq \quad  H^1 = \AG,
\]
our pseudodifferential algebra $\Psi_H^m(G)$ coincides with the algebra $\Psi^\infty_\mathrm{prop}(G)$ of properly supported pseudodifferential operators on $G$ from \cite{NisWeiXu}.  This follows by combining the argument of Section \ref{sec:classical_PsiDOs} with the characterization of pseudodifferential kernels in Theorem 7 of \cite{NisWeiXu}.  The latter result only applies to compactly supported distributions on $G$ (which correspond to \emph{uniformly supported pseudodifferential operators} in the terminology of \cite{NisWeiXu}), but one can pass to the properly supported case by using a partition of unity on $G^{(0)}$.

\begin{example}
The above construction can be applied to the $b$-groupoid of a manifold $M$ with boundary $\partial M$. 
 The \emph{$b$-groupoid} ${}^bG \rightrightarrows M$ of Monthubert \cite{Monthubert:groupoids} 
is a Lie groupoid whose Lie algebroid is the $b$-tangent groupoid ${}^bTM$ of Melrose \cite{Melrose:APS}.  The simplest way to define it is via its module of sections:
\[
  \Gamma^\infty({}^bTM)  =\{ X \in \Gamma^\infty(TM) \st X|_{\partial M} \text{ is tangent to the boundary} \}.
 \]
If we apply our machinery to  the adiabatic groupoid
 \[
  \AA{}^bG = ( {}^bG \times \RR^\times ) \;\sqcup\; ({}^bTM \times \{0\}).
 \]
it yields a pseudodifferential calculus $\Psi_H^\bullet({}^bG)$.  
This calculus is exactly the small $b$-calculus $\Psi_b^\bullet(M)$  of Melrose \cite{Melrose:APS}

\end{example}

\begin{remark}
 It is now completely natural to imagine pseudodifferential calculi on filtered manifolds with boundary, or with more general geometries at infinity (\emph{cf.}, \cite{AmmLauNis}, \cite{DebLesRoc}).  We will leave that direction for a future paper.
\end{remark}



\bibliographystyle{alpha}
\bibliography{refs}

\end{document}